\documentclass[11pt,a4paper]{amsart}
\usepackage[utf8]{inputenc}
\usepackage[T1]{fontenc}
\usepackage[english]{babel}

\usepackage{amsfonts,amscd,latexsym}
\usepackage{amsaddr}
\usepackage[dvipsnames]{xcolor}
\usepackage{graphicx}
\usepackage{tikz}
\usetikzlibrary{arrows.meta, calc, intersections, backgrounds, positioning}
\usepackage{subcaption}
\definecolor{darkred}{rgb}{0.6, 0, 0}
\definecolor{violet}{rgb}{128, 0, 128}

\usepackage{pgfplots}
\pgfplotsset{compat=1.18}
\usepgfplotslibrary{groupplots}

\usepackage{url}
\usepackage{verbatim}

\usepackage[margin=0.95in]{geometry}
\usepackage[utf8]{inputenc}
\usepackage[scaled=.97,sups]{XCharter}
\usepackage[scaled=1.04,varqu,varl]{inconsolata}
\usepackage[type1]{cabin}
\usepackage[libertine,bigdelims,vvarbb,scaled=1.05]{newtxmath}
\usepackage[numbers,sort&compress]{natbib}
\usepackage{hyperref}

\theoremstyle{plain}
\newtheorem{theorem}{Theorem}[section]
\newtheorem*{mainNH}{Main Theorem on the NH system}

\newtheorem{lemma}[theorem]{Lemma}
\newtheorem*{triang}{Triangle Lemma}
\newtheorem*{barrier}{Barrier Lemma}
\newtheorem{corollary}[theorem]{Corollary}
\newtheorem{proposition}[theorem]{Proposition}
\newtheorem{conjecture}[theorem]{Conjecture}

\theoremstyle{definition}

\theoremstyle{remark}
\newtheorem{remark}[theorem]{Remark}

\DeclareMathOperator{\R}{\mathbb{R}}

\DeclareMathOperator{\Ph}{\mathbb{P}}
\DeclareMathOperator{\dist}{dist}

\DeclareMathOperator{\Dom}{Dom}

\newcommand{\ue}{\mathrm{e}}

\providecommand{\abs}[1]{\left\lvert#1\right\rvert}

\newcommand\T{\rule{0pt}{2.6ex}}       

\begin{document}

\title[Subthreshold oscillations and spiking in hybrid neuron models 
with a dynamic threshold]{Subthreshold oscillations and spiking 
in hybrid neuron models\\ with a dynamic threshold}

\author[P. Bart{\l}omiejczyk]{Piotr Bart{\l}omiejczyk}
\address{Piotr Bart{\l}omiejczyk,
Faculty of Applied Physics and 
Mathematics and BioTechMed Centre,\\
Gda\'nsk University of Technology,
Gabriela Narutowicza 11/12,\, 80-233 Gda{\'n}sk, Poland}
\email{piobartl@pg.edu.pl}

\author[J. Belmonte-Beitia]{Juan Belmonte-Beitia}
\address{Juan Belmonte-Beitia, Mathematical Oncology Laboratory (MOLAB), Ciudad Real,\linebreak
Laboratorio de Oncología Matemática, Instituto de Investigación Sanitaria de Castilla-La Mancha (IDISCAM), Toledo, 
\linebreak Institute of Applied Mathematics for Science and Engineering (IMACI), Ciudad Real, 
\linebreak Department of Mathematics, 
Escuela Técnica Superior de Ingeniería Industrial 
de Ciudad Real,
\linebreak University of Castilla-La Mancha 
Ciudad Real, 13071 Spain}
\email{juan.belmonte@uclm.es}

\author[J. Signerska-Rynkowska]{Justyna Signerska-Rynkowska}
\address{Justyna Signerska-Rynkowska,
Faculty of Applied Physics and 
Mathematics and BioTechMed Centre,
Gda\'nsk University of Technology, 
Gabriela Narutowicza 11/12,\, 80-233 Gda{\'n}sk, Poland}
\email{justyna.signerska@pg.edu.pl} 

\date{\today}

\subjclass[2020]{Primary: 37N25, 34A38; 
Secondary: 92C20}
\keywords{Hybrid system, Dynamic threshold, Reset mechanism, Neuron model, Periodic orbit, Spike pattern}


\begin{abstract} 
We investigate a class of two-dimensional hybrid non-autonomous
periodically forced neuron models known 
as the \emph{Meng--Huguet--Rinzel} neuron model,
with a \emph{dynamic threshold} and \emph{reset}
mechanism. We focus on the interplay between the continuous
subthreshold dynamics and discrete spike-induced resets. 
The model consists of a linear voltage equation coupled 
with a threshold variable governed by a nonlinear function 
of the membrane potential (voltage), 
and incorporates periodic external 
forcing in the form of either pulse 
or rectified sinusoidal currents. 
We analyze the existence and uniqueness of periodic solutions 
and prove that the non-hybrid version of
the system possesses a unique globally attracting periodic orbit. 
For the hybrid system, we show that there exists 
at most one periodic orbit without resets, 
while numerical simulations indicate the possibility 
of coexistence of a reset-free periodic orbit 
and periodic spiking attractors. 
We also prove that some natural polygons 
contained in the phase space of the hybrid system
form compact positively invariant globally attracting 
sets of this system. These results provide 
a rigorous mathematical framework for the analysis 
of dynamic threshold mechanisms in neuron models 
and contribute to the theoretical understanding 
of transitions between subthreshold oscillations 
and spiking behavior under a time-periodic forcing.
\end{abstract}

\maketitle

\begingroup
\renewcommand{\thefootnote}{}
\footnotetext{ORCID IDs: Piotr Bart{\l}omiejczyk 0000-0001-5779-4428; 
Juan Belmonte-Beitia 0000-0002-5003-1150; 
Justyna Signerska-Rynkowska 0000-0002-9704-0425.}
\footnotetext{Corresponding author: 
Justyna Signerska-Rynkowska (\url{justyna.signerska@pg.edu.pl}).}
\endgroup

\section*{Introduction}

The mathematical modeling of single-neuron dynamics
has a long history extending back to the early twentieth century.
One of the first quantitative descriptions was introduced
by Lapicque, who formulated the integrate-and-fire concept
as a simplified representation of neuronal excitation
\cite{Lapicque1907}. In this approach, the neuron is represented
by an electrical circuit whose state evolves in response
to incoming stimuli until a prescribed threshold is reached.
A spike is then generated and the state variable is reset.
Although highly simplified, this formulation initiated
a broad class of threshold-based models that continue
to play an important role in mathematical and computational neuroscience.

A fundamental development in biophysical neuron modeling
was the Hodgkin--Huxley model, formulated as a system
of nonlinear ordinary differential equations describing
the membrane potential together with voltage-dependent ionic currents
\cite{HodgkinHuxley1952}. This framework provided
a mechanistic description of action potential generation
and became a reference point for continuous-time neuron models.
Its subsequent reductions, most notably the FitzHugh--Nagumo model
\cite{FitzHugh1961,Nagumo1962}, retained the main qualitative
features of neuronal excitability while reducing the dimensionality
of the system and thereby facilitating mathematical analysis.
Further ODE-based models extended this approach toward
more complex firing regimes. An important example is
the Hindmarsh--Rose system, which reproduces both spiking
and bursting dynamics within a low-dimensional system
of coupled differential equations \cite{HindmarshRose1984}.
Together, these models illustrate how continuous-time descriptions
can represent neuronal dynamics at different levels
of biophysical detail and mathematical complexity.

Alongside continuous-time formulations, discrete-time models,
also referred to as map-based models,
have become an important alternative, particularly in the form
of low-dimensional iterated maps. Examples include the models
proposed by Chialvo \cite{Chialvo1995,Trujillo2023},
Rulkov \cite{Rulkov2001}, and Courbage et al.
\cite{Courbage2007,Courbage2010,Bartlomiejczyk2023,
Bartlomiejczyk2024,Bartlomiejczyk2025}.
Such models can reproduce characteristic spiking and bursting
patterns using comparatively simple recurrence relations.
Their low dimensionality and discrete update rules are advantageous
both for the investigation of nonlinear dynamical phenomena
and for simulations involving large populations of interacting neurons.

A third class of neuron models combines continuous evolution
with discrete events and can therefore be described
within a hybrid dynamical framework. A representative example
is the Izhikevich model, in which continuous equations governing
the evolution between spikes are supplemented by a discrete reset
applied after the firing condition is satisfied
\cite{Izhikevich2003}. This construction allows a variety
of neuronal firing patterns to be reproduced without introducing
the full complexity of detailed conductance-based models.
Related hybrid mechanisms also arise in models involving
a \emph{dynamic threshold}, where continuous evolution
is accompanied by switching or reset rules
\cite{MHR2012}. These formulations provide an intermediate
description between purely continuous and purely discrete models
and offer a flexible framework for studying neuronal dynamics.

Biological neurons can produce very diverse spiking patterns 
in response to the same transient or persistent inputs. 
For example, while many neurons fire repetitively 
for constant inputs of sufficiently large amplitude, 
some neurons fire only one or a few
spikes at the onset of any steady stimulus. 
These two behaviors are referred to, respectively, 
as tonic and phasic spiking and are connected with Hodgkin’s 
electrophysiological classification of neurons into Type I, 
Type II, and Type III excitability, 
based on their firing behavior under sustained current 
injection \cite{Hodgkin1948}. Within this framework, 
both Type I and Type II neurons fire repetitively 
when adequately excited with a constant stimulus, 
but they differ in their frequency-amplitude ($f$-$I$) curves. 
Type I neurons can sustain extremely low firing rates 
in response to weak currents; theoretically, their $f$-$I$ 
curve is continuous, with the firing frequency smoothly 
approaching zero as the current decreases toward threshold. 
By contrast, Type II neurons cannot maintain arbitrarily 
slow firing as their $f$-$I$ curve displays 
a sudden jump from zero to a distinctly non-zero frequency 
as soon as firing begins. These two excitability types 
have been studied extensively and, in the modeling 
of neuronal activity, have been linked, respectively, 
with a saddle-node on invariant circle (SNIC) 
bifurcation and an Andronov-Hopf bifurcation 
emerging at the onset of firing.

On the other hand, phasic or Type III neurons may generate 
transient spiking in response to an injected current, 
but are incapable of continuous repetitive firing, regardless 
of the amplitude of a constant input. 
Moreover, they exhibit markedly distinct computational properties 
and spiking patterns in response to time-varying 
inputs, which are connected with their high selectivity 
in encoding the occurrence and timing of rapid changes 
in the stimulus. In particular, they are more likely to display 
properties such as coincidence detection, 
post-inhibitory facilitation, slope detection,
or phase precision (see \cite{MHR2012,RSRT2021,RSRT2025}). 
Examples include some spinal cord neurons and auditory 
brain stem neurons in the medial superior olive (MSO neurons), 
which are capable of detecting time differences in the arrival 
of inputs originating from both ears on the order 
of tens of microseconds.

The work \cite{MHR2012} of Meng, Huguet, and Rinzel 
introduces an idealized model for Type III excitability 
in the form of a Leaky Integrate-and-Fire (LIF) model 
with a dynamic threshold, which can rise 
with the subthreshold voltage $V,$ replacing 
the conductance mechanism of the biophysical 
$8$-dimensional MSO neuron model developed by Rothman-Manis 
\cite{RothmanManis2003}. This model, which we call the MHR model, 
is the primary model studied in the current work,
and we introduce it in detail in Section~\ref{sec:model}. 
The authors of \cite{MHR2012} investigate this model 
with the input being the superposition of two half-wave 
rectified sinusoids with different phase shifts 
in relation to a sound wave arriving at each 
of the ears with a different delay. By distinguishing 
tonic and phasic regimes, they report that the 
tonic model has poor selectivity compared to the phasic model 
and that Type III excitability explains the extremely 
precise temporal computations of the MSO neurons 
in the auditory brain stem.

Granados and Huguet have made substantial contributions 
to the study of the MHR and related models 
with a dynamical threshold. In \cite{GH2019} 
they primarily focus on a general class of two-dimensional 
hybrid systems possessing an attracting equilibrium
in the absence of an input, 
and they explore their response to periodic square-wave driving, 
showing that such forcing induces periodic orbits prone to smooth 
and non-smooth grazing bifurcations. Next, by applying 
a 2D piecewise-smooth discontinuous stroboscopic map to the MHR model,
they develop a semi-analytical framework 
for identifying periodic orbits. Specifically, 
they identify parameter ranges where the map functions 
as a quasi-contraction, establishing the existence 
of a (locally) unique maximin periodic orbit. 
Using advanced numerical techniques, 
they further investigate the system's dynamical 
landscape, in particular highlighting regions 
of parameter space defined by coexisting 
stable orbits of varying periods.

The works of Brette and Platkiewicz (\cite{Brette2010,Brette2011}) 
also address the complexity of threshold variability
and its dependence on various factors, which reveal
the importance of handling these issues in the modeling 
of neuronal activity. While the all-or-none principle states 
that neurons only fire a full action potential once 
a specific excitation level is reached, the exact spike initiation 
threshold varies considerably across different cells, recording locations, 
and input stimuli. The value of this threshold is critical because 
it governs a neuron's firing rate and fundamental computational processes, 
such as coincidence detection. Traditionally, since the studies 
of Hodgkin and Huxley, the threshold in neurons has been viewed 
as a fixed voltage level at which
voltage-gated sodium channels open. 
However, as pointed out in \cite{Brette2010}, 
the concept of a rigid threshold has recently been challenged. 
Current research highlights ongoing debates regarding 
the source of threshold fluctuations in vivo and demonstrates 
that spike generation is influenced by complex properties 
of incoming signals, rather than by the membrane potential alone.

To account for this dynamic behavior, 
the authors of \cite{Brette2010,Brette2011}
propose the inactivating leaky integrate-and-fire (iLIF) model, 
which enhances standard integrate-and-fire mechanics by incorporating 
an adaptive threshold controlled by a piecewise-smooth
differential equation. Following an action potential, 
the membrane voltage resets to baseline, while the dynamic threshold 
shifts upward. Despite its mathematical simplicity, 
this phenomenological model accurately captures both 
the abrupt onset of spikes and the observed variability 
in initiation thresholds.

The above-mentioned works involving dynamic threshold models 
use phase-plane methods or computational tools. 
In the current work, we address
a few fundamental questions about the MHR model,
which we try to approach with full mathematical rigor. 
Therefore, we consider basic periodic stimuli, 
such as a rectified sine or pulse wave,
instead of a superposition of various periodic inputs,
and first focus on the non-hybrid non-autonomous 
system, i.e., the continuous version of the model 
without a resetting mechanism. The questions we address concern 
distinguishing between tonic and phasic spiking, 
the existence and uniqueness of spiking and 
non-spiking periodic solutions, and the
coexistence of various spiking patterns.

Developing a general pipeline for analyzing hybrid system dynamics 
is difficult, even for autonomous systems (i.e.,
systems with constant input), mainly because of reset-induced
discontinuities. A common approach is to use an impact map 
(also known as a firing phase or adaptation map, see,
e.g., \cite{TouboulBrette2009,CoombesThulWedgwood2012,COS2001}) 
whose iterates recover the sequence of consecutive spikes. 
However, this method suffers from domain restrictions 
and fails to account for trajectories that never hit the threshold. 
The analysis becomes even more complex with periodic inputs, 
as non-autonomous one-dimensional integrate-and-fire systems 
already exhibit highly complex dynamics 
\cite{Coombes1999,Keener1981,GKC2014,GK2015}.

We believe that our tools can be successfully adapted 
to the rigorous investigation of other hybrid models 
with a dynamic threshold. Therefore, the methods 
developed by us contribute to building a mathematical framework 
for the thorough analysis of dynamic threshold models
extending beyond the MHR model.

The organization of the paper is as follows. 
In Section~\ref{sec:prel}, we recall the necessary preliminaries. 
Section~\ref{sec:model} introduces the model under consideration,
i.e., the Meng--Huguet--Rinzel neuron model. 
Section~\ref{sec:dynnh} is devoted to the dynamics of 
the non-hybrid version of the system. 
The main results of the paper are presented 
in Section~\ref{sec:dynmhr}, where we investigate the dynamics 
of the MHR system. Finally, Section~\ref{sec:discussion} concludes 
the paper with a discussion of the obtained results 
and possible directions for future research.

\section{Preliminaries}\label{sec:prel}

\subsection{Global existence and uniqueness}
The hybrid model considered in this paper
is based on the system of the non-autonomous ODE
on the plane, which can be presented
in the following form
\begin{align}
    \dot{v} &=-v+J(t), \label{eq:ode1}\\
    \dot{\theta} &=g(v)+C\theta\label{eq:ode2},
\end{align}
where $C$ is a constant, $g$ is continuous (even smooth)
and $J(t)$ is either continuous or bounded, piecewise 
continuous with discrete set of discontinuities
(like step function).
Note that in the second case, the system  does not form
 a classical non-autonomous ODE, but rather 
the non-autonomous ODE with a discontinuous 
righthand side
satisfying the Carath\'eodory condition
(see~\cite{Filippov1988} for details).
However, due to the simplicity of the system
\eqref{eq:ode1}-\eqref{eq:ode2}, 
it is possible to justify the \emph{global existence and
uniqueness of solutions} (global means here that
the solution exists for all times $t$).
Namely, since the equation \eqref{eq:ode1} 
is a first-order linear ODE with 
an integrable forcing term $J(t)$ and does not depend 
on the variable $\theta$, it can be solved exactly 
using the integrating factor. 
The general solution with initial condition \(v(t_0)=v_0\) has the form
\begin{equation}
v(t) = v(t;t_0,v_0) =
v_0 e^{-(t - t_0)} + \int_{t_0}^{t} e^{-(t - s)} J(s)\, ds.
\end{equation}
If $J(t)$ is integrable then the function \(v(t)\) given 
by the above formula is \emph{absolutely continuous}
and defined for all $t\in\R$. 
In fact, since, by assumption, discontinuities of \(J(t)\) 
are isolated, the solution \(v(t)\) is 
\emph{globally continuous} and \emph{piecewise}~\(C^1\)
(precisely,  \(C^1\) on any open subinterval where \(J\) 
is continuous).
Moreover, \(v\) has left and right derivatives at each jump point
(isolated discontinuity of \(J\))
and the derivative jump equals the jump of \(J\).
Next, substitute \(v(t)\) into the equation \eqref{eq:ode2}:
\[
\dot{\theta}=g\big(v(t)\big)+C\,\theta(t).
\]
This is a first-order linear ODE in \(\theta\) with a known 
forcing term \(g(v(t))\) being continuous with respect to $t$.
Again the equation \eqref{eq:ode2} can be solved using
the integrating factor and the solution $\theta(t)$ is now
\emph{globally} \(C^1\) and defined for all $t\in\R$.
This explains the global existence of solutions.
In turn, the uniqueness of solutions follows directly from
either the Picard–Lindel\"of theorem, if $J(t)$ is continuous,
or Theorem 2 (Ch. 1 Sec. 1 in ~\cite{Filippov1988}),
if $J(t)$ is summable. A detailed analysis of the explicit form 
of the solutions under certain additional assumptions
is set out in Appendix~\ref{app:A}.

Finally, in both cases the system
\eqref{eq:ode1}-\eqref{eq:ode2} induces the \emph{process}
(\emph{evolution operator})
\[
\psi(t;t_0,z_0),\quad
\psi(t_0;t_0,z_0)=z_0=(v_0,\theta_0)\in\R^2,
\]
which gives the state at time $t$ of the solution 
that equals $z_0$ at time $t_0$.
The process is defined on the whole $\R\times\R\times\R^2$
and satisfies the \emph{semigroup (cocycle) property}
\[
\psi(t; s, \psi(s; t_0, z_0)) = \psi(t; t_0, z_0)
\quad\text{for all \(t \ge s \ge t_0.\)}
\]
Moreover, it is continuous but nonsmooth at points where 
the forcing term $J(t)$ has a jump. We will reserve the symbol 
$\phi(t;t_0,z_0)$ for future use to denote the process 
corresponding to the hybrid system, i.e., the system
\eqref{eq:ode1}-\eqref{eq:ode2} with additional 
reset conditions.

\subsection{Invariant sets}
Consider the IVP
\begin{equation}\label{eq:ivp}
\Bigg\{
\begin{aligned}
&\;\dot{x} = F(t,x) \\
&\;x(t_0) = x_0
\end{aligned}
\Bigg.
\end{equation}

Recall that $D\subset\R^n$ is called
\emph{positively invariant} for \eqref{eq:ivp}
if assuming $x_0\in D$
we have $x(t;t_0,x_0)\in D$
for all $t\ge t_0$.
This definition will also work
for the hybrid systems, which will be defined 
in the next section.
Roughly speaking, any trajectory of such a system which enters 
$D$ remains there forever.
The vector $y\neq0\in\R^n$ is called \emph{outer normal}
of $D$ in $x\in\partial D$ if
$B(x+y,\abs{y})\cap D=\emptyset$.
The following result is a classical consequence 
of Nagumo’s theorem (see e.g.~\cite{AubinCellina1984}).

\begin{proposition}\label{prop:posinv}
Assume that $D\subset\R^n$ is a closed convex set.
Then the following statements are equivalent:
\begin{enumerate}
    \item $D$ is positively invariant for \eqref{eq:ivp},
    \item $\langle F(t,x)\mid y\rangle\le0$
    for all $t$, $x\in\partial D$ and
    $y$ outer normal of $D$ in $x$.
\end{enumerate}
\end{proposition}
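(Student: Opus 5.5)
The plan is to reduce the statement to the classical Nagumo (Bony--Brezis) tangency theorem for time-dependent fields. For a closed set $D$, that theorem says $D$ is positively invariant if and only if $F(t,x)\in T_D(x)$ for all $t$ and all $x\in\partial D$, where $T_D(x)$ is the Bouligand contingent cone. We take it in the form of~\cite{AubinCellina1984}, under the standing assumption that solutions of \eqref{eq:ivp} exist and are unique; this holds for the systems considered here by the preliminaries above. The whole proof then consists in translating the tangency condition into the outer-normal condition (2) using convexity of $D$.

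\textbf{Step 1 (convex geometry).} For closed convex $D$ and $x\in\partial D$, the contingent cone equals the closed cone $T_D(x)=\overline{\bigcup_{h>0}(D-x)/h}$. Its polar is the normal cone $N_D(x)=\{y:\langle y\mid d-x\rangle\le0 \ \forall d\in D\}$. We must identify the nonzero elements of $N_D(x)$ with the outer normals in the sense of the paper, i.e.\ vectors $y\neq0$ with $B(x+y,\abs{y})\cap D=\emptyset$.

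In one direction, suppose $y\in N_D(x)$ and $d\in D$. Then
\[
\abs{d-x-y}^2=\abs{d-x}^2-2\langle d-x\mid y\rangle+\abs{y}^2\ge\abs{y}^2,
\]
so $d$ does not lie in the open ball $B(x+y,\abs{y})$. Conversely, suppose the open ball misses $D$ but $\langle y\mid d-x\rangle>0$ for some $d\in D$. The points $x+s(d-x)$ with small $s>0$ lie in $D$ by convexity. For them
\[
\abs{x+s(d-x)-x-y}^2=\abs{y}^2-2s\langle d-x\mid y\rangle+s^2\abs{d-x}^2<\abs{y}^2,
\]
which is a contradiction. Hence the outer normals are exactly the vectors in $N_D(x)\setminus\{0\}$.

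\textbf{Step 2 (duality).} By the bipolar theorem for closed convex cones, $T_D(x)=N_D(x)^{\circ}$. Therefore $F(t,x)\in T_D(x)$ if and only if $\langle F(t,x)\mid y\rangle\le0$ for every outer normal $y$. Combined with Nagumo's theorem, this gives (1)$\Leftrightarrow$(2).

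\textbf{Step 3 (directions of the argument, checked directly).} For (1)$\Rightarrow$(2) the argument is elementary. If $x\in\partial D$ and the trajectory starting from $x$ stays in $D$, then $\langle x(t)-x\mid y\rangle\le0$ for $t>t_0$. Dividing by $t-t_0$ and letting $t\to t_0$ gives $\langle F(t_0,x)\mid y\rangle\le0$. This needs continuity of $F$ in $t$, or else an a.e.\ statement in the Carathéodory case.

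The converse (2)$\Rightarrow$(1) is the substantive part, and I expect it to be the main obstacle. First I would try the distance-function argument. Put $\delta(t)=\dist(x(t),D)^2$. By convexity, $\delta$ is differentiable with $\dot\delta=2\langle \dot x\mid x-\pi_D(x)\rangle$, where $\pi_D$ is the metric projection. Write
\[
\langle F(t,x)\mid x-\pi_D(x)\rangle=\langle F(t,\pi_D(x))\mid x-\pi_D(x)\rangle+\langle F(t,x)-F(t,\pi_D(x))\mid x-\pi_D(x)\rangle .
\]
The vector $x-\pi_D(x)$ is an outer normal at $\pi_D(x)$ (or zero), so the first term is $\le0$ by (2). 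If $F$ is locally Lipschitz in $x$ with constant $L$, the second term is at most $L\,\delta$. This yields $\dot\delta\le2L\delta$ with $\delta(t_0)=0$, and Gronwall's inequality gives $\delta\equiv0$.

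Without a Lipschitz condition, uniqueness alone is used and one must invoke the general Nagumo theorem as cited. The point to flag is that for our system \eqref{eq:ode1}--\eqref{eq:ode2} the field is affine in $v$ (and in $\theta$), with $g$ smooth, so it is locally Lipschitz. The Gronwall route therefore suffices even when $J$ is discontinuous, provided we work with absolutely continuous solutions and a.e.\ derivatives.
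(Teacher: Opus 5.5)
Your proposal is correct, and its main line matches the paper's. The paper gives no argument beyond calling the statement a classical consequence of Nagumo's theorem \cite{AubinCellina1984}. Your Steps~1--2 supply the translation the paper leaves implicit: for closed convex $D$, the outer normals in the paper's (open-ball) sense are exactly the nonzero vectors of $N_D(x)$. Hence tangency $F(t,x)\in T_D(x)=N_D(x)^{\circ}$ is precisely condition~(2).

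The genuinely different part is Step~3. The argument via $\delta(t)=\dist(x(t),D)^2$, the identity $\nabla\dist(\cdot,D)^2=2(\mathrm{id}-\pi_D)$ and Gronwall proves (2)$\Rightarrow$(1) from scratch for fields locally Lipschitz in $x$, uniformly in $t$. It needs only absolutely continuous solutions and (2) for almost every $t$. It therefore covers the pulse-wave forcing, where $F$ is discontinuous in $t$ and the classical Nagumo theorem, stated for continuous $F$, does not literally apply. What the bare citation buys in exchange is generality: it requires only continuity and uniqueness, not a Lipschitz bound.

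Your caveat on (1)$\Rightarrow$(2) is a real correction rather than a technicality. For $F$ discontinuous in $t$, condition (2) ``for all $t$'' cannot follow from (1), because altering $F$ on a null set of times changes no solution. For example, take $\dot x=a(t)$ with $a\equiv-1$ except $a(0)=1$: the set $D=(-\infty,0]$ is positively invariant, yet (2) fails at $t=0$. So only an a.e.\ version, or the inequality for right limits $F(t^{+},x)$, can be derived.
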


\section{Model}\label{sec:model}

\subsection{Introduction of the model}
\begin{figure}[htbp]
  \centering
  \begin{tikzpicture}[scale=1]
    \path[use as bounding box] (-1,-1) rectangle (4.8,4.8);

    \fill[blue!20]
      (0,0) -- (0,4.5) -- (4.5,4.5) -- cycle;

    \fill[red!20]
      (0,0) -- (4.5,0) -- (4.5,4.5) -- cycle;

    \draw[very thick, blue]
      (0,0) -- (4.5,4.5);

    \draw[->, very thick]
      (-1,0) -- (4.8,0)
      node[anchor=north west] {$v$};

    \draw[->, very thick, blue]
      (0,0) -- (0,4.8)
      node[anchor=south east, black] {$\theta$};

    \draw[very thick]
      (0,-1) -- (0,0);

    \node at (2.8,1)
      {\textcolor{red!70!black}{\textbf{spiking region $\mathbb{S}$}}};

    \node at (2.8,0.48)
      {\textcolor{red!70!black}{\textbf{(forbidden)}}};

    \node at (1.5,3.2)
      {\textcolor{blue!70!black}{\textbf{phase space} $\Ph$}};
  \end{tikzpicture}

  \caption{Phase space $\Ph$ of the hybrid system
    \eqref{eq:mhr1}--\eqref{eq:mhr3}. Resetting conditions~\eqref{eq:mhr3}, together with the assumption
    $\dot{v}>0$ for $v=0$ guarantee that trajectories of the hybrid
    system never leave the set $\Ph$.}
  \label{fig:phase}
\end{figure}

Let us consider a \emph{hybrid} model with two variables:
a \emph{dynamic threshold} $\theta$ and \emph{subthreshold voltage} $v$.
In this model a threshold $\theta$ can change
with the voltage $v$.
We will assume that the phase space of our hybrid system
is the subset of $\R^2$ determined by the double inequality
$\theta\ge v\ge0$ (see Figure~\ref{fig:phase})
and will be denoted by $\Ph$.
Moreover, let us consider the \emph{threshold line}
$\mathbb{L}=\{(v,\theta)\mid \theta=v\ge0\}$,
the \emph{subthreshold domain}
$\mathbb{D}=\{(v,\theta)\mid \theta>v\ge0\}=\Ph\setminus\mathbb{L}$
and the \emph{spiking region}
$\mathbb{S}=\{(v,\theta)\mid v>\theta\ge0\}$.
The equations of the hybrid non-autonomous system are:
\begin{align}
    \dot{v} &=-v+v_{\mathrm{rest}}+I(t), \label{eq:mhr1}\\
    \dot{\theta} &=\big(f(v)-\theta\big)/\tau \label{eq:mhr2},
\end{align}
with the \emph{resetting conditions}:
\begin{equation}
\text{
if $h(s)$ changes sign at $s=t$ then 
$v(t^+)=v_{\text{reset}}=0$ and 
$\theta(t^+)=\theta(t^-)+\Delta$,
} \label{eq:mhr3}  
\end{equation}
where $h(s)=\theta(s)-v(s)$
in the neighborhood of point $t$ is calculated 
using the solutions of the non-hybrid system 
\eqref{eq:mhr1}-\eqref{eq:mhr2}. 
Of course, since $h$ is continuous, we have $h(t)=0$, i.e., $v(t^-)=\theta(t^-)$,
but this condition is only necessary (not sufficient)
for the reset, because we do not want to apply the reset
if the contact with the diagonal (threshold line)
is internally tangential, i.e., \emph{grazing} happens
(see Figure~\ref{fig:grazing}).
This means that whenever a trajectory of the system
\eqref{eq:mhr1}-\eqref{eq:mhr2} crosses 
from left to right the threshold line
$\mathbb{L}$ (i.e., enters the spiking region)
the variables $v$ and $\theta$
are updated to new values given by \eqref{eq:mhr3},
which describes post-spike
resetting after which we record one spike.

\begin{figure}[htbp]
  \centering
  \begin{tikzpicture}[
    >=Stealth,
    scale=1,
    traj/.style={thick, blue!70!black},
    postspike/.style={thick, blue!70!black, dashed},
    threshold/.style={red!80!black, dashed, thick},
    axis/.style={->, thin, gray},
    every node/.style={font=\small}
  ]

    \def\panelsep{4.2}

    \begin{scope}[shift={(0,0)}]
      \draw[axis] (-0.3,0) -- (2.3,0) node[right] {$v$};
      \draw[axis] (0,-0.3) -- (0,2.3) node[above] {$\theta$};
      \draw[threshold] (0,0) -- (2.1,2.1) node[right, black] {$v=\theta$};

      \draw[traj, ->]
        (0.2,0.6) .. controls (0.4,0.9) and (0.8,1.2) .. (1.2,1.2);

      \fill[blue!70!black] (1.2,1.2) circle (1.3pt);

      \draw[postspike, ->]
        (1.2,1.2) .. controls (1.4,1.18) and (1.6,1.14) .. (1.9,0.95);

      \node[below] at (1.4,-0.5) {{\footnotesize \textbf{A.} reset}};
      \node[below] at (1.4,-0.9) {{\footnotesize transversal crossing,}};
      \node[below] at (1.4,-1.3) {{\footnotesize ($h$ changes sign)}};
    \end{scope}

    \begin{scope}[shift={(\panelsep,0)}, rotate around={45:(1,1)}]
      \begin{scope}[rotate around={-45:(1,1)}]
        \draw[axis] (-0.3,0) -- (2.3,0) node[right] {$v$};
        \draw[axis] (0,-0.3) -- (0,2.3) node[above] {$\theta$};
        \draw[threshold] (0,0) -- (2.1,2.1) node[right, black] {$v=\theta$};
      \end{scope}

      \draw[traj, domain=-0.9:0, smooth, variable=\x, ->]
        plot({1+\x},{1-0.4*(\x)^3});

      \draw[postspike, domain=0:0.9, smooth, variable=\x, ->]
        plot({1+\x},{1-0.4*(\x)^3});

      \fill[blue!70!black] (1,1) circle (1.3pt);

      \begin{scope}[rotate around={-45:(1,1)}]
        \node[below] at (1.5,-0.5) {\textbf{B.} {\footnotesize reset}};
        \node[below] at (1.5,-0.9) {{\footnotesize tangent crossing,}};
        \node[below] at (1.5,-1.3) {{\footnotesize ($h$ changes sign)}};
      \end{scope}
    \end{scope}

    \begin{scope}[shift={(2*\panelsep,0)}, rotate around={45:(1,1)}]
      \begin{scope}[rotate around={-45:(1,1)}]
        \draw[axis] (-0.3,0) -- (2.3,0) node[right] {$v$};
        \draw[axis] (0,-0.3) -- (0,2.3) node[above] {$\theta$};
        \draw[threshold] (0,0) -- (2.1,2.1) node[right, black] {$v=\theta$};
      \end{scope}

      \draw[traj, domain=-0.7:0.8, smooth, variable=\x, ->]
        plot({1+\x},{1+(\x)^2});

      \fill[blue!70!black] (1,1) circle (1.3pt);

      \begin{scope}[rotate around={-45:(1,1)}]
        \node[below] at (1.3,-0.5) {{\footnotesize \textbf{C.} no reset,}};
        \node[below] at (1.3,-0.9) {{\footnotesize smooth grazing,}};
        \node[below] at (1.3,-1.3) {{\footnotesize (no change of sign)}};
      \end{scope}
    \end{scope}

    \begin{scope}[shift={(3*\panelsep,0)}, rotate around={45:(1,1)}]
      \begin{scope}[rotate around={-45:(1,1)}]
        \draw[axis] (-0.3,0) -- (2.3,0) node[right] {$v$};
        \draw[axis] (0,-0.3) -- (0,2.3) node[above] {$\theta$};
        \draw[threshold] (0,0) -- (2.1,2.1) node[right, black] {$v=\theta$};
      \end{scope}

      \draw[traj, domain=-0.7:0, smooth, variable=\x]
        plot({1+\x},{1+0.9*(\x)^2});

      \draw[traj, ->, domain=0:0.4, smooth, variable=\x]
        plot({1-\x},{1-2.2*(\x)^2+3*\x});

      \fill[blue!70!black] (1,1) circle (1.3pt);

      \begin{scope}[rotate around={-45:(1,1)}]
        \node[below] at (1.3,-0.5) {{\footnotesize \textbf{D.} no reset,}};
        \node[below] at (1.3,-0.9) {{\footnotesize non-smooth grazing,}};
        \node[below] at (1.3,-1.3) {{\footnotesize (no change of sign)}};
      \end{scope}
    \end{scope}

  \end{tikzpicture}

  \caption{Any trajectory crossing (A. transversal or B. tangential)
    the threshold line triggers a reset. Grazing (C. smooth or D. non-smooth)
    does not trigger a reset. Dashed line shows the continuation of the
    trajectory in the non-hybrid system.}
  \label{fig:grazing}
\end{figure}

It is assumed that
$v_\mathrm{rest}>0$ (resting voltage), $\Delta>0$ (jump value in $\theta$
after the reset) and $\tau>1$ are constants.
The authors of \cite{MHR2012} have chosen the formula
\begin{equation}\label{eq:f}
f(v)=a+\exp(b(v-c))
\quad\text{with $a,b,c>0$}
\end{equation}
for the $\theta$-nullcline, i.e.,  
the curve $\theta=f(v)$.
We assume that the input current $I(t)$
is periodic with a period $T$ and continuous
or with a finite number of discontinuities in $[0,T]$.
Moreover, we require $I(t)$ to satisfy
\begin{equation}\label{eq:input}
0\le I(t)\le A\quad\text{for some $A\ge0$ and all $t$.}
\end{equation}
In fact, we will mainly consider two versions of input:
\emph{half-wave-rectified wave} and \emph{pulse wave}.
The first one is given by
\begin{equation}\label{eq:inputSinus}
I(t)=A\cdot\max{\{0,\sin(2\omega\pi t)\}},  
\end{equation}
where $A$ is the \emph{amplitude}
and $\omega=1/T$ is the \emph{frequency}.
The second one is given by
\begin{equation}\label{eq:inputSquare}
I(t)=\begin{cases}
  A  & \text{if}\ t \in ( nT , nT + dT ],  \\
  0 & \text{if}\ t \in (nT + dT , (n + 1) T ],
\end{cases}
\end{equation}
where $n\in\mathbb{N}$ and $d\in[0,1]$ is the \emph{duty cycle}
(for a \emph{square wave} we have $d=0.5$). 

\begin{figure}[htbp]
  \centering
  \begin{tikzpicture}[>=Stealth, scale=1]
    \begin{groupplot}[
      group style={group size=2 by 1, horizontal sep=2cm},
      width=7cm,
      height=5cm,
      grid=major,
      xlabel={$t$},
      ylabel={$I(t)$},
      ymin=-0.1,
      ymax=1.1,
      samples=400,
      ytick={0,1},
      yticklabels={$0$,$A$}
    ]

      \nextgroupplot[
        title={Half-wave rectified wave}
      ]
      \addplot[blue, thick, domain=0:4]
        {max(0, sin(deg(2*pi*1.0*x)))};

      \nextgroupplot[
        title={Pulse (square) wave}
      ]
      \addplot[red, thick, domain=0:4, samples=400]
        {(x - floor(x/0.5)*0.5 < 0.25 ? 1 : 0)};

    \end{groupplot}
  \end{tikzpicture}

  \caption{Two main types of the input $I(t)$
    in the hybrid and non-hybrid systems:
    half-wave-rectified wave (left panel) and pulse wave
    (right panel).}
  \label{fig:impulse}
\end{figure}

Finally, we will assume that, for $A = 0$, 
the system \eqref{eq:mhr1}-\eqref{eq:mhr2}
possesses an attracting equilibrium point inside $\Ph$,
which is equivalent to the condition
$f(v_\mathrm{rest})>v_\mathrm{rest}$.
Note, however, that we do not assume that
$f(v)>v$ for $v\ge0$ or even that 
$f(v_\mathrm{rest}+A)>v_\mathrm{rest}+A$.

\subsection{Local hybrid process}
The hybrid system \eqref{eq:mhr1}-\eqref{eq:mhr3} generates 
the \emph{local hybrid process}, which will be denoted by 
\[
\phi(t; t_0, x_0) = (\phi_v(t; t_0, x_0),\phi_\theta(t; t_0, x_0))
        =(\bar{v}(t; t_0, v_0), \bar{\theta}(t; t_0, \theta_0)).
 \]
Here, \(x_0=(v_0,\theta_0)\) denotes the initial state of 
the system at time \(t_0\), while \(\phi(t;t_0,x_0)\) represents 
the state of the hybrid system at time \(t\) obtained by starting 
from the initial condition \(x_0\). 
The term \emph{hybrid process} emphasizes that the evolution 
of the system combines two types of dynamics. 
Between reset events, the trajectory evolves continuously according 
to the differential equations \eqref{eq:mhr1}-\eqref{eq:mhr2}. 
Whenever the switching condition \(h(t)=\theta(t)-v(t)=0\) 
is reached and the function \(h\) changes sign, 
the trajectory undergoes an instantaneous jump determined 
by the reset conditions \eqref{eq:mhr3}. Therefore, the hybrid process 
\(\phi(t;t_0,x_0)\) describes both the continuous flow of the system 
and the discrete changes caused by threshold crossings.
Our local hybrid process has the following properties:
\begin{itemize}
			\item \emph{right continuity:} \(\phi(t; t_0, x_0)\) is 
            right-continuous and piecewise differentiable with respect to~$t$,
			\item \emph{discontinuity points:} at reset instants, 
            \(\bar{v}\) and \(\bar{\theta}\) satisfy the jump condition
            \eqref{eq:mhr3},
			\item \emph{semigroup property:} the hybrid process satisfies 
            the semigroup property, with the continuous flow between reset 
            events and the instantaneous state changes determined 
            by the reset conditions.
\end{itemize}
Note that some solutions of the hybrid system may not have 
the past, for example, the solutions that start from $\theta$-axis
and do not correspond to any reset in the hybrid system
and also the solutions that correspond to the crossing
the diagonal from the right to the left in the non-hybrid system
(see Figure~\ref{fig:orbits}). If we denote the domain
of $\phi$ as $\Dom(\phi)$ then we have
\begin{itemize}
                \item $\Dom(\phi)\subset
                \mathbb{R} \times \mathbb{R} \times \mathbb{P}$,
                \item for each $t_0\in\mathbb{R}$
                we have
                $
                [t_0,\infty)\times\{t_0\}\times\mathbb{P}\subset\Dom(\phi)
                $, i.e., the full forward trajectory always exists.
                Actually, there are two possibilities
                for $t_0\in\mathbb{R}$
                and $x_0\in\mathbb{P}$
                \[
                \text{either}\;\,
                \mathbb{R}\times\{t_0\}\times\{x_0\}\subset\Dom(\phi),
                \vspace{-1mm}
                \]
                \[
                \text{or}\;\,
                [t_0-r,\infty)\times\{t_0\}\times
                \{x_0\}\subset\Dom(\phi)
                \quad\text{for some $r\ge0$.}
                \]              
\end{itemize}

Finally, in our hybrid dynamical system, 
we can distinguish several types of orbits
depending on the presence of reset events. 
A \emph{reset-free (rf) orbit} is a trajectory 
that evolves entirely under the continuous flow 
and does not intersect the reset set. 
In contrast, a \emph{reset-including (ri) orbit} experiences 
at least one reset. Among ri orbits, we further distinguish 
\emph{finite-reset-including (fri) orbits}, 
which undergo only finitely many resets, 
and \emph{infinite-reset-including (iri) orbits}, 
which experience infinitely many resets. 
Observe that a periodic orbit can only be either rf or iri, 
since periodicity excludes the possibility 
of finitely many resets. 
Moreover, we will prove (see Proposition~\ref{prop:prelMHR}) that 
if our hybrid system (MHR) admits an fri orbit, 
then it necessarily admits an rf periodic orbit. 

\begin{figure}[htbp]
  \centering
  \begin{tikzpicture}[scale=0.9]
    \draw[white] (-1,-1) rectangle (4.5,4.5);

    \fill[blue!20]
      (0,0) -- (0,4.5) -- (4.5,4.5) -- cycle;

    \fill[red!20]
      (0,0) -- (4.5,0) -- (4.5,4.5) -- cycle;

    \draw[very thick, blue]
      (0,0) -- (4.5,4.5);

    \draw[->, very thick]
      (-0.8,0) -- (4.8,0)
      node[anchor=north west] {$v$};

    \draw[->, very thick, blue]
      (0,-0.8) -- (0,4.8)
      node[anchor=south east, black] {$\theta$};

    \draw[very thick, black, dashed, smooth, tension=1]
      plot coordinates {
        (-0.9,3.72)
        (-0.5,3.57)
        (-0.3,3.52)
        (0,3.5)
      };

    \draw[very thick, black, ->, smooth]
      plot coordinates {
        (0,3.5)
        (0.5,3.52)
        (1.0,3.6)
        (1.3,3.68)
        (1.6,3.78)
        (1.9,3.92)
      };

    \draw[very thick, black, dashed, smooth, tension=1]
      plot coordinates {
        (3.4,2.18)
        (3.1,2.07)
        (2.8,2)
        (2.5,1.99)
        (2.1,2.1)
      };

    \fill[black] (2.1,2.1) circle (2pt);

    \draw[very thick, black, ->, smooth]
      plot coordinates {
        (2.1,2.1)
        (1.9,2.2)
        (1.6,2.4)
        (1.3,2.65)
      };

    \coordinate (P2) at (0,3.5);
    \fill[black] (P2) circle (2pt);
  \end{tikzpicture}

  \caption{Examples of trajectories without the past
    in the hybrid system (dashed lines represent parts of trajectories
    of the non-hybrid system).}
  \label{fig:orbits}
\end{figure}

\section{Dynamics of the non-hybrid system}\label{sec:dynnh} 
This section is devoted to the primary investigation
of the dynamics of the system of the classical non-autonomous ODE 
\eqref{eq:mhr1}-\eqref{eq:mhr2}
satisfying \eqref{eq:f} and \eqref{eq:input}.
We denote this system briefly as the NH system.
Clearly, the NH system provides a starting point
and natural background
for understanding the much more 
complicated dynamics of the MHR model.

In what follows, we will need
some auxiliary results.
To formulate them
we will use the following notation
\[
n_L=[-1,0],\quad
n_R=[1,0],\quad
n_B=[0,-1],\quad
n_T=[0,1]\quad
\text{and}\quad
F=[F_1,F_2],
\]
where
\[
F_1=F_1(t,v,\theta)=-v+v_{\text{rest}}+I(t)
\quad\text{and}\quad
F_2=F_2(t,v,\theta)=\big(f(v)-\theta\big)/\tau.
\]
The following result follows from the direct calculations
of the scalar product in $\R^2$.
\begin{lemma}\label{lem:ineq}
\small{
\[
\langle F\mid n_L\rangle=v-v_{\mathrm{rest}}-I(t),\quad
\langle F\mid n_R\rangle=-v+v_{\mathrm{rest}}+I(t),\quad
\langle F\mid n_B\rangle=\big(\theta-f(v)\big)/\tau,\quad
\langle F\mid n_T\rangle=\big(f(v)-\theta\big)/\tau.
\]}
\end{lemma}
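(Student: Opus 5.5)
The plan is simply to compute the four Euclidean inner products in $\R^2$ directly from their definitions. No earlier result is needed beyond the definitions of $F$, $n_L$, $n_R$, $n_B$ and $n_T$ given just above the statement. Writing $F=[F_1,F_2]$, each normal vector has exactly one nonzero entry, equal to $\pm1$. Hence $\langle F\mid n\rangle$ reduces in every case to $\pm F_1$ or $\pm F_2$:
\[
\langle F\mid n_L\rangle=-F_1,\quad
\langle F\mid n_R\rangle=F_1,\quad
\langle F\mid n_B\rangle=-F_2,\quad
\langle F\mid n_T\rangle=F_2.
\]

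The next step is to substitute $F_1=-v+v_{\mathrm{rest}}+I(t)$ and $F_2=\big(f(v)-\theta\big)/\tau$ and simplify the signs. For the left normal, $-F_1=v-v_{\mathrm{rest}}-I(t)$. For the bottom normal, $-F_2=\big(\theta-f(v)\big)/\tau$. The right and top normals give $F_1$ and $F_2$ unchanged. These are exactly the four claimed identities.

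There is no real obstacle, as the argument is purely algebraic. The only care needed is with the signs in the $n_L$ and $n_B$ cases. It is also worth remarking how the lemma will be used: combined with Proposition~\ref{prop:posinv}, the sign of these expressions on the sides of a rectangle (or polygon) in $\Ph$ decides positive invariance. For that purpose, the bound $0\le I(t)\le A$ from \eqref{eq:input} lets one estimate $\langle F\mid n_L\rangle$ and $\langle F\mid n_R\rangle$ uniformly in $t$.
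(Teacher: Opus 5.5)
Your proposal is correct and matches the paper, which likewise obtains the lemma by direct computation of the scalar products in $\R^2$. Reducing each product to $\pm F_1$ or $\pm F_2$ and then substituting is exactly that computation, and your signs check out.
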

Now assume that $\delta_1,\delta_2,\epsilon_1,\epsilon_2\ge0$
and set
\begin{align*}
    V_{\delta_1}&=v_\text{rest}-\delta_1,\\
    V^{\delta_2}&=v_\text{rest}+A+\delta_2,\\
    \Theta_{\delta_1,\epsilon_1}&=f(V_{\delta_1})-\epsilon_1,\\
    \Theta^{\delta_2,\epsilon_2}&=f(V^{\delta_2})+\epsilon_2,
\end{align*}
with $\delta_1$, $\delta_2$, $\epsilon_1$ and $\epsilon_2$ arbitrary nonnegative.
By definition and the monotonicity of $f$, 
\[
V_{\delta_1}\le v_\text{rest},\quad
V^{\delta_2}\ge v_\text{rest}+A\ge 0,\quad
\Theta_{\delta_1,\epsilon_1}\le f(v_\text{rest}),\quad
\Theta^{\delta_2,\epsilon_2}\ge f(v_\text{rest}+A)\ge 0.
\]
Consider the family of closed rectangles
\begin{equation}\label{eq:P}
P_{\delta_1,\epsilon_1}^{\delta_2,\epsilon_2}
=\{(v,\theta)\in\R^2 \mid 
V_{\delta_1}\leq v\leq V^{\delta_2}\text{ and } 
\Theta_{\delta_1,\epsilon_1}\leq \theta
\leq \Theta^{\delta_2,\epsilon_2} \}
\end{equation}
in the $v$-$\theta$ plane
(see Figure~\ref{fig:rec}).
Rectangles from that family will be called
\emph{admissible}.
Finally, let us single out the smallest 
rectangle in the family $P_\text{basic}=P_{0,0}^{0,0}$.

\begin{figure}[htbp]
  \centering
  \begin{tikzpicture}[scale=1.2]
    \def\vrest{2.5}
    \def\A{2}
    \def\frest{1.6}
    \def\fA{2.9}

    \def\Vleft{1.5}
    \def\Vright{6}

    \def\epsTwo{0.7}

    \def\corra{0.68}
    \def\corrb{4.03}

    \pgfmathsetmacro{\k}{ln(\fA/\frest)/\A}
    \pgfmathsetmacro{\fVright}{\frest*exp(\k*(\Vright-\vrest))}
    \pgfmathsetmacro{\Thetahigh}{\fVright+\epsTwo}

    \def\Thetalow{0.5}

    \draw[->]
      (-0.5,0) -- (7.3,0)
      node[below] {$v$};

    \draw[->]
      (0,-0.5) -- (0,5.7)
      node[left] {$\theta$};

    \draw[fill=blue!10]
      (\Vleft,\Thetalow)
      rectangle
      (\Vright,\Thetahigh);

    \draw[fill=red!20, thick]
      (\vrest,\frest)
      rectangle
      ({\vrest+\A},\fA);

    \draw[dashed]
      (\Vleft,0) -- (\Vleft,\Thetahigh);

    \draw[dashed]
      (\vrest,0) -- (\vrest,\fA);

    \draw[dashed]
      ({\vrest+\A},0) -- ({\vrest+\A},\fA);

    \draw[dashed]
      (\Vright,0) -- (\Vright,\Thetahigh);

    \draw[dashed]
      (0,\Thetalow) -- (\Vright,\Thetalow);

    \draw[dashed]
      (0,\frest) -- ({\vrest+\A},\frest);

    \draw[dashed]
      (0,\fA) -- ({\vrest+\A},\fA);

    \draw[dashed]
      (0,\Thetahigh) -- (\Vright,\Thetahigh);

    \draw[dashed]
      (\Vleft,\Thetalow+\corra)
      -- (\Vright,\Thetalow+\corra);

    \draw[dashed]
      (\Vleft,\Thetalow+\corrb)
      -- (\Vright,\Thetalow+\corrb);

    \fill
      (\vrest,\frest)
      circle (2pt);

    \fill
      ({\vrest+\A},\fA)
      circle (2pt);

    \draw[
      thick,
      dashed,
      green!60!black,
      domain=-0.2:\Vleft,
      smooth
    ]
      plot (\x,{\frest*exp(\k*(\x-\vrest))});

    \draw[
      thick,
      green!60!black,
      domain=\Vleft:\Vright,
      smooth
    ]
      plot (\x,{\frest*exp(\k*(\x-\vrest))});

    \draw[
      thick,
      dashed,
      green!60!black,
      domain=\Vright:6.6,
      smooth
    ]
      plot (\x,{\frest*exp(\k*(\x-\vrest))});

    \node[green!60!black]
      at (5.7,{\Thetahigh-0.2})
      {$\theta=f(v)$};

    \node[below]
      at (\Vleft,0)
      {$V_{\delta_1}$};

    \node[below]
      at (\vrest,-0.1)
      {$v_\text{rest}$};

    \node[below]
      at ({\vrest+\A},0)
      {$v_\text{rest}+A$};

    \node[below]
      at (\Vright,0.05)
      {$V^{\delta_2}$};

    \node[left]
      at (0,\Thetalow)
      {$\Theta_{\delta_1,\epsilon_1}$};

    \node[left]
      at (0,\frest)
      {$f(v_\text{rest})$};

    \node[left]
      at (0,\fA)
      {$f(v_\text{rest}+A)$};

    \node[left]
      at (0,\Thetahigh)
      {$\Theta^{\delta_2,\epsilon_2}$};

    \node
      at (3.5,2.2)
      {$P_{\text{basic}}$};

    \node
      at (3.7,{\Thetahigh-0.4})
      {$P_{\delta_1,\epsilon_1}^{\delta_2,\epsilon_2}$};
  \end{tikzpicture}

  \caption{A general positively invariant rectangle
    $P_{\delta_1,\epsilon_1}^{\delta_2,\epsilon_2}$ (in blue)
    and its smallest version $P_\text{basic}$ (in red)
    in the $(v,\theta)$-plane.}
  \label{fig:rec}
\end{figure}

\subsection{Basic and general positively invariant region}
The following result provides estimations
of positively invariant regions for
the NH system. 

\begin{proposition}\label{prop:naode}
For arbitrary
$\delta_1,\delta_2,\epsilon_1,\epsilon_2\ge0$
the closed rectangle 
$P_{\delta_1,\epsilon_1}^{\delta_2,\epsilon_2}$
is positively invariant with respect to the NH system. 
In particular, so is the rectangle
$P_\mathrm{basic}=P_{0,0}^{0,0}$. 
\end{proposition}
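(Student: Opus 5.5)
The plan is to apply Proposition~\ref{prop:posinv} to the closed convex set $D=P_{\delta_1,\epsilon_1}^{\delta_2,\epsilon_2}$ and the vector field $F=[F_1,F_2]$ of the NH system. For a rectangle, the outer normals at a boundary point $x$ are easy to describe. At an interior point of an edge they are the positive multiples of the corresponding vector $n_L,n_R,n_B,n_T$. At a corner they are the nonzero nonnegative combinations of the two adjacent normals; for instance, at the lower-left corner they are $\alpha n_L+\beta n_B$ with $\alpha,\beta\ge0$ and $(\alpha,\beta)\neq(0,0)$. Since the scalar product is linear in $y$, it will therefore be enough to check that $\langle F\mid n\rangle\le 0$ on each closed edge, for the normal $n$ of that edge. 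The formulas of Lemma~\ref{lem:ineq} give the four signs directly.

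On the left edge we have $v=V_{\delta_1}=v_{\mathrm{rest}}-\delta_1$. There $\langle F\mid n_L\rangle=-\delta_1-I(t)\le 0$ because $I(t)\ge0$ by \eqref{eq:input}. On the right edge we have $v=V^{\delta_2}$, and $\langle F\mid n_R\rangle=-(A+\delta_2)+I(t)\le0$ because $I(t)\le A$.

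On the bottom edge we have $\theta=\Theta_{\delta_1,\epsilon_1}=f(V_{\delta_1})-\epsilon_1$ with $v\ge V_{\delta_1}$. Since $f$ from \eqref{eq:f} is increasing ($b>0$), we get $f(v)\ge f(V_{\delta_1})$, so $\langle F\mid n_B\rangle=(\theta-f(v))/\tau\le -\epsilon_1/\tau\le0$, using $\tau>0$. On the top edge we have $\theta=f(V^{\delta_2})+\epsilon_2$ with $v\le V^{\delta_2}$. Then $f(v)\le f(V^{\delta_2})$, so $\langle F\mid n_T\rangle\le-\epsilon_2/\tau\le0$.

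The corners then follow by adding the two adjacent inequalities with nonnegative weights. All four estimates hold for every $t$, so condition (2) of Proposition~\ref{prop:posinv} is satisfied and $D$ is positively invariant. Taking all parameters equal to zero gives the statement for $P_{\mathrm{basic}}$.

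The only delicate point is a technical one. Proposition~\ref{prop:posinv} is a Nagumo-type result and is usually stated for continuous $F$. For the pulse input \eqref{eq:inputSquare}, however, $I$ has jump discontinuities, so $F_1$ is only Carath\'eodory in $t$. I would handle this in one of two ways.

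\emph{Option 1: argue coordinatewise.} Between jumps of $I$ the field is continuous, so the proposition applies on each interval of continuity. The solution $\psi$ is continuous across the jump times, as noted in Section~\ref{sec:prel}, so invariance propagates from one interval to the next.

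\emph{Option 2: check the rectangle directly from the explicit solutions.} From the variation-of-constants formula, $v(t)$ is a convex combination of $v_0$ and averages of $v_{\mathrm{rest}}+I\in[v_{\mathrm{rest}},v_{\mathrm{rest}}+A]$, so $v(t)\in[V_{\delta_1},V^{\delta_2}]$. Likewise $\theta(t)$ is a convex combination of $\theta_0$ and averages of $f(v(s))\in[f(V_{\delta_1}),f(V^{\delta_2})]$, so $\theta(t)\in[\Theta_{\delta_1,\epsilon_1},\Theta^{\delta_2,\epsilon_2}]$.

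Option 2 avoids the regularity issue entirely.
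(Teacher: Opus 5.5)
Your proof is correct and is essentially the paper's: both apply Proposition~\ref{prop:posinv} with the four edge normals $n_L,n_R,n_B,n_T$ and the sign computations from Lemma~\ref{lem:ineq}, using $0\le I(t)\le A$ and the monotonicity of $f$. Beyond that, your treatment of corner normals as nonnegative combinations and your regularity remark for the discontinuous pulse input make explicit what the paper passes over, and your Option~2 (reading the variation-of-constants formulas as convex combinations, in the spirit of the Barrier Lemma) is on its own a complete, Nagumo-free proof valid for any integrable input with values in $[0,A]$.
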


\begin{proof}
By Lemma~\ref{lem:ineq}, the monotonicity of $f$ 
and the estimation $0\le I(t)\le A$, we obtain
\begin{align*}
\langle F\mid n_L\rangle&=v-v_{\text{rest}}-I(t)\le0
\quad\text{for $v=V_{\delta_1}\le v_{\text{rest}}$ 
and every $\theta$},\\
\langle F\mid n_R\rangle&=-v+v_{\text{rest}}+I(t)\le0
\quad\text{for $v=V^{\delta_2}\ge v_{\text{rest}}+A$ 
and every $\theta$},\\
\langle F\mid n_B\rangle&=\big(\theta-f(v)\big)/\tau\le0
\quad\text{for every $v\ge V_{\delta_1}$ and 
$\theta=\Theta_{\delta_1,\epsilon_1}=f(V_{\delta_1})-\epsilon_1$},\\
\langle F\mid n_T\rangle&=\big(f(v)-\theta\big)/\tau\le0
\quad\text{for every $v\le V^{\delta_2}$ and 
$\theta=\Theta^{\delta_2,\epsilon_2}=f(V^{\delta_2})+\epsilon_2$}. 
\end{align*}
Now the assertion follows immediately from 
Proposition~\ref{prop:posinv} and 
the above inequalities. Note that in our case
(rectangle) we do not need to check the condition (2) of
Proposition~\ref{prop:posinv}  for all outer normal vectors.
It is enough to check it for the four outer normal vectors:
$n_L$, $n_R$, $n_B$ and $n_T$ on the respective sides of 
the rectangle $P_{\delta_1,\epsilon_1}^{\delta_2,\epsilon_2}$.
\end{proof}

\begin{remark}
Let us note that the rectangle 
$P_{\delta_1,\epsilon_1}^{\delta_2,\epsilon_2}$ 
is an enlargement of the rectangle $P_\mathrm{basic}$. 
However, it is not true that every rectangle containing 
$P_\mathrm{basic}$ is a positively invariant region. 
According to the definition of set 
$P_{\delta_1,\epsilon_1}^{\delta_2,\epsilon_2}$,
the rule for constructing invariant rectangles containing 
$P_\mathrm{basic}$ is as follows.
Take two values $v$, one such that $v_{1}\le v_{\mathrm{rest}}$ 
and another such that $v_{2}\ge v_{\mathrm{rest}} + A$. 
Then the rectangle whose sides are parallel to the coordinate axes 
and whose opposite vertices are 
$E = (v_{1}, f(v_{1}))$ and $F = (v_{2}, f(v_{2}))$
is a positively invariant region. Moreover, any rectangle obtained 
by arbitrarily extending this rectangle in the vertical direction 
is also a positively invariant region (see Figure~\ref{fig:rec}
for details).
\end{remark}

\begin{remark}
It is also worth noting that the analysis of positively invariant sets 
for the hybrid system will require consideration of rectangles 
$P_{\delta_1,\epsilon_1}^{\delta_2,\epsilon_2}$ of 
a special form, adapted on the one hand to the phase space of 
the hybrid system ($\theta\ge v\ge0$ ) and on the other 
to the reset phenomenon, which shifts the trajectory 
of the hybrid system from the threshold line ($\theta=v\ge0$)
to the reset line ($\theta\ge v=0$).
\end{remark}

\subsection{Global attracting region}
We will show that the rectangle $P_\mathrm{basic}$ 
is also a global attracting region for the NH system,
which means that, after a sufficiently long time, 
any orbit will be arbitrarily close 
to the rectangle $P_\mathrm{basic}$.
In what follows we will need the following simple result.

\begin{lemma}\label{lem:estim}
Let $L>0$ and $\rho(s)$ be an integrable function. Set
\[
H(t)=\ue^{-t/L}\int_{t_0}^t\!\!\! \rho(s)\ue^{s/L}\,ds
\]
Assume that there is $t^*\ge t_0$ such that
$\rho_{\min}\le \rho(s)\le \rho_{\max}$ for $s\ge t^*$. 
Then for every $\epsilon>0$ there is $\hat{t}\ge t^*$ 
such that for all $t\ge\hat{t}$ we get
\[
L\rho_{\min}-\epsilon\le
H(t)\le
L\rho_{\max}+\epsilon.
\]
\end{lemma}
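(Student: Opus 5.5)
The plan is to split the integral at $t^*$ and treat the two pieces separately. For $t\ge t^*$ we write
\[
H(t)=\ue^{-t/L}\int_{t_0}^{t^*}\!\!\rho(s)\ue^{s/L}\,ds
+\ue^{-t/L}\int_{t^*}^{t}\!\!\rho(s)\ue^{s/L}\,ds
=:H_1(t)+H_2(t).
\]
The first term is a fixed constant $K=\int_{t_0}^{t^*}\rho(s)\ue^{s/L}\,ds$, which is finite because $\rho$ is integrable and $\ue^{s/L}$ is bounded on the compact interval $[t_0,t^*]$. It is multiplied by $\ue^{-t/L}$, so $|H_1(t)|=|K|\ue^{-t/L}\to0$ as $t\to\infty$.

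For the second term we use the bounds $\rho_{\min}\le\rho(s)\le\rho_{\max}$ on $[t^*,t]$ together with positivity of $\ue^{s/L}$:
\[
\rho_{\min}\,\ue^{-t/L}\int_{t^*}^{t}\ue^{s/L}\,ds\le H_2(t)\le \rho_{\max}\,\ue^{-t/L}\int_{t^*}^{t}\ue^{s/L}\,ds .
\]
The integral evaluates explicitly:
\[
\ue^{-t/L}\int_{t^*}^{t}\ue^{s/L}\,ds=L\bigl(1-\ue^{-(t-t^*)/L}\bigr).
\]
Hence
\[
L\rho_{\min}-L\abs{\rho_{\min}}\ue^{-(t-t^*)/L}\le H_2(t)\le L\rho_{\max}+L\abs{\rho_{\max}}\ue^{-(t-t^*)/L}.
\]
These bounds handle arbitrary signs of $\rho_{\min}$ and $\rho_{\max}$: since $0\le 1-\ue^{-(t-t^*)/L}\le 1$, the error $\mp L\rho\,\ue^{-(t-t^*)/L}$ is at most $L\abs{\rho}\ue^{-(t-t^*)/L}$ in absolute value.

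To finish, choose $\hat t\ge t^*$ large enough that for all $t\ge\hat t$ we have
\[
|K|\ue^{-t/L}+L\max\{\abs{\rho_{\min}},\abs{\rho_{\max}}\}\,\ue^{-(t-t^*)/L}\le\epsilon .
\]
Such a $\hat t$ exists because both exponentials are decreasing in $t$ and tend to $0$. Combining the estimates for $H_1$ and $H_2$ then gives $L\rho_{\min}-\epsilon\le H(t)\le L\rho_{\max}+\epsilon$ for all $t\ge\hat t$.

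The argument is elementary. The only point needing care is the sign bookkeeping for the factor $1-\ue^{-(t-t^*)/L}$ when $\rho_{\min}$ or $\rho_{\max}$ is negative, which is why the proof bounds the error term by its absolute value rather than dropping it.
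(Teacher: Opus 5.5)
Your proof is correct and follows essentially the same route as the paper: split the integral at $t^*$, note that the contribution from $[t_0,t^*]$ is a constant times $\ue^{-t/L}$, and evaluate the tail integral $\ue^{-t/L}\int_{t^*}^{t}\ue^{s/L}\,ds=L\bigl(1-\ue^{-(t-t^*)/L}\bigr)$ explicitly. Your absolute-value bookkeeping for possibly negative $\rho_{\min},\rho_{\max}$, and your explicit treatment of the lower bound, are slightly more careful than the paper's ``for $t$ large enough'' and ``analogous''.
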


\begin{proof}
Write $M=\int_{t_0}^{t^*}\! \rho(s)\ue^{s/L}\,ds$.
By assumption,
\begin{multline*}
H(t)\le
\ue^{-t/L}\int_{t_0}^{t^*}\!\!\! \rho(s)\ue^{s/L}\,ds+
\rho_{\max}\ue^{-t/L}\int_{t^*}^{t}\!\!\!\ue^{s/L}\,ds\le
\ue^{-t/L}\cdot M
+L\rho_{\max}\ue^{-t/L}\left(\ue^{t/L}-\ue^{t^*/L}\right)\\
\le L\rho_{\max}+\ue^{-t/L}\cdot M-\ue^{(t^*-t)/L}\cdot L\rho_{\max}\le
L\rho_{\max}+\epsilon
\end{multline*}
for $t$ large enough.
The justification of the second estimation is analogous.
\end{proof}

\begin{proposition}\label{prop:attract}
For every arbitrarily small $\gamma>0$ 
and every initial condition
$(t_0,v_0,\theta_0)$ from the extended phase space
there is $\overline{t}\ge t_0$ such that 
for all $t\ge\overline{t}$ we have
\[
v_\mathrm{rest}-\gamma<v(t)<v_\mathrm{rest}+A+\gamma
\quad\text{and}\quad
f(v_\mathrm{rest})-\gamma<\theta(t)<f(v_\mathrm{rest}+A)+\gamma,
\]
where $(v(t),\theta(t))$ is a solution with this initial condition. 
\end{proposition}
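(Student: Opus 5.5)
The plan exploits the triangular (cascade) structure of the NH system. The $v$-equation \eqref{eq:mhr1} is decoupled, so we treat it first. Then we feed the resulting asymptotic bounds on $v$ into the linear $\theta$-equation \eqref{eq:mhr2}. Both steps are applications of Lemma~\ref{lem:estim}.

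\textbf{Step 1: the $v$-component.} By variation of constants,
\[
v(t)=v_0\ue^{-(t-t_0)}+\ue^{-t}\int_{t_0}^t\big(v_\mathrm{rest}+I(s)\big)\ue^{s}\,ds .
\]
Apply Lemma~\ref{lem:estim} with $L=1$, $\rho(s)=v_\mathrm{rest}+I(s)$ and $t^*=t_0$. By \eqref{eq:input}, $\rho_{\min}=v_\mathrm{rest}$ and $\rho_{\max}=v_\mathrm{rest}+A$. For any $\epsilon>0$ the integral term then lies in $[v_\mathrm{rest}-\epsilon,\,v_\mathrm{rest}+A+\epsilon]$ for large $t$. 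The term $v_0\ue^{-(t-t_0)}$ tends to $0$, so it can be absorbed into an extra $\epsilon$. Choosing $\epsilon$ small relative to $\gamma$, we obtain $t_1$ such that
\[
v_\mathrm{rest}-\eta<v(t)<v_\mathrm{rest}+A+\eta\quad\text{for } t\ge t_1,
\]
for any prescribed $\eta\in(0,\gamma]$. The value of $\eta$ will be fixed in Step~2.

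\textbf{Step 2: the $\theta$-component.} Write
\[
\theta(t)=\theta(t_1)\ue^{-(t-t_1)/\tau}+\ue^{-t/\tau}\int_{t_1}^t \frac{f(v(s))}{\tau}\,\ue^{s/\tau}\,ds .
\]
Apply Lemma~\ref{lem:estim} with $L=\tau$, $\rho(s)=f(v(s))/\tau$ and $t^*=t_1$. Since $f$ is increasing, for $s\ge t_1$ we have
\[
f(v_\mathrm{rest}-\eta)\le f(v(s))\le f(v_\mathrm{rest}+A+\eta).
\]
Hence $L\rho_{\min}=f(v_\mathrm{rest}-\eta)$ and $L\rho_{\max}=f(v_\mathrm{rest}+A+\eta)$. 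The homogeneous term decays to zero. So for large $t$ we get
\[
f(v_\mathrm{rest}-\eta)-\epsilon'<\theta(t)<f(v_\mathrm{rest}+A+\eta)+\epsilon' .
\]

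\textbf{Main obstacle: choosing the tolerances.} The only delicate point is how to pick them, and continuity of $f$ settles it. Choose $\eta\le\gamma$ so small that
\[
f(v_\mathrm{rest}+A+\eta)<f(v_\mathrm{rest}+A)+\gamma/2
\quad\text{and}\quad
f(v_\mathrm{rest}-\eta)>f(v_\mathrm{rest})-\gamma/2 .
\]
Then take $\epsilon'=\gamma/2$ and strict inequalities follow. Finally, let $\overline{t}$ be the maximum of the times obtained in Steps~1 and~2.

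Two technical points need a brief check. First, Lemma~\ref{lem:estim} is stated with $t_0$ as the lower limit; using $t_1$ instead is just a relabeling. Second, $I$ may be discontinuous, but $\rho$ remains integrable, which is all Lemma~\ref{lem:estim} requires.
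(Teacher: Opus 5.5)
Your proposal is correct and follows the paper's proof essentially step for step: first apply Lemma~\ref{lem:estim} to the decoupled $v$-equation, then use monotonicity and continuity of $f$ to transfer the $v$-bounds into bounds on $\rho=f(v)/\tau$, and apply the lemma again to the $\theta$-equation with tolerances chosen to sum to $\gamma$. The differences are cosmetic: you put $v_\mathrm{rest}$ into $\rho$, and you restart the $\theta$-integral at $t_1$ rather than using the $t^*$ built into the lemma; your bookkeeping also delivers the strict inequalities of the statement, whereas the paper's proof writes them non-strictly.
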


\begin{proof}
Fix $\gamma>0$ arbitrarily small.
We prove the assertion in two steps.
In the first step we prove the estimations for $v(t)$
and in the second for $\theta(t)$.

\noindent\emph{Step I.} Recall that from the general 
formula for solutions
\[
v(t)=v_\mathrm{rest}+
(v_0-v_\mathrm{rest})\ue^{t_0-t}+\ue^{-t}\int_{t_0}^t\!\!\! I(s)\ue^{s}ds,
\]
where $0\le I(s)\le A$. Hence, by Lemma~\ref{lem:estim} 
(taking $\epsilon=\gamma/2$), we obtain
\[
-\gamma/2\le
\ue^{-t}\int_{t_0}^t\!\!\! I(s)\ue^{s}ds\le
A+\gamma/2
\]
for $t$ greater than some $\hat{t}$.
Consequently, regardless of the sign of the term $v_0-v_\mathrm{rest}$,
\[
v_\mathrm{rest}-\gamma\le
v(t)\le
v_\mathrm{rest}+A+\gamma
\]
for $t\ge\overline{t}_1\ge\hat{t}\ge t_0$,
where $\overline{t}_1$ is large enough.\vspace{1mm}

\noindent\emph{Step II.}
Again from the general 
formula for solutions
\[
\theta(t)=
\ue^{(t_0-t)/\tau}\theta_0+
\ue^{-t/\tau}\frac1\tau
\int_{t_0}^t\!\! f\big(v(s)\big)\ue^{s/\tau}\,ds.
\]
By the monotonicity and continuity of $f$,
there is $0<\xi<\gamma/2$ such that
\[
f(v_\mathrm{rest})-\gamma/4\le
f(v_\mathrm{rest}-\xi)\le
f(v_\mathrm{rest}+A+\xi)\le
f(v_\mathrm{rest}+A)+\gamma/4.
\]
From the previous step for $s$ greater than some $t^*\ge t_0$
we have $v_\mathrm{rest}-\xi\le v(s)\le v_\mathrm{rest}+A+\xi$
and, in consequence, by monotonicity,
\[
f(v_\mathrm{rest})-\gamma/4\le
f(v(s))\le
f(v_\mathrm{rest}+A)+\gamma/4.
\]
Now from Lemma~\ref{lem:estim} 
(taking now $\epsilon=\tau\gamma/4$) we get
\[
f(v_\mathrm{rest})-\gamma/2\le
\ue^{-t/\tau}\frac1\tau
\int_{t_0}^t\!\! f\big(v(s)\big)\ue^{s/\tau}\,ds\le
f(v_\mathrm{rest}+A)+\gamma/2
\]
and finally
\[
f(v_\mathrm{rest})-\gamma\le
\theta(t)\le
f(v_\mathrm{rest}+A)+\gamma
\]
for $t\ge\overline{t}_2\ge t^*\ge t_0$,
where $\overline{t}_2$ is large enough.
Setting $\overline{t}=\max\{\overline{t}_1,\overline{t}_2\}$ 
completes the proof.
\end{proof}

\begin{remark}
Roughly speaking, Proposition~\ref{prop:attract} states
that the interesting dynamics of the NH system
is concentrated on the set $P_\mathrm{basic}$.
\end{remark}

\subsection{Stable periodic orbit}
Now we prove that the NH system has a unique
$T$-periodic orbit,
which is asymptotically stable.
In our analysis, we will need the following simple lemma.
\begin{lemma}\label{lem:scalarode}
Assume that $\alpha<0$ and $\beta(t)$ is $T$-periodic piecewise 
continuous (or even only integrable). 
Then the scalar differential equation
\[
\dot{x}=\alpha x+\beta(t)
\]
has a unique $T$-periodic orbit,
which is asymptotically stable.
\end{lemma}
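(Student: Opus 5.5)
We will prove the lemma by writing the solutions of the linear equation explicitly and studying the period map. With the integrating factor, the solution of $\dot{x}=\alpha x+\beta(t)$ with $x(t_0)=x_0$ is
\[
x(t;t_0,x_0)=\ue^{\alpha(t-t_0)}x_0+\int_{t_0}^{t}\ue^{\alpha(t-s)}\beta(s)\,ds .
\]
Since $\beta$ is only integrable, this is understood in the Carath\'eodory sense, as in Section~\ref{sec:prel}. Solutions are absolutely continuous, exist globally and are unique.

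Take $t_0=0$ and define the Poincar\'e (period) map $\Pi(x_0)=x(T;0,x_0)=\ue^{\alpha T}x_0+b$, where $b=\int_0^T\ue^{\alpha(T-s)}\beta(s)\,ds$. Because $\alpha<0$, we have $0<\ue^{\alpha T}<1$, so $\Pi$ is an affine contraction. It therefore has exactly one fixed point,
\[
x^*=\frac{b}{1-\ue^{\alpha T}}.
\]

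By $T$-periodicity of $\beta$, the equation is invariant under $t\mapsto t+T$. Hence $y(t)=x(t+T;0,x^*)$ is also a solution, and $y(0)=\Pi(x^*)=x^*$. Uniqueness of solutions then gives $y\equiv x(\cdot;0,x^*)$, so this solution is $T$-periodic.

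Conversely, any $T$-periodic solution satisfies $x(T)=x(0)$, so $x(0)$ is a fixed point of $\Pi$. This gives uniqueness of the periodic orbit. The same argument applies for any initial time $t_0$, since the corresponding map is again $x_0\mapsto \ue^{\alpha T}x_0+b_{t_0}$.

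For asymptotic stability, subtract two solutions. Their difference $w$ solves $\dot w=\alpha w$, so
\[
|x(t;t_0,x_0)-x^{\mathrm{per}}(t)|=\ue^{\alpha(t-t_0)}|x_0-x^{\mathrm{per}}(t_0)|\to0 .
\]
The periodic orbit is thus globally, uniformly and exponentially attracting, which is stronger than asymptotic stability.

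The only delicate point is justifying the integrating-factor formula and the uniqueness when $\beta$ is merely integrable. We will handle it with the Carath\'eodory framework already invoked in Section~\ref{sec:prel}, namely Theorem 2 of \cite{Filippov1988}. Everything else is elementary.
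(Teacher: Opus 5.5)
Your proof is correct and follows the same route as the paper: the explicit integrating-factor formula, the affine Poincar\'e map $\Pi(x)=\ue^{\alpha T}x+b$ with $0<\ue^{\alpha T}<1$, and its unique globally attracting fixed point. You also spell out two steps the paper leaves implicit: the time-shift-plus-uniqueness argument showing that the fixed point gives a genuinely $T$-periodic solution, and the estimate via $\dot w=\alpha w$, which yields exponential stability for all $t$ rather than only at multiples of $T$.
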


\begin{proof}
Since we have the explicit formula for a solution
with initial condition $x(0)=x_0$, namely,
\[
x(t)=e^{\alpha t}\Bigl(x_0+\int_{0}^{t} e^{-\alpha s} \beta(s)\,ds\Bigr),
\]
we have also the explicit formula for the scalar
Poincar\'e map $\Pi\colon\mathbb{R}\to\mathbb{R}$
\[
\Pi(x)=e^{\alpha T}x+\gamma,
\]
where $\gamma$ is constant.
Now, since, by assumption, $\Pi$ is a linear contraction,
it has a unique fixed point,
which attracts all points from the real line.
This fixed point corresponds to
a unique $T$-periodic orbit of the differential equation,
which is asymptotically stable.
\end{proof}

The following result describes the periodicity aspect
of the non-hybrid system.

\begin{theorem}\label{thm:NHperiodic}
The NH system has a unique $T$-periodic orbit,
which is asymptotically stable.
\end{theorem}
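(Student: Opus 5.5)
The system is triangular: the $v$-equation \eqref{eq:mhr1} does not involve $\theta$, and for a known $v(t)$ the $\theta$-equation \eqref{eq:mhr2} is linear. The plan is to apply Lemma~\ref{lem:scalarode} twice, first to $v$ and then to $\theta$.

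\emph{Step 1 (the $v$-component).} Equation \eqref{eq:mhr1} is $\dot v=\alpha v+\beta(t)$ with $\alpha=-1$ and $\beta(t)=v_{\mathrm{rest}}+I(t)$. Since $\beta$ is $T$-periodic and piecewise continuous, Lemma~\ref{lem:scalarode} gives a unique $T$-periodic solution $v^*(t)$. The associated Poincar\'e map is $v\mapsto \ue^{-T}v+\gamma_1$, so $|v(t)-v^*(t)|=\ue^{-(t-t_0)}|v_0-v^*(t_0)|$ for every solution.

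\emph{Step 2 (the $\theta$-component along $v^*$).} Substitute $v^*$ into \eqref{eq:mhr2} to obtain $\dot\theta=-\theta/\tau+f(v^*(t))/\tau$. Here $f\circ v^*$ is continuous and $T$-periodic, and $\alpha=-1/\tau<0$, so Lemma~\ref{lem:scalarode} gives a unique $T$-periodic $\theta^*$. Thus $(v^*,\theta^*)$ is a $T$-periodic solution of the NH system. For uniqueness, take any $T$-periodic solution $(v,\theta)$. Its first component is $T$-periodic, so $v=v^*$ by Step 1. Its second component is then a $T$-periodic solution of the same scalar equation, so $\theta=\theta^*$. Equivalently, we can use the Poincar\'e map $\Pi(v_0,\theta_0)=\psi(T;0,(v_0,\theta_0))$, which is well defined by the global existence discussed in Section~\ref{sec:prel}.

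\emph{Step 3 (stability).} This is the step needing some care, because for a general solution the forcing $f(v(t))$ differs from $f(v^*(t))$. Set $w=\theta-\theta^*$. Then
\[
\dot w=-\frac{w}{\tau}+\frac{f(v(t))-f(v^*(t))}{\tau},
\qquad
w(t)=\ue^{-(t-t_0)/\tau}w(t_0)+\frac1\tau\int_{t_0}^t \ue^{-(t-s)/\tau}\bigl(f(v(s))-f(v^*(s))\bigr)\,ds .
\]
By Proposition~\ref{prop:attract}, $v(s)$ eventually lies in a compact interval, and so does $v^*$, which lies in $[v_{\mathrm{rest}},v_{\mathrm{rest}}+A]$. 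On that interval $f$ is Lipschitz with some constant $K$. Hence
\[
|f(v(s))-f(v^*(s))|\le K\ue^{-(s-t_0)}|v_0-v^*(t_0)|.
\]
Inserting this into the integral and using $\tau>1$, the integral is bounded by a constant times $\ue^{-(t-t_0)/\tau}$ (the exponents $1$ and $1/\tau$ differ, so no resonant $t\ue^{-t/\tau}$ term appears). Therefore $|w(t)|\le C\ue^{-(t-t_0)/\tau}$.

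This proves global exponential attraction. It also gives Lyapunov stability, because $C$ depends linearly on $|v_0-v^*(t_0)|$ and $|w(t_0)|$. An alternative route to the same conclusion is to note that the Jacobian of $\Pi$ is lower triangular with diagonal entries $\ue^{-T}$ and $\ue^{-T/\tau}$, both less than $1$, which gives local asymptotic stability of the fixed point.
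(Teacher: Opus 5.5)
Your proof is correct. For existence and uniqueness it is exactly the paper's argument: Lemma~\ref{lem:scalarode} applied first to \eqref{eq:mhr1}, then to \eqref{eq:mhr2} along $v^*$, with uniqueness obtained by matching the first component and then the second.

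Where you go beyond the paper is stability. The paper declares the pair $(v^*,\theta^*)$ asymptotically stable right after its Step~2. However, Step~2 only gives stability of $\theta^*$ for the scalar equation with the \emph{fixed} forcing $f(v^*(t))/\tau$, that is, for perturbations with $v_0=v^*(t_0)$. Perturbing $v$ changes the forcing, and the paper's argument does not cover that case. Your Step~3 fills this in. The variation-of-constants formula for $w=\theta-\theta^*$, combined with a Lipschitz bound on $f$, gives the explicit estimate $|w(t)|\le\bigl(|w(t_0)|+K|v_0-v^*(t_0)|/(\tau-1)\bigr)\ue^{-(t-t_0)/\tau}$. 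This yields both Lyapunov stability and global exponential attraction. The paper proves global convergence only later, in Lemma~\ref{lem:asymptotic}, using uniform continuity of $f$; that gives convergence but no rate, so your Lipschitz argument is sharper. Your alternative via the lower-triangular Jacobian of $\Pi$, with diagonal entries $\ue^{-T}$ and $\ue^{-T/\tau}$, is an equally valid local shortcut.

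One small tidy-up: you do not need Proposition~\ref{prop:attract}, whose entrance time depends on the initial data. The identity $|v(s)-v^*(s)|=\ue^{-(s-t_0)}|v_0-v^*(t_0)|$ already keeps $v(s)$ in $[v_{\mathrm{rest}}-r,\,v_{\mathrm{rest}}+A+r]$ for all $s\ge t_0$, where $r=|v_0-v^*(t_0)|$. Restricting to $r\le 1$ then gives a single Lipschitz constant $K$, and that uniformity is what makes your bound depend linearly on the initial deviation.
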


\begin{proof}
Let us conduct our analysis in two steps
corresponding to the equations \eqref{eq:mhr1} and 
\eqref{eq:mhr2}.\vspace{0.5mm}

\noindent \emph{Step 1.}
Applying Lemma~\ref {lem:scalarode} to 
the scalar equation \eqref{eq:mhr1} 
we obtain a unique $T$-periodic orbit $v(t)$, 
which is asymptotically stable.\vspace{0.5mm}

\noindent \emph{Step 2.}
Now we insert the periodic solution $v(t)$ 
from Step 1 into the scalar equation \eqref{eq:mhr2}
and apply again Lemma~\ref {lem:scalarode} 
this time to the equation \eqref{eq:mhr2}
to obtain a unique $T$-periodic orbit $\theta(t)$, 
which is also asymptotically stable.\vspace{0.5mm}

\noindent Finally, the pair $(v(t),\theta(t))$ forms
a $T$-periodic planar orbit,
which is asymptotically stable. 
Of course, such an orbit is unique, because
its first coordinate must coincide with $v$ (by uniqueness of~$v$)
and, in consequence, its second coordinate 
must coincide with $\theta$ (by uniqueness of~$\theta$).
\end{proof}

\subsection{Global periodic attractor}

We start this subsection with an auxiliary lemma
concerning the convergence of solutions
of the NH system.
Roughly speaking, it states that any two solutions 
of the NH system converge, 
regardless of the time and place of departure.
Namely, consider two solutions $(v(t),\theta(t))$
and $(\overline{v}(t),\overline{\theta}(t))$ of the NH system,
the first with initial condition $(t_0,v_0,\theta_0)$
and the second with $(\overline{t}_0,\overline{v}_0,\overline{\theta}_0)$.

\begin{lemma}\label{lem:asymptotic}
For every $\epsilon>0$ there is $t^*\ge\max\{t_0,\overline{t}_0\}$
such that 
\[
\abs{v(t)-\overline{v}(t)}<\epsilon
\quad\text{and}\quad
\abs{\theta(t)-\overline{\theta}(t)}<\epsilon
\quad\text{for $t\ge t^*$.}
\]
\end{lemma}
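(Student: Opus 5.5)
Both solutions are defined on all of $\R$, so I will compare them for $t\ge t_1:=\max\{t_0,\overline{t}_0\}$. At time $t_1$ each is a solution of the NH system with some initial state, and by uniqueness I may regard both as starting at the common time $t_1$ from $(v(t_1),\theta(t_1))$ and $(\overline{v}(t_1),\overline{\theta}(t_1))$. I then treat the two coordinates in turn, exploiting the triangular structure of the NH system: first $v$, then $\theta$.

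For the $v$-coordinate, set $w=v-\overline{v}$. Subtracting the two copies of \eqref{eq:mhr1} cancels the forcing term $v_\mathrm{rest}+I(t)$, so $\dot w=-w$ and $w(t)=w(t_1)\ue^{-(t-t_1)}$. Hence $\abs{v(t)-\overline{v}(t)}\to0$ exponentially.

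For the $\theta$-coordinate, set $u=\theta-\overline{\theta}$. Then $\dot u=-u/\tau+\rho(t)/\tau$ with $\rho(t)=f(v(t))-f(\overline{v}(t))$, so
\[
u(t)=u(t_1)\ue^{-(t-t_1)/\tau}+\ue^{-t/\tau}\frac1\tau\int_{t_1}^{t}\rho(s)\ue^{s/\tau}\,ds .
\]
The key point is that $\rho(s)\to0$. By Proposition~\ref{prop:attract}, applied with some fixed $\gamma$, both $v(s)$ and $\overline{v}(s)$ eventually lie in the compact interval $K=[v_\mathrm{rest}-\gamma,\,v_\mathrm{rest}+A+\gamma]$. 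The function $f$ is Lipschitz on $K$ with constant $L_K=b\,\ue^{b(v_\mathrm{rest}+A+\gamma-c)}$, so $\abs{\rho(s)}\le L_K\abs{w(s)}$. Alternatively, one can simply use that $f$ is uniformly continuous on $K$. In either case, for any $\eta>0$ there is $t^{**}$ with $\abs{\rho(s)}\le\eta$ for $s\ge t^{**}$.

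Lemma~\ref{lem:estim}, with $L=\tau$, $\rho_{\min}=-\eta$ and $\rho_{\max}=\eta$, then shows that the integral term is eventually bounded in absolute value by $\eta+\epsilon'$. The first term decays exponentially. Choosing $\eta$ and $\epsilon'$ of order $\epsilon/2$ gives $\abs{u(t)}<\epsilon$ for all $t$ beyond some time. Taking $t^*$ to be the maximum of the thresholds obtained for $v$ and for $\theta$ completes the argument.

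The only step needing care is the bound on $\rho$. Since $f$ is exponential, it is not globally Lipschitz, so I must first confine $v$ and $\overline{v}$ to a compact interval before estimating $\rho$. Proposition~\ref{prop:attract} handles this. Even more directly, the explicit formula for $v$ shows that $v$ is bounded on $[t_1,\infty)$, which would suffice as well.
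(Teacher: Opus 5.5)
Your proof is correct and follows essentially the same route as the paper. Both arguments get exponential decay of $v-\overline{v}$ from the cancelled forcing, confine $v,\overline{v}$ to a compact interval via Proposition~\ref{prop:attract} so that $f(v(s))-f(\overline{v}(s))$ becomes uniformly small, and then control the variation-of-constants integral for $\theta-\overline{\theta}$. The differences are only cosmetic: you restart both solutions at $\max\{t_0,\overline{t}_0\}$ and invoke Lemma~\ref{lem:estim} for the integral, whereas the paper carries the constants $M_1,M_2$ and splits the integral by hand at $t'$.
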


\begin{proof}
Let $\epsilon>0$ be arbitrary.
Without loss of generality we can assume that
$\overline{t}_0\ge t_0$. From the general
formulas for the solutions (see Subsection~\ref{appsub:general}
for details) we obtain 
{\small
\begin{align}
v(t)-\overline{v}(t)&=\ue^{-t}
\bigg[(v_0-v_{\mathrm{rest}})\ue^{t_0}-
(\overline{v}_0-v_{\mathrm{rest}})\ue^{\overline{t}_0}+
\int_{t_0}^{\overline{t}_0}\!\!\!\!
I(s)\ue^s\,ds
\bigg]\\
\theta(t)-\overline{\theta}(t)&=\ue^{-t/\tau}
\bigg[\theta_0\ue^{t_0/\tau}-\overline{\theta}_0\ue^{\overline{t}_0/\tau}+
\frac1\tau\int_{t_0}^{\overline{t}_0}\!\!\!
f(v(s))\ue^{s/\tau}\,ds+
\frac1\tau\int_{\overline{t}_0}^{t}\!\!\!
\big(f(v(s))-f(\overline{v}(s))\big)\ue^{s/\tau}\,ds
\bigg]
\end{align}}
Note that the terms
\[
M_1=(v_0-v_{\mathrm{rest}})\ue^{t_0}-
(\overline{v}_0-v_{\mathrm{rest}})\ue^{\overline{t}_0}+
\int_{t_0}^{\overline{t}_0}\!\!\!\!
I(s)\ue^s\,ds
\quad\text{and}\quad
M_2=\theta_0\ue^{t_0/\tau}-\overline{\theta}_0\ue^{\overline{t}_0/\tau}+
\frac1\tau\int_{t_0}^{\overline{t}_0}\!\!\!
f(v(s))\ue^{s/\tau}\,ds
\]
are constant, i.e., independent of time $t$.
In consequence, obviously, there is $t_1\ge\overline{t}_0$
such that $\abs{v(t)-\overline{v}(t)}=\ue^{-t}\abs{M_1}<\epsilon$
for all $t\ge t_1$. Moreover, by Proposition~\ref{prop:attract}, 
there is $t_2\ge t_1$ such that
$v(s),\overline{v}(s)\in[v_{\mathrm{rest}}-1,v_{\mathrm{rest}}+A+1]$
for $s\ge t_2$. 

By the uniform continuity of $f$ on
$[v_{\mathrm{rest}}-1,v_{\mathrm{rest}}+A+1]$
(of course, instead of $1$, we can take any positive real number),
there is $\delta\in(0,\epsilon)$ such that if 
$v,\overline{v}\in[v_{\mathrm{rest}}-1,v_{\mathrm{rest}}+A+1]$
and $\abs{v-\overline{v}}<\delta$ then
$\abs{f(v)-f(\overline{v})}<\epsilon/2$.
Next, let us choose $t'\ge t_2$ such that 
$\abs{v(s)-\overline{v}(s)}<\delta$, which is possible
from the previous part of that proof. Now, denoting
$M_3=\frac1\tau\int_{\overline{t}_0}^{t'}\!\!\!
\big(f(v(s))-f(\overline{v}(s))\big)\ue^{s/\tau}\,ds$,
we obtain
{\small
\begin{multline*}
\abs{\frac1\tau\int_{\overline{t}_0}^{t}\!\!\!
\big(f(v(s))-f(\overline{v}(s))\big)\ue^{s/\tau}\,ds}\\
\le\abs{\frac1\tau\int_{\overline{t}_0}^{t'}\!\!\!
\big(f(v(s))-f(\overline{v}(s))\big)\ue^{s/\tau}\,ds}+
\abs{\frac1\tau\int_{t'}^{t}\!\!\!
\big(f(v(s))-f(\overline{v}(s))\big)\ue^{s/\tau}\,ds}
\\\le\abs{M_3}+\frac{\epsilon}{2}[\ue^{t/\tau}-\ue^{t'/\tau}]
\le\abs{M_3}+\frac{\epsilon}{2}\ue^{t/\tau}.
\end{multline*}}
Finally, we see at once that there is 
$t^*\ge t'\ge t_2\ge t_1$
such that for $t\ge t^*$ we have
\[
\abs{\theta(t)-\overline{\theta}(t)}\le
\ue^{-t/\tau}\left(\abs{M_2}+\abs{M_3}+\frac{\epsilon}{2}\ue^{t/\tau}\right)=
\ue^{-t/\tau}\left(\abs{M_2}+\abs{M_3}\right)+\frac{\epsilon}{2}
<\frac{\epsilon}{2}+\frac{\epsilon}{2}=\epsilon,
\]
which completes the proof.
\end{proof}

The following theorem contains the complete
qualitative description of the dynamics of the NH system.
\begin{mainNH}
\phantom{xxx}
\begin{enumerate}
    \item The NH system has exactly one periodic orbit which is 
    $T$-periodic, asymptotically stable and contained 
    in the rectangle $P_\mathrm{basic}$. In consequence,
    there are no other periodic orbits regardless 
    of their period in the NH system.   
    \item All other orbits of the NH system tend asymptotically
    (as $t\to\infty$) towards this special unique periodic orbit, 
    which consequently is a unique global attractor of the system.
\end{enumerate}
\end{mainNH}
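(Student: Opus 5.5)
The plan is to assemble the Main Theorem from Theorem~\ref{thm:NHperiodic}, Proposition~\ref{prop:attract} and Lemma~\ref{lem:asymptotic}. No new estimates should be needed; the only delicate point is the claim about orbits of arbitrary period.

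\emph{Existence and location.} Theorem~\ref{thm:NHperiodic} gives a unique $T$-periodic orbit $z^*(t)=(v^*(t),\theta^*(t))$, and it is asymptotically stable. To place it in $P_\mathrm{basic}$, apply Proposition~\ref{prop:attract} to $z^*$. For every $\gamma>0$, the point $z^*(t)$ lies in the $\gamma$-enlargement of $P_\mathrm{basic}$ for all large $t$. By periodicity it then lies there for all $t$. Letting $\gamma\to0$ and using that $P_\mathrm{basic}$ is closed gives $z^*(t)\in P_\mathrm{basic}$ for every $t$.

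\emph{No periodic orbits of any other period.} This is the step needing care, since the system is non-autonomous. Here a ``periodic orbit'' means a solution $z(t)$ with $z(t+S)=z(t)$ for some $S>0$, where $S$ need not be related to $T$. Given such a $z$, apply Lemma~\ref{lem:asymptotic} to $z$ and $z^*$: we get $|z(t)-z^*(t)|\to0$ as $t\to\infty$. Now fix $t$ and take $k$ with $k\in\mathbb{N}$. Then
\[
|z(t)-z^*(t+kS)|=|z(t+kS)-z^*(t+kS)|\to0 .
\]
Hence $z(t)$ belongs to the closure of the orbit set $z^*(\R)$. This alone does not give $z=z^*$, so I will use a sharper argument. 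The quantity $d(t)=|z(t)-z^*(t)|$ tends to $0$, while the two differences $v-v^*$ and $\theta-\theta^*$ are explicit.

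First, $v-v^*=Ce^{-t}$ for some constant $C$. Since $v(t+S)=v(t)$ and $v^*$ is bounded, $v$ is bounded for all $t\in\R$. A nonzero $C$ would make $v-v^*$ unbounded as $t\to-\infty$, which is impossible, so $C=0$ and $v=v^*$. Second, $\theta-\theta^*$ solves $\dot y=-y/\tau$, so $\theta-\theta^*=De^{-t/\tau}$. The same boundedness argument as $t\to-\infty$ gives $D=0$. Therefore $z=z^*$, and every periodic orbit, whatever its period, coincides with $z^*$.

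\emph{Global attraction.} For any solution $z$, Lemma~\ref{lem:asymptotic} applied to $z$ and $z^*$ gives $|z(t)-z^*(t)|\to0$. So every orbit tends to the periodic orbit, which is therefore the unique global attractor.

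I expect the main obstacle to be the period-independence statement. It relies on the periodic solution being defined for all $t\in\R$; this holds because solutions of the NH system exist globally, as shown in the Preliminaries. Together with the explicit linear decay of the differences, this rules out periods incommensurate with $T$.
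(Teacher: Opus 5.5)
Your proof is correct. For placing the $T$-periodic orbit in $P_\mathrm{basic}$ and for global attraction you follow the paper's route: Theorem~\ref{thm:NHperiodic}, then Proposition~\ref{prop:attract} combined with periodicity, then Lemma~\ref{lem:asymptotic}.

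You differ from the paper in how you exclude periodic orbits of an arbitrary period $S$. The paper only says that the global forward convergence of all orbits to $z^*$ ``obviously'' rules them out. You instead argue backwards in time. An $S$-periodic solution is bounded on all of $\R$, and differences of solutions have the form $Ce^{-t}$ in $v$ and, once $v=v^*$, $De^{-t/\tau}$ in $\theta$.

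Your route needs no convergence lemma for this step, and it proves more. It shows that $z^*$ is the only solution of the NH system bounded on the whole line, which also rules out quasi-periodic or other entire bounded solutions. The cost is that it depends on the triangular linear structure and on backward global existence.

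The paper's route uses only forward convergence, so it would work for any system in which all solutions merge. Its one-word ``obviously'' does hide a small step when $S/T$ is irrational. The forward argument you started and then abandoned closes that step in one line. Since $z(t+S)=z(t)$ and $|z-z^*|\to0$, the $T$-periodic function $z^*(t+S)-z^*(t)$ tends to $0$, so it vanishes identically. Hence $z-z^*$ is $S$-periodic and tends to $0$, which gives $z=z^*$.
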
  

\begin{proof}
By Theorem~\ref{thm:NHperiodic},
the NH system has a unique $T$-periodic orbit,
which is asymptotically stable.  
Note that this periodic orbit must be contained
in the global attracting region $P_\mathrm{basic}$.
Otherwise, by periodicity, it would return to 
a point lying at fixed constant positive distance from 
the rectangle $P_\mathrm{basic}$, contradicting
Proposition~\ref{prop:attract}. 
Moreover, all orbits of the NH system 
tend asymptotically (as $t\to\infty$) towards 
this periodic orbit. To see this, treat $(v(t),\theta(t))$
as an arbitrary orbit and $(\overline{v}(t),\overline{\theta}(t))$ 
as this fixed periodic orbit and apply Lemma~\ref{lem:asymptotic}.
Finally, such an asymptotic behavior of all orbits obviously rules out 
the existence of other periodic orbits of the NH system.
\end{proof}

\begin{figure}[!hbtp]
    \centering
    \includegraphics[width=0.98\textwidth]{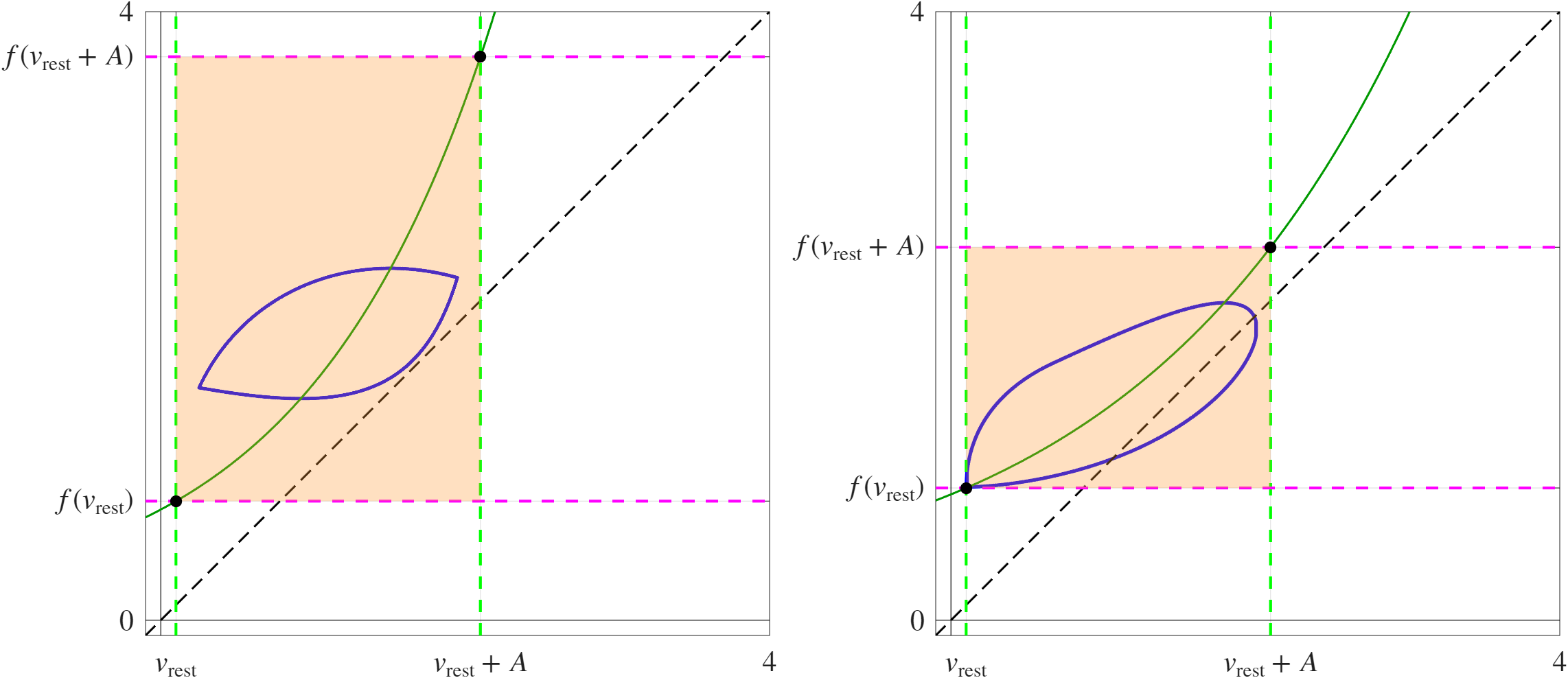}
    \caption{A unique global periodic attractor 
    (in blue) inside the positively invariant rectangle
    $P_\mathrm{basic}$ (in orange) in the NH system. 
    Left panel: square wave input, right panel: 
    half-wave-rectified wave input.
    Simulation parameters: common $a = 0.08, c = 0.53,
    v_\mathrm{rest} = 0.1, \tau = 2, A = 2$; 
    left panel $b = 0.82, T = 5, d = 0.5$, right 
    panel $b = 0.55, \omega = 0.05$.}
    \label{fig:twoNH}
\end{figure}

\begin{remark}
To put it simply, the NH system has the following two key properties:
\begin{itemize}
    \item it loses ``memory'' of the past, i.e.,
    forgets the initial conditions (both time and location); 
    in other words, all solutions ``merge'' into a single solution,
    \item it is globally stable with a unique periodic attractor (see Figure~\ref{fig:twoNH}).
\end{itemize}
\end{remark}

\section{Dynamics of the MHR system}\label{sec:dynmhr}
In this section we study the dynamics of the MHR system, i.e.,
the hybrid system \eqref{eq:mhr1}-\eqref{eq:mhr3}
satisfying the additional conditions 
\eqref{eq:f} and \eqref{eq:input}.
We start with basic observations, which follow
immediately from the properties of its non-hybrid version,
i.e., the NH system (Subsection~\ref{subsec:obvious}).
Then we present the description of the dynamics of the MHR system, 
which requires a more detailed mathematical analysis  
(Subsections~\ref{subsec:absencereset}-\ref{subsec:gradation}).

\subsection{Preliminary results on the MHR system}
\label{subsec:obvious}
The properties of the NH system proved in Section~\ref{sec:dynnh} 
have some obvious (but important) direct implications for 
the corresponding hybrid MHR system. 
We present them below.
\begin{proposition}\label{prop:prelMHR}
\phantom{xxx}    
\begin{enumerate}
    \item The MHR system has at most one reset-free periodic orbit.
    Moreover, if such a reset-free periodic orbit exists 
    in the MHR system then it is contained 
    in the intersection of the phase space 
    $\Ph$ and the rectangle $P_\mathrm{basic}$, i.e.,
    in the polygon $\Ph\cap P_\mathrm{basic}$.
    \item There is a reset-free periodic orbit 
    in the MHR system if and only if there is an orbit 
    with finite (possibly zero) number of resets. In other words,
    there is no reset-free periodic orbit if and only if
    all orbits have infinite number of resets.
    Moreover, if there is a reset-free periodic orbit,
    then all orbits with finite number of resets 
    tend (as $t\to\infty$) towards this periodic orbit.
\end{enumerate}
\end{proposition}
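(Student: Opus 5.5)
The plan is to reduce everything to the NH system, using the observation that a reset-free trajectory of the MHR system is, by definition, a trajectory of the NH system that stays in $\Ph$ and never triggers a reset.

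\emph{Part (1).} Suppose $\gamma(t)$ is a reset-free periodic orbit of the MHR system, with some period $T'>0$. Then $\gamma$ is a periodic orbit of the NH system. By the Main Theorem on the NH system, the NH system has exactly one periodic orbit, namely the unique $T$-periodic attractor, and it lies in $P_\mathrm{basic}$. Hence $\gamma$ coincides with this orbit, which gives uniqueness. Since $\gamma$ is an MHR trajectory, it also lies in $\Ph$, so $\gamma\subset\Ph\cap P_\mathrm{basic}$. One small point needs care: a "periodic orbit" of a non-autonomous system of period $T'$ possibly incommensurable with $T$ is still excluded. This is exactly what the Main Theorem asserts ("regardless of their period"), so I will simply cite it.

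\emph{Part (2), easy direction.} If there is a reset-free periodic orbit, it is itself an orbit with zero resets.

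\emph{Part (2), converse.} Let $x(t)$ be an MHR orbit with finitely many resets, and let $t_1$ be a time after its last reset. On $[t_1,\infty)$, $x$ agrees with the NH solution starting from $x(t_1)$ at time $t_1$. By the Main Theorem and Lemma~\ref{lem:asymptotic}, $x(t)$ converges to the unique NH periodic orbit $p(t)$. We must show that $p$ is reset-free in the MHR sense, i.e.\ that $p$ stays in $\Ph$ and never crosses $\mathbb{L}$ from $\mathbb{D}$ into $\mathbb{S}$.

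First, $p(t)\in\Ph$: the set $\Ph$ is closed, $x(t)\in\Ph$ for all $t$, and $|x(t)-p(t)|\to0$, so $\operatorname{dist}(p(t+kT),\Ph)\to0$ as $k\to\infty$. By $T$-periodicity of $p$ this distance is constant in $k$, hence zero, so $p(t)\in\Ph$. Note also that $v\ge v_\mathrm{rest}-\gamma>0$ eventually, so the condition $v\ge0$ is automatic.

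Second, $p$ never enters $\mathbb{S}$, since $h_p(t)=\theta_p(t)-v_p(t)\ge0$ for all $t$. So $h_p$ never changes sign, and any contact of $p$ with $\mathbb{L}$ is a grazing, which by \eqref{eq:mhr3} does not trigger a reset. Hence $p$ is a reset-free MHR periodic orbit. This step is the main subtlety: one must use that the reset rule fires only on a sign change of $h$, not on mere contact, so that limiting orbits touching $\mathbb{L}$ still qualify as reset-free.

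\emph{Convergence claim.} If an rf periodic orbit exists, it is $p$ by Part (1). Every orbit with finitely many resets is eventually an NH solution, so it tends to $p$ by Lemma~\ref{lem:asymptotic}.
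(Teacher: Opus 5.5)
Your proposal is correct and follows essentially the same route as the paper. Uniqueness and localization in $\mathbb{P}\cap P_\mathrm{basic}$ come directly from the Main Theorem on the NH system, and for the converse in (2) the paper also uses the closedness of $\mathbb{P}$ together with global attraction to conclude that the limit of a finite-reset orbit is the NH periodic orbit lying in $\mathbb{P}$, hence a reset-free MHR orbit. Your explicit periodicity argument for $p(t)\in\mathbb{P}$, and your remark that contacts with $\mathbb{L}$ are grazings that trigger no reset, spell out what the paper compresses into ``contained in $\mathbb{P}$, i.e., a reset-free periodic orbit.''
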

\begin{proof}
\emph{Ad (1)} It is evident that every reset-free periodic orbit
of the MHR system (if exists) originates from a periodic orbit
of the NH system, but, by Main Theorem on the NH system,
the NH system has a unique periodic orbit. Thus the MHR system 
has at most one reset-free periodic orbit. 
In addition, since the unique periodic orbit of the NH system 
is contained in $P_\mathrm{basic}$, the reset-free periodic 
orbit of the MHR system (if exists) is contained in
$\Ph\cap P_\mathrm{basic}$.\vspace{0.5mm}

\noindent\emph{Ad (2)} Note that a reset-free periodic orbit
is obviously an orbit with finite number of resets (namely $0$).
On the other hand, since the phase space $\Ph$ is closed 
in $\R^2$, the attractor of each orbit with finite number of resets
is, by Main Theorem on the NH system, the unique 
periodic orbit of the NH system contained in $\Ph$, i.e.,
a reset-free periodic orbit of the MHR system.
This completes the proof.
\end{proof}

To sum up, there are only two possibilities: 
\begin{enumerate}
\item 
either the unique periodic orbit of the NH system
is contained in the phase space $\Ph$,
and, in consequence, it becomes 
the unique reset-free periodic orbit of the MHR system
\item
or the unique periodic orbit of the NH system
is not contained in the phase space $\Ph$
and, in consequence, it is ``destroyed'' by
the reset phenomenon, which means that
there are no reset-free periodic orbits 
in the MHR system and all orbits have an infinite number of resets.
\end{enumerate}
However, if the MHR system has exactly one reset-free periodic orbit,
then orbits with an infinite number of resets may or may not occur.
Even more interestingly, numerical simulations presented
in Section~\ref{sec:discussion} suggest that in the second case 
at least one reset-including periodic orbit must arise 
in the MHR system, whereas in the first case 
such an orbit may or may not arise.

\subsection{The MHR system with no resets}
\label{subsec:absencereset}
In this subsection, we will give a sufficient condition 
for the complete absence of resets in the MHR system. 
Although such a situation is not typical in hybrid systems, 
for the sake of completeness let us deal with
this borderline case. Clearly, the occurrence of resets
in the system depends directly on the direction of 
the non-autonomous vector field at points on the 
diagonal. Namely, it is easy to check that 
there is no reset in the MHR system if 
at all points on the diagonal
for all $t$
the following condition is satisfied
\begin{equation}\label{eq:diag}
\langle F\mid n_D\rangle=
v-v_{\mathrm{rest}}-I(t)+\big(f(v)-v\big)/\tau
\ge0,
\end{equation}
where $n_D=[-1,1]$. Based on~\eqref{eq:diag}
we can provide the simple sufficient condition
for no resets in the system.
\begin{proposition}\label{prop:noresetcond}
Assume that the~hybrid system of the non-autonomous ODE 
\eqref{eq:mhr1}-\eqref{eq:mhr3}
satisfies \eqref{eq:f}, \eqref{eq:input}. 
If 
\[
a+\exp(-bc)\ge\tau(v_{\mathrm{rest}}+A)
\]
then there is no reset in the hybrid system 
under consideration.
\end{proposition}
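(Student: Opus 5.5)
Proving the proposition comes down to verifying the diagonal condition \eqref{eq:diag} at every point $(v,v)$ with $v\ge0$ and for every $t$, and then explaining why this inequality excludes resets. A reset requires $h=\theta-v$ to change sign from positive to negative along a solution of the NH system. The condition $\langle F\mid n_D\rangle\ge0$ says precisely that $\dot h=\dot\theta-\dot v\ge0$ whenever $h=0$.

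The algebraic step is to bound the expression
\[
G(t,v)=v-v_{\mathrm{rest}}-I(t)+\big(f(v)-v\big)/\tau .
\]
Since $I(t)\le A$ by \eqref{eq:input}, we have $G(t,v)\ge v(1-1/\tau)+f(v)/\tau-(v_{\mathrm{rest}}+A)$. Because $\tau>1$, the term $v(1-1/\tau)$ is nonnegative for $v\ge0$. Since $f$ is increasing, $f(v)\ge f(0)=a+\exp(-bc)$. Hence
\[
G(t,v)\ge \big(a+\exp(-bc)\big)/\tau-(v_{\mathrm{rest}}+A)\ge0
\]
by the hypothesis, and \eqref{eq:diag} holds on the whole threshold line $\mathbb{L}$ for all $t$.

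The remaining step, and the only genuinely delicate one, is turning $\dot h\ge0$ on $\{h=0\}$ into the statement that $h$ never changes sign from positive to negative. A non-strict inequality together with the discontinuous input means one cannot argue by naive transversality. I would handle this with Proposition~\ref{prop:posinv}, applied to the closed convex half-plane $\{\theta\ge v\}$, whose outer normal at boundary points is $n_D$ (up to a positive multiple). Condition (2) there requires $\langle F\mid n_D\rangle\le0$ with the outer normal pointing into $\mathbb{S}$. So I would take the outer normal $[1,-1]$, for which $\langle F\mid [1,-1]\rangle=-G\le0$. Thus $\{\theta\ge v\}$ is positively invariant for the NH system.

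Next, combine this with the fact that trajectories cannot leave $\Ph$ through the $\theta$-axis, since $\dot v=v_{\mathrm{rest}}+I(t)>0$ at $v=0$. Every NH solution starting in $\Ph$ therefore stays in $\{\theta\ge v\}$ for all later times, so $h(s)\ge0$ for $s\ge t_0$. In particular $h$ never changes sign on the threshold line; contacts, if any, are grazings, which by \eqref{eq:mhr3} do not trigger a reset. Hence the hybrid trajectory coincides with the NH trajectory and experiences no reset.

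To justify invoking Nagumo-type invariance with a merely piecewise continuous $I$, I would note that uniqueness holds by the Carathéodory/Filippov argument of Section~\ref{sec:prel}. Alternatively, one can apply Proposition~\ref{prop:posinv} on each interval of continuity of $I$ and concatenate, using the semigroup property of $\psi$.
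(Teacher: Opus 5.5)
Your algebraic check of \eqref{eq:diag} is the paper's entire proof. The paper writes $(\tau-1)v+f(v)\ge f(0)=a+\exp(-bc)\ge\tau(v_{\mathrm{rest}}+A)\ge\tau(v_{\mathrm{rest}}+I(t))$ and stops, treating the passage from \eqref{eq:diag} to the absence of resets as evident. The extra step you add to justify that passage is, however, wrong as written. Applying Proposition~\ref{prop:posinv} to the half-plane $\{\theta\ge v\}$ requires $\langle F\mid[1,-1]\rangle=-G(t,v)\le0$ on the \emph{whole} line $\theta=v$, including $v<0$, but you verified it only for $v\ge0$. In fact $G(t,v)=v(1-1/\tau)+f(v)/\tau-v_{\mathrm{rest}}-I(t)\to-\infty$ as $v\to-\infty$ (because $\tau>1$). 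So the field points strictly out of the half-plane there, and your claim that $\{\theta\ge v\}$ is positively invariant for the NH system is false. A smaller slip: $n_D=[-1,1]$ is the inner normal of that half-plane, not the outer one, though you correct this in the next sentence.

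The repair uses only ingredients you already have. Apply Proposition~\ref{prop:posinv} directly to the closed convex set $\Ph=\{\theta\ge v\ge0\}$, on each interval of continuity of $I$ as you suggest. On the $\theta$-axis, $\langle F\mid n_L\rangle=-(v_{\mathrm{rest}}+I(t))<0$. On the diagonal ray, the outer normal $[1,-1]$ gives $-G\le0$. At the origin every outer normal is a nonnegative combination of these two, so the condition holds on all of $\partial\Ph$.

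A simpler route avoids Nagumo-type regularity issues entirely. Set $h=\theta-v$; it satisfies $\dot h=G(t,v)-h/\tau$ wherever $v$ is differentiable. Your estimate actually gives $G(t,v)\ge0$ for every $v\ge0$, not just on the diagonal, and $v(t)\ge0$ along every solution starting in $\Ph$. Hence $e^{t/\tau}h(t)$ is nondecreasing, so $h(t)\ge e^{(t_0-t)/\tau}h(t_0)\ge0$ for all $t\ge t_0$. Thus $h$ can never pass from positive to negative values, which is exactly the absence of resets.
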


\begin{proof}
Observe that~\eqref{eq:diag} is equivalent to
$(\tau-1)v+f(v)\ge\tau(v_{\mathrm{rest}}+I(t))$.
Now, by assumption,
\[
(\tau-1)v+f(v)\ge
f(0)=a+\exp(-bc)\ge
\tau(v_{\mathrm{rest}}+A)\ge
\tau(v_{\mathrm{rest}}+I(t)),
\]
which completes the proof.
\end{proof}

\subsection{Resets on the diagonal in the MHR system}
\label{subsec:diagreset}
Note that the condition~\eqref{eq:diag}
from the previous subsection also allows for a more detailed 
analysis of which parts of the diagonal $v=\theta$
cannot have resets and in which they may occur.
The details of this analysis are set out below
and in Figure~\ref{fig:reset}.
\begin{proposition}\label{prop:noreset}
Assume that $f(v)\ge v$ for $v\ge0$, i.e.,
the nullcline $\theta=f(v)$ lies above
the diagonal.
Then the MHR system has no resets 
at all points on the diagonal
$v=\theta$ for which $v\ge v_{\mathrm{rest}}+A$.
In consequence, resets in the MHR system
may appear only at points on the diagonal 
for which $0\le v< v_{\mathrm{rest}}+A$.
Moreover, assuming in addition that 
the input $I(t)=0$ at time $t$
there are no resets at time $t$ at the points 
on the diagonal satisfying
$v_{\mathrm{rest}}\le v\le v_{\mathrm{rest}}+A$.
\end{proposition}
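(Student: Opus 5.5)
The plan is to use the pointwise sign condition \eqref{eq:diag} on the diagonal, checked separately on each of the two $v$-ranges in the statement. A reset at time $t$ at a point $(v,v)$ of $\mathbb{L}$ requires $h(s)=\theta(s)-v(s)$ to change sign from positive to negative at $s=t$. The derivative of $h$ along the NH flow at such a point is exactly $\langle F\mid n_D\rangle$ with $n_D=[-1,1]$, namely
\[
\dot h = F_2-F_1 = v-v_{\mathrm{rest}}-I(t)+\big(f(v)-v\big)/\tau .
\]
If this quantity is strictly positive, $h$ is increasing through zero, so trajectories cross from $\mathbb{S}$ into $\mathbb{D}$ (or start on $\mathbb{L}$ and move into $\mathbb{D}$). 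In that case $h$ does not change sign from positive to negative, and no reset occurs.

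\emph{First claim.} Take a diagonal point with $v\ge v_{\mathrm{rest}}+A$. Using $I(t)\le A$ from \eqref{eq:input}, we get $v-v_{\mathrm{rest}}-I(t)\ge v-v_{\mathrm{rest}}-A\ge 0$. The hypothesis $f(v)\ge v$ gives $(f(v)-v)/\tau\ge0$, so $\dot h\ge0$. For \eqref{eq:f}, $f(v)>v$ actually holds wherever $f(v)=v$ fails, and equality can only occur at isolated points, since $f(v)-v$ is strictly convex and so has at most two zeros. Also, $v-v_{\mathrm{rest}}-I(t)>0$ unless $v=v_{\mathrm{rest}}+A$ and $I(t)=A$.

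\emph{Second claim.} Now suppose $I(t)=0$ and $v_{\mathrm{rest}}\le v\le v_{\mathrm{rest}}+A$. Then $v-v_{\mathrm{rest}}-I(t)=v-v_{\mathrm{rest}}\ge0$, and again $(f(v)-v)/\tau\ge0$, so $\dot h\ge0$. The "in consequence" statement, that resets can occur only for $0\le v<v_{\mathrm{rest}}+A$, follows at once from the first claim, because $\mathbb{L}$ lies in $v\ge0$.

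The main obstacle is the degenerate case $\dot h=0$. This happens when both nonnegative terms vanish, for instance at $v=v_{\mathrm{rest}}+A$ with $I(t)=A$ and $f(v)=v$ there. At such a point the first-order argument does not exclude a sign change of $h$. One route is to interpret the statement as the paper does in Subsection~\ref{subsec:absencereset}, where \eqref{eq:diag} with $\ge$ is taken as the no-reset criterion; that is, to rely on Proposition~\ref{prop:posinv} applied locally. Concretely, the closed half-plane $\{\theta\ge v\}$, intersected with the relevant range of $v$ (and, for the second claim, with the time intervals where $I=0$), behaves as a positively invariant region along that part of its boundary. Positive invariance of the closed half-plane (in the sense of Nagumo) there rules out entering $\mathbb{S}$, and hence rules out a sign change of $h$ at those diagonal points.

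For a fully rigorous treatment I would first try to apply Nagumo locally on a neighbourhood of the boundary piece, observing that the condition $\langle F\mid n_D\rangle\ge0$ holds on that piece. A local trajectory starting on it then cannot enter $\mathbb{S}$, and so no reset is triggered. The remaining subtlety is that \eqref{eq:diag} holds only on part of the diagonal, so Nagumo's theorem cannot be applied globally. I would handle this by either
\begin{enumerate}
    \item using a second-order expansion at the finitely many degenerate points, or
    \item noting that entering $\mathbb{S}$ through a point of the good boundary piece would need $\dot h<0$ at some nearby boundary time, which contradicts \eqref{eq:diag}.
\end{enumerate}
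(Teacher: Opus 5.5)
Your argument is the same as the paper's. The paper shows that $\langle F\mid n_D\rangle = v-v_{\mathrm{rest}}-I(t)+(f(v)-v)/\tau\ge0$ on the diagonal for $v\ge v_{\mathrm{rest}}+A$ (using $I\le A$ and $f(v)\ge v$), and likewise for $v_{\mathrm{rest}}\le v\le v_{\mathrm{rest}}+A$ when $I(t)=0$, then takes \eqref{eq:diag} as the no-reset criterion, exactly as in your first paragraphs.

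The paper never treats the degenerate case $\dot h=0$, so that part of your discussion goes beyond it. If you pursue it, your option (2) does not work as phrased, because at nearby times the trajectory is off the diagonal, where \eqref{eq:diag} says nothing directly. A better tool is the identity $\frac{d}{dt}\bigl(h\,\ue^{t/\tau}\bigr)=\ue^{t/\tau}\bigl[(f(v)-v)/\tau+v-v_{\mathrm{rest}}-I(t)\bigr]$, which holds along the whole NH trajectory and not only on the diagonal.
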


\begin{proof} 
If $f(v)\ge v$ and $v\ge v_{\mathrm{rest}}+A$
then $\langle F\mid n_D\rangle\ge0$
and we are done. Similarly, if $I(t)=0$
then $v_{\mathrm{rest}}\le v\le v_{\mathrm{rest}}+A$
implies $\langle F\mid n_D\rangle\ge0$,
which is the desired conclusion.
\end{proof}

\begin{figure}[htbp]
  \centering
  \begin{tikzpicture}[scale=1.2]
    \def\vr{1}
    \def\A{1.5}
    \pgfmathsetmacro{\vra}{\vr+\A}

    \draw[->]
      (-0.8,0) -- (4.2,0)
      node[right] {$v$};

    \draw[->]
      (0,-0.7) -- (0,4.2)
      node[above] {$\theta$};

    \fill[cyan!30]
      (0,0) -- (0,4) -- (4,4) -- (4,0) -- cycle;

    \fill[white]
      (0,0) -- (4,4) -- (4,0) -- cycle;

    \fill[cyan!30]
      (0,0) -- (4,4) -- (0,4) -- cycle;

    \draw[dashed]
      (0,0) -- (4,4)
      node[above right] {$\theta=v$};

    \draw[very thick, blue, dotted]
      (0,0) -- (\vr,\vr);

    \draw[very thick, red, dashed]
      (\vr,\vr) -- (\vra,\vra);

    \draw[very thick, red]
      (\vra,\vra) -- (4,4);

    \draw[thick, dashed]
      (\vr,0) -- (\vr,4);

    \draw[thick, dashed]
      (\vra,0) -- (\vra,4);

    \filldraw[black]
      (\vr,0) circle (1.5pt)
      node[below]
      {$\phantom{AA}v=v_\mathrm{rest}\phantom{I}$};

    \filldraw[black]
      (\vra,0) circle (1.5pt)
      node[below]
      {$\phantom{AAA}v=v_\mathrm{rest}+A$};
  \end{tikzpicture}

  \caption{Reset analysis of the diagonal for
    the MHR system satisfying $f(v)\ge v$: solid red line =
    no reset at all, dashed red line = no reset for the input
    $I(t)=0$ and possible resets for $I(t)>0$, dotted blue line
    = possible resets. Phase space in blue.}
  \label{fig:reset}
\end{figure}

\subsection{Four types of positively invariant globally attracting 
regions (PIGARs) in the MHR system}

We say that a compact subset $Q$ of the phase space $\Ph$
is a \emph{positively invariant globally attracting region}
(\emph{PIGAR} for short) if it is positively invariant and
all orbits of the system converge towards $Q$, i.e.,
for every arbitrarily small $\gamma>0$ 
and every initial condition
$(v_0,\theta_0,t_0)$ from the extended phase space
there is $\overline{t}\ge t_0$ such that 
for all $t\ge\overline{t}$ we have
\[
\dist\big((v(t),\theta(t)),Q\big)<\gamma
\]
where $(v(t),\theta(t))$ is an orbit corresponding
to this initial condition.
Roughly speaking, a PIGAR is a compact subset 
of the phase space in which non-trivial, 
interesting dynamics is concentrated.
Below using the results concerning the NH system
from the previous section we will show
that some natural polygons (rectangles or pentagons,
see Figure~\ref{fig:pigars})
form PIGARs for the MHR system. 

\begin{figure}[htbp]
  \def\hAdd{0.8}
  \def\commonScale{0.6}
  \def\lblScale{1.0}
  \def\diagScale{1.0}

  \centering

  \begin{minipage}{0.3\textwidth}
    \centering
    \scriptsize
    \textbf{Case 1: $f(0) > v_{\mathrm{rest}} + A$} \\

    \begin{tikzpicture}[
      scale=\commonScale,
      dashed line/.style={dashed, thin, gray},
      dot/.style={circle, fill=black, inner sep=1pt}
    ]
      \def\vRA{3.0}
      \def\vRest{0.8}

      \pgfmathsetmacro{\a}{2.6}
      \pgfmathsetmacro{\b}{0.3}
      \pgfmathsetmacro{\c}{0.2}

      \pgfmathsetmacro{\yZero}{\a + exp(\b*(0 - \c))}
      \pgfmathsetmacro{\yR}{\a + exp(\b*(\vRest - \c))}
      \pgfmathsetmacro{\yRA}{\a + exp(\b*(\vRA - \c))}

      \begin{scope}[on background layer]
        \fill[blue!5]
          (0,0) -- (0,6.5) -- (5,6.5) -- (5,5) -- cycle;
      \end{scope}

      \begin{scope}
        \fill[orange!35]
          (\vRest,\yR) --
          (\vRA,\yR) --
          (\vRA,\yRA) --
          (\vRest,\yRA) --
          cycle;

        \draw[brown!60!black, ultra thick]
          (\vRest,\yR) --
          (\vRA,\yR) --
          (\vRA,\yRA) --
          (\vRest,\yRA) --
          cycle;

        \draw[dashed, thin, black!70]
          (0,\vRA) -- (\vRA,\vRA);

        \node[below left, scale=0.8]
          at (\vRest,\yR) {$X_1$};

        \node[below right, scale=0.8]
          at (\vRA,\yR) {$X_2$};

        \node[above right, scale=0.8]
          at (\vRA,\yRA) {$X_3$};

        \node[above left, scale=0.8]
          at (\vRest,\yRA) {$X_4$};
      \end{scope}

      \draw[->, >=Stealth]
        (-0.5,0) -- (5,0)
        node[right] {\scriptsize $v$};

      \draw[->, >=Stealth]
        (0,-0.5) -- (0,6.5)
        node[above] {\scriptsize $\theta$};

      \draw[dashed, black, thin]
        (0,0) -- (4.2,4.2)
        node[right, scale=\diagScale] {$\theta=v$};

      \draw[
        domain=-0.2:4.0,
        smooth,
        variable=\v,
        green!50!black,
        thick
      ]
        plot ({\v}, {\a + exp(\b*(\v - \c))})
        node[right, scale=\diagScale] {$\theta=f(v)$};

      \draw[dashed line]
        (0,\yZero) -- (0,0)
        node[below, black, scale=\lblScale] {$0$};

      \draw[dashed line]
        (\vRest,\yR) -- (\vRest,0)
        node[
          below,
          black,
          scale=\lblScale,
          yshift=-2pt
        ] {$v_{\mathrm{rest}}$};

      \draw[dashed line]
        (\vRA,\yRA) -- (\vRA,0)
        node[below, black, scale=\lblScale]
        {$v_{\mathrm{rest}}+A$};

      \node[left, scale=\lblScale]
        at (0,\vRA)
        {$v_{\mathrm{rest}}+A$};

      \draw[dashed line]
        (\vRest,\yR) -- (0,\yR)
        node[left, scale=\lblScale, black]
        {$f(v_{\mathrm{rest}})$};

      \draw[dashed line]
        (\vRA,\yRA) -- (0,\yRA)
        node[left, scale=\lblScale, black]
        {$f(v_{\mathrm{rest}}+A)$};

      \node[dot] at (\vRest,\yR) {};
      \node[dot] at (\vRA,\yRA) {};
    \end{tikzpicture}
  \end{minipage}%
  \hspace{0.1cm}
  \begin{minipage}{0.3\textwidth}
    \centering
    \scriptsize
    \textbf{\hspace{8pt} Case 2 and 3:
      $f(0) \le v_{\mathrm{rest}} + A
      \le f(v_{\mathrm{rest}}+A)$} \\

    \begin{tikzpicture}[
      scale=\commonScale,
      dashed line/.style={dashed, thin, gray},
      dot/.style={circle, fill=black, inner sep=1pt}
    ]
      \pgfmathsetmacro{\a}{1.5}
      \pgfmathsetmacro{\b}{0.4}
      \pgfmathsetmacro{\c}{0.5}

      \def\vRA{3.5}
      \def\hAddCaseTwo{0.8}

      \pgfmathsetmacro{\yZero}{\a + exp(\b*(0 - \c))}
      \pgfmathsetmacro{\yRA}{\a + exp(\b*(\vRA - \c))}

      \begin{scope}[on background layer]
        \fill[blue!5]
          (0,0) -- (0,6.5) -- (5,6.5) -- (5,5) -- cycle;
      \end{scope}

      \begin{scope}
        \coordinate (Y1) at (0,\yZero);
        \coordinate (Y2) at (\yZero,\yZero);
        \coordinate (Y3) at (\vRA,\vRA);
        \coordinate (Y4) at (\vRA,\yRA);
        \coordinate (Y5) at (0,\yRA);
        \coordinate (Y4prime) at (\vRA,\yRA + \hAddCaseTwo);
        \coordinate (Y5prime) at (0,\yRA + \hAddCaseTwo);

        \fill[orange!35]
          (Y1) -- (Y2) -- (Y3) -- (Y4) -- (Y5) -- cycle;

        \draw[brown!60!black, ultra thick]
          (Y1) -- (Y2) -- (Y3) -- (Y4) -- (Y5) -- cycle;

        \filldraw[fill=orange!10, dotted, thick]
          (Y5) rectangle (Y4prime);

        \draw[dashed, thin, black!70]
          (0,\vRA) -- (\vRA,\vRA);

        \node[below right, scale=0.8]
          at (Y1) {$Y_1$};

        \node[below right, scale=0.8]
          at (Y2) {$Y_2$};

        \node[below right, scale=0.8]
          at (Y3) {$Y_3$};

        \node[above right, scale=0.8]
          at (Y4) {$Y_4$};

        \node[above right, scale=0.8]
          at (Y5) {$Y_5$};

        \node[above right, scale=0.8]
          at (Y4prime) {$Y_4'$};

        \node[above right, scale=0.8]
          at (Y5prime) {$Y_5'$};
      \end{scope}

      \draw[->, >=Stealth]
        (-0.5,0) -- (5,0)
        node[right] {\scriptsize $v$};

      \draw[->, >=Stealth]
        (0,-0.5) -- (0,6.5)
        node[above] {\scriptsize $\theta$};

      \draw[dashed, black, thin]
        (0,0) -- (4.2,4.2)
        node[right, scale=\diagScale] {$\theta=v$};

      \draw[
        domain=-0.2:4.2,
        smooth,
        variable=\v,
        green!50!black,
        thick
      ]
        plot ({\v}, {\a + exp(\b*(\v - \c))})
        node[right, scale=\diagScale] {$\theta=f(v)$};

      \draw[dashed line]
        (0,\yZero) -- (0,0)
        node[below, black, scale=\lblScale] {$0$};

      \draw[dashed line]
        (\vRA,\yRA) -- (\vRA,0)
        node[below, black, scale=\lblScale]
        {$v_{\mathrm{rest}}+A$};

      \node[left, scale=\lblScale]
        at (0,\yZero)
        {$f(0)$};

      \node[left, scale=\lblScale]
        at (0,\vRA)
        {$v_{\mathrm{rest}}+A$};

      \node[left, scale=\lblScale]
        at (0,\yRA)
        {$f(v_{\mathrm{rest}}+A)$};

      \node[left, scale=\lblScale]
        at (0,\yRA + \hAddCaseTwo)
        {$f(v_{\mathrm{rest}}+A)+k$};

      \node[dot] at (0,\yZero) {};
      \node[dot] at (\vRA,\yRA) {};
    \end{tikzpicture}
  \end{minipage}%
  \hspace{0.5cm}
  \begin{minipage}{0.3\textwidth}
    \centering
    \scriptsize
    \textbf{\hspace{7pt} Case 4:
      $v_{\mathrm{rest}} + A
      > f(v_{\mathrm{rest}} + A)$} \\

    \begin{tikzpicture}[
      scale=\commonScale,
      dashed line/.style={dashed, thin, gray},
      dot/.style={circle, fill=black, inner sep=1pt}
    ]
      \pgfmathsetmacro{\a}{0.8}
      \pgfmathsetmacro{\b}{0.25}
      \pgfmathsetmacro{\c}{0.5}

      \def\vRA{4.2}

      \pgfmathsetmacro{\yZero}{\a + exp(\b*(0 - \c))}
      \pgfmathsetmacro{\yRA}{\a + exp(\b*(\vRA - \c))}
      \pgfmathsetmacro{\yTop}{\vRA + \hAdd}

      \begin{scope}[on background layer]
        \fill[blue!5]
          (0,0) -- (0,6.5) -- (5.5,6.5) -- (5.5,5.5) -- cycle;
      \end{scope}

      \begin{scope}
        \coordinate (A) at (0,\yZero);
        \coordinate (B) at (\yZero,\yZero);
        \coordinate (C) at (\vRA,\vRA);
        \coordinate (D) at (\vRA,\yTop);
        \coordinate (E) at (0,\yTop);

        \fill[orange!35]
          (A) -- (B) -- (C) -- (D) -- (E) -- cycle;

        \draw[brown!60!black, ultra thick]
          (A) -- (B) -- (C) -- (D) -- (E) -- cycle;

        \draw[dashed, thin, black!70]
          (0,\vRA) -- (\vRA,\vRA);

        \node[below right, scale=0.8]
          at (A) {$Z_1$};

        \node[below right, scale=0.8]
          at (B) {$Z_2$};

        \node[below right, scale=0.8]
          at (C) {$Z_3$};

        \node[above right, scale=0.8]
          at (D) {$Z_4$};

        \node[above right, scale=0.8]
          at (E) {$Z_5$};
      \end{scope}

      \draw[->, >=Stealth]
        (-0.5,0) -- (5.5,0)
        node[right] {\scriptsize $v$};

      \draw[->, >=Stealth]
        (0,-0.5) -- (0,6.5)
        node[above] {\scriptsize $\theta$};

      \draw[dashed, black, thin]
        (0,0) -- (5.0,5.0)
        node[
          right,
          scale=\diagScale,
          xshift=-7pt,
          yshift=-7pt
        ] {$\theta=v$};

      \draw[
        domain=-0.2:5.0,
        smooth,
        variable=\v,
        green!50!black,
        thick
      ]
        plot ({\v}, {\a + exp(\b*(\v - \c))})
        node[
          below right,
          scale=\diagScale,
          xshift=-14pt,
          yshift=-7pt
        ] {$\theta=f(v)$};

      \draw[dashed line]
        (0,\yZero) -- (0,0)
        node[below, black, scale=\lblScale] {$0$};

      \draw[dashed line]
        (\vRA,\vRA) -- (\vRA,0)
        node[below, black, scale=\lblScale]
        {$v_{\mathrm{rest}}+A$};

      \node[left, scale=\lblScale]
        at (0,\yZero)
        {$f(0)$};

      \node[left, scale=\lblScale]
        at (0,\vRA)
        {$v_{\mathrm{rest}}+A$};

      \node[left, scale=\lblScale]
        at (0,\yTop)
        {$v_{\mathrm{rest}}+A+\Delta$};

      \draw[dashed line]
        (\vRA,\yRA) -- (0,\yRA)
        node[left, scale=\lblScale, black]
        {$f(v_{\mathrm{rest}}+A)$};

      \node[dot] at (0,\yZero) {};
      \node[dot] at (\vRA,\yRA) {};
    \end{tikzpicture}
  \end{minipage}

  \vspace{0.1cm}

  \caption{Four types of PIGARs (in orange) in the MHR system
    depending on the system parameters.
    The four cases are mutually exclusive and complementary.
    Cases 2 and 3 differ in the sign of the number
    $k=\Delta-f(v_{\mathrm{rest}}+A)+v_{\mathrm{rest}}+A$
    (positive or nonpositive).}
  \label{fig:pigars}
\end{figure}
However, which type of PIGAR actually 
appears in the MHR system depends on two factors: 
\begin{itemize}
    \item the position of rectangle $P_\mathrm{basic}$ 
    (introduced in Section~\ref{sec:dynnh}) relative to the diagonal 
    $\theta=v$ (whether the diagonal intersects it 
    and exactly how),
    \item the value $\Delta$ of the jump in the reset condition; 
    more precisely, on the value of the number 
    $$k=\Delta-f(v_{\mathrm{rest}}+A)+v_{\mathrm{rest}}+A$$ 
    (whether positive or non-positive).
\end{itemize}
It turns out that in our analysis four different cases 
naturally arise corresponding to four types
of PIGARs for the MHR system 
(see Table~\ref{tab:cases}).
\begin{table}[h]
    \centering
    \caption{Cases corresponding to four types of PIGARs.} 
    \vspace{1mm}
    \label{tab:cases}
\begin{tabular}{l | c  }
Case 1 & $f(0) > v_{\mathrm{rest}} + A$\\\hline
Case 2 & $f(0) \le v_{\mathrm{rest}} + A 
            \le f(v_{\mathrm{rest}}+A)$ and $k\le0$\\\hline
Case 3 & $f(0) \le v_{\mathrm{rest}} + A 
            \le f(v_{\mathrm{rest}}+A)$ and $k>0$\\\hline
Case 4 & $v_{\mathrm{rest}} + A > f(v_{\mathrm{rest}} + A)$
\end{tabular}
\end{table}

\noindent Note that these four cases are mutually 
exclusive and complementary. 
In light of the four cases mentioned in Table~\ref{tab:cases}, 
let us consider four types of polygons:
$Q_1$, $Q_2$, $Q_3$ and $Q_4$, 
where the polygon $Q_i$ corresponds to Case~$i$ 
for $i=1,2,3,4$. Each type of polygon is
uniquely determined by its set of vertices with
coordinates listed in Table~\ref{tab:points}.
Namely,
$Q_1$: $X_1,X_2,X_3,X_4$ (rectangle);
$Q_2$: $Y_1,Y_2,Y_3,Y_4,Y_5$ (pentagon);
$Q_3$: $Y_1,Y_2,Y_3,Y_4',Y_5'$ (pentagon);
$Q_4$: $Z_1,Z_2,Z_3,Z_4,Z_5$ (pentagon).
Finally, for simplicity of notation, set
$Q=Q_i$ if Case $i$ holds.
Now we are ready to formulate our main result
concerning PIGARs in the MHR system.

\begin{table}[h]
    \centering
    \caption{Coordinates of vertices of four types of 
    polygons.} 
    \vspace{1mm}
    \label{tab:points}
    \resizebox{\textwidth}{!}{
\begin{tabular}{l | l | l | l | l | l c }
$Q_1$ & 
$X_1=(v_{\mathrm{rest}} , f(v_{\mathrm{rest}}))$ & 
$X_2=(v_{\mathrm{rest}}+A , f(v_{\mathrm{rest}}))$ & 
$X_3=(v_{\mathrm{rest}}+A,f(v_{\mathrm{rest}}+A))$ & 
$X_4=(v_{\mathrm{rest}},f(v_{\mathrm{rest}}+A))$  & \\\hline
$Q_2$ & $Y_1=(0,f(0))$ & 
$Y_2=(f(0),f(0))$ & 
$Y_3=(v_{\mathrm{rest}}+A,v_{\mathrm{rest}}+A)$ & 
$Y_4=(v_{\mathrm{rest}}+A,f(v_{\mathrm{rest}}+A))$ & 
$Y_5=(0,f(v_{\mathrm{rest}}+A))$ \\\hline
$Q_3$ & $Y_1=(0,f(0))$ & 
$Y_2=(f(0),f(0))$ & 
$Y_3=(v_{\mathrm{rest}}+A,v_{\mathrm{rest}}+A)$ & 
$Y_4'=(v_{\mathrm{rest}}+A,f(v_{\mathrm{rest}}+A)+k)$ & 
$Y_5'=(0,f(v_{\mathrm{rest}}+A)+k)$ \\\hline
$Q_4$ & $Z_1=(0,f(0))$ & 
$Z_2=(f(0),f(0))$ & 
$Z_3=(v_{\mathrm{rest}}+A,v_{\mathrm{rest}}+A)$ & 
$Z_4=(v_{\mathrm{rest}}+A,v_{\mathrm{rest}}+A+\Delta)$ & 
$Z_5=(0,v_{\mathrm{rest}}+A+\Delta)$ \\
\end{tabular}
}
\end{table}

\begin{theorem}[PIGARs in the MHR system]\label{thm:pigars}
The polygon $Q$ forms a PIGAR of the MHR system, i.e.,
the polygon $Q_i$ is a PIGAR if Case $i$ holds ($i=1,2,3,4$).
\end{theorem}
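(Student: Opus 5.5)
The plan has two parts: prove that $Q$ is positively invariant for the hybrid process $\phi$, and then prove that it is globally attracting. Both parts rest on the flow estimates already obtained for the NH system, namely Proposition~\ref{prop:naode}, Proposition~\ref{prop:attract} and Lemma~\ref{lem:ineq}, together with a separate check of the reset map.

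\emph{Positive invariance.} Between resets, trajectories of the MHR system are trajectories of the NH system. It therefore suffices to show two things: the NH vector field does not point outward on the non-diagonal edges of $Q$, and every reset issued from the diagonal part of $\partial Q$ lands back in $Q$.

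The edges are handled as follows.
\begin{itemize}
\item The left edge $v=0$ (Cases 2--4) and the left edge $v=v_{\mathrm{rest}}$ (Case 1) are fine, because $\langle F\mid n_L\rangle=v-v_{\mathrm{rest}}-I(t)\le 0$ there.
\item The right edge $v=v_{\mathrm{rest}}+A$ is fine, because $\langle F\mid n_R\rangle\le 0$.
\item On the bottom edge $\theta=f(v_{\mathrm{rest}})$ (Case 1) or $\theta=f(0)$ (Cases 2--4), Lemma~\ref{lem:ineq} and the monotonicity of $f$ give $\langle F\mid n_B\rangle\le 0$, since $v$ is at least the left endpoint there.
\item On the top edge, $\langle F\mid n_T\rangle=(f(v)-\theta)/\tau\le0$ because $\theta\ge f(v_{\mathrm{rest}}+A)\ge f(v)$ for $v\le v_{\mathrm{rest}}+A$. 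This applies to $\theta=f(v_{\mathrm{rest}}+A)$, to $f(v_{\mathrm{rest}}+A)+k$ with $k>0$, and in Case 4 to $v_{\mathrm{rest}}+A+\Delta>f(v_{\mathrm{rest}}+A)$.
\end{itemize}
In Case 1 the rectangle $Q_1=P_{\mathrm{basic}}$ lies strictly above the diagonal, since $f(v_{\mathrm{rest}})\ge f(0)>v_{\mathrm{rest}}+A$. Hence no reset can occur in it, and invariance is just Proposition~\ref{prop:naode}.

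In Cases 2--4 the diagonal edge $Y_2Y_3$ (respectively $Z_2Z_3$) is not an obstacle: leaving $Q$ through it means crossing into $\mathbb S$, and that is exactly when a reset fires. Proposition~\ref{prop:posinv} applies to convex sets, and $Q$ is convex, being an intersection of half-planes. Since only the non-diagonal edges matter, I would apply it to the convex polygon obtained by dropping the diagonal constraint, that is, to the rectangle $\{0\le v\le v_{\mathrm{rest}}+A,\ \theta_{\min}\le\theta\le\theta_{\max}\}$. The hybrid trajectory then stays in this rectangle intersected with $\Ph$, which is $Q$, until the first reset.

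The reset step needs more care. A reset occurs at a diagonal point $(u,u)$ with $f(0)\le u\le v_{\mathrm{rest}}+A$ and sends it to $(0,u+\Delta)$. This point lies on the left edge, and it lies in $Q$ provided $u+\Delta\le\theta_{\max}$.
\begin{itemize}
\item Case 4: $\theta_{\max}=v_{\mathrm{rest}}+A+\Delta$, so the bound is immediate.
\item Case 3: $\theta_{\max}=f(v_{\mathrm{rest}}+A)+k=v_{\mathrm{rest}}+A+\Delta$, which is again fine.
\item Case 2: $k\le0$ gives $u+\Delta\le v_{\mathrm{rest}}+A+\Delta\le f(v_{\mathrm{rest}}+A)$.
\end{itemize}
In all cases $u+\Delta\ge f(0)$, so the image lies above the bottom edge.

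\emph{Attraction.} Fix $\gamma>0$. I would use Proposition~\ref{prop:attract} between resets together with a priori bounds for the whole hybrid trajectory. The $v$-coordinate is handled by the explicit formula: resets only lower $v$ to $0$, so the NH upper estimate persists. The lower estimate holds in the form $v\ge0$, which is the left edge of $Q$ in Cases 2--4; in Case 1 there are no resets eventually, as shown below.

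For $\theta$, resets only increase it. The lower estimate therefore follows by comparison with the NH solution, which is a subsolution because $\dot\theta$ is monotone in $v$ and $v\ge0$ gives $\theta$ eventually above $f(0)-\gamma$. The top bound is the delicate point. After time $\bar t$ we have $v\le v_{\mathrm{rest}}+A+\gamma$, so between resets $\theta$ relaxes exponentially toward at most $f(v_{\mathrm{rest}}+A+\gamma)$. Resets occur only at diagonal points with $u\le v_{\mathrm{rest}}+A+\gamma$ and lift $\theta$ to at most $v_{\mathrm{rest}}+A+\gamma+\Delta$. Hence $\limsup\theta\le\max\{f(v_{\mathrm{rest}}+A),\,v_{\mathrm{rest}}+A+\Delta\}+O(\gamma)$, and this maximum is $\theta_{\max}$ in each case. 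To make the $\limsup$ argument rigorous, I would apply Lemma~\ref{lem:estim} on each inter-reset interval. The difficulty is that a trajectory might reset many times while $\theta$ sits above the target. To rule this out I would observe that every post-reset value is already below $\theta_{\max}+\gamma$, after which the top-edge invariance argument applies to the slightly enlarged polygon, which is also positively invariant by the same computation. Finally, in Case 1 the enlarged rectangle $P^{\delta,0}_{\delta,0}$ stays above the diagonal for small $\delta$, so eventually no resets occur and Proposition~\ref{prop:attract} finishes the proof.

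The main obstacle is the uniform top bound for $\theta$ under possibly infinitely many resets. I plan to resolve it through the invariance of the $\gamma$-enlarged polygons, which removes the need for a direct $\limsup$ argument: first show that every trajectory enters a fixed enlarged polygon in finite time, then let the enlargement shrink.
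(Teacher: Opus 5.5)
Your invariance argument is the paper's: the non-diagonal edges of $Q$ lie on an admissible rectangle ($P_\mathrm{basic}$, $P^{0,0}_{v_{\mathrm{rest}},0}$, $P^{0,k}_{v_{\mathrm{rest}},0}$ or $P^{0,m}_{v_{\mathrm{rest}},0}$), and a reset from $(u,u)$ with $f(0)\le u\le v_{\mathrm{rest}}+A$ lands at height at most $v_{\mathrm{rest}}+A+\Delta\le\theta_{\max}$. Your attraction argument is correct but takes a genuinely different route.

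The paper splits orbits by their number of resets. An orbit with finitely many resets converges to the reset-free periodic orbit, by Proposition~\ref{prop:prelMHR} and hence the Main Theorem on the NH system. An orbit with infinitely many resets is followed through the regions $W',\Gamma_1,\Gamma_2,\Gamma_3$, using the Triangle and Barrier Lemmas, until it enters $W'\subset Q$, where invariance traps it.

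You instead use uniform a priori bounds:
\begin{itemize}
\item $v$ is dominated by the NH voltage, because resets only lower it;
\item $\theta$ dominates the solution of $\dot w=(f(0)-w)/\tau$, because $v\ge 0$ and resets only raise $\theta$;
\item any reset occurring after these bounds hold lands at height at most $v_{\mathrm{rest}}+A+\gamma+\Delta$.
\end{itemize}
So every orbit enters a $\gamma$-enlarged positively invariant polygon in finite time, either at its first late reset or, if there is none, by Proposition~\ref{prop:attract}. These polygons shrink to $Q$ as $\gamma\to0$.

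What each route buys: yours avoids the finite/infinite-reset dichotomy, the existence of the reset-free periodic orbit and the Triangle Lemma, and makes Case~1 immediate. The paper's route gives the sharper fact that orbits with infinitely many resets actually enter $Q$, not just approach it. It also sets up the region decomposition reused in Remark~\ref{rem:regions} and Proposition~\ref{prop:est}.

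Three details need fixing when you write it out:
\begin{itemize}
\item Put the enlarged top edge at $\max\{f(v_{\mathrm{rest}}+A+\gamma),\,v_{\mathrm{rest}}+A+\gamma+\Delta\}$ (plus a small margin), not at $\theta_{\max}+\gamma$. The slope of $f$ may exceed $1$, so $\theta_{\max}+\gamma$ can lie below $f(v)$ for $v$ near $v_{\mathrm{rest}}+A+\gamma$, and the top-edge condition would fail.
\item Take the lower bound on $\theta$ from the $v\equiv 0$ comparison above, not from the NH solution with the same initial data. That NH solution is not ordered with the hybrid one: the hybrid orbit has smaller $v$ between resets but jumps up at resets.
\item In Case~1, deduce the absence of late resets directly from $v\le v_{\mathrm{rest}}+A+\gamma<f(0)-\gamma\le\theta$. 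You have not shown that hybrid orbits ever reach $P^{\delta,0}_{\delta,0}$, so that rectangle cannot be used for this step.
\end{itemize}
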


Before we present the proof of Theorem~\ref{thm:pigars}, 
we must show two auxiliary results: Triangle Lemma
and Barrier Lemma, which also seem to be 
interesting in their own right.

\begin{triang}\label{lem:triangle}
Consider the triangle $A_1A_2A_3$ with vertices 
\[
A_1=(0,0), \quad A_2=(0,f(0)), \quad A_3=(f(0),f(0)).
\]
Then every orbit of the MHR system starting 
in this triangle leaves it after a finite
(possibly zero) number of resets
and never returns to this triangle. 
Moreover, the number of resets is bounded by
\[
N \le \left\lfloor \frac{f(0)-\theta_0}{\Delta} \right\rfloor + 1,
\]
where $\theta_0$ is the initial value of $\theta$.
\end{triang}

\begin{proof}
We divide the proof into a few simple steps.\vspace{0.5mm}

\noindent\emph{Step 1: Monotonic growth of $\theta$.}
Inside the triangle $A_1A_2A_3$ we have $\theta \le f(0)$ and $v \ge 0$. 
Since $f$ is increasing, $\theta \le f(0) \le f(v)$ for all 
$v \in [0,f(0)]$.
Hence, in the interior of the triangle,
\(
\frac{d\theta}{dt} = \frac{f(v)-\theta}{\tau} > 0,
\)
so $\theta(t)$ is strictly increasing 
during continuous evolution.\vspace{0.5mm}

\noindent\emph{Step 2: Orbits without reset inside the triangle $A_1A_2A_3$
leave it through $\theta = f(0)$ 
level.}
Assume no reset in the triangle $A_1A_2A_3$ occurs
for some orbit of the MHR system. 
Hence this orbit can be treated
inside the triangle $A_1A_2A_3$
as an orbit of the non-hybrid NH system. 
But, by Main Theorem on the NH system, such an orbit
tends asymptotically to the rectangle $P_\mathrm{basic}$,
whose bottom side lies at the level $\theta=f(v_\mathrm{rest})>f(0)$.
Consequently, since our orbit has no resets inside the triangle
and the non-autonomous vector field $F$ points to the right along 
the $\theta$-axis, the orbit must leave the triangle $A_1A_2A_3$
through the $A_2A_3$ segment.
\vspace{0.5mm}

\noindent\emph{Step 3: Effect of resets.}
A reset occurs when $v=\theta$ and the solution
intersects the diagonal. At such a time,
\[
v(t^+) = 0, \qquad \theta(t^+) = \theta(t^-) + \Delta,
\]
with $\Delta>0$. Thus each reset increases $\theta$ by 
$\Delta$ and places the orbit on the $\theta$-axis.\vspace{0.5mm}

\noindent\emph{Step 4: Uniform bound on the number of resets.}
Since $\theta$ increases between resets (see Step 1),
after $k$ resets,
\[
\theta \ge \theta_0 + k\Delta.
\]
As long as the trajectory remains in the triangle, we must have $\theta \le f(0)$. Therefore,
\[
\theta_0 + k\Delta \le f(0),
\]
which implies
\[
k \le \frac{f(0)-\theta_0}{\Delta}.
\]
Hence the number of resets is bounded by
\[
N \le \left\lfloor \frac{f(0)-\theta_0}{\Delta} \right\rfloor + 1.
\]

\noindent\emph{Step 5: Exit from the triangle.}
By Step 4, as long as the trajectory remains inside the triangle, 
infinitely many resets are impossible. 
Therefore, after a finite number of resets, 
if the trajectory has not yet left the triangle, 
it evolves inside the triangle without further resets. 
Then Step 2 applies, and the trajectory must eventually 
leave the triangle through the upper side $A_2A_3$
at the level $\theta=f(0)$.\vspace{0.5mm}

\noindent\emph{Step 6: No returns to the triangle.}
Note that since the scalar product satisfies
\[
\langle F\mid [0,-1]\rangle=\big(\theta-f(v)\big)/\tau\le0
\]
for all points on the segment $A_2A_3$, no orbit can enter 
the triangle through these points.

This proves that every orbit leaves the triangle
$A_1A_2A_3$ after finitely many resets and never comes back.
\end{proof}

\begin{corollary}
Broadly speaking, Triangle Lemma states 
that the dynamics of our hybrid system in the $A_1A_2A_3$ 
triangle is extremely simple and transient.
Consequently, it can be generally disregarded 
when analyzing the MHR system.   
\end{corollary}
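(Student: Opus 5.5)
The plan is to turn the informal statement into two precise claims and derive both from Triangle Lemma. The first claim is that the set
\[
\Ph_0=\{(v,\theta)\in\Ph \mid \theta\ge f(0)\}
\]
is positively invariant for the hybrid process $\phi$. The second is that every orbit of the MHR system enters $\Ph_0$ in finite time, after at most $\lfloor (f(0)-\theta_0)/\Delta\rfloor+1$ resets. From these I will conclude that every periodic orbit (reset-free or reset-including), every $\omega$-limit point and every PIGAR-type attracting set can be searched for inside $\Ph_0$. This is the precise sense in which the triangle may be disregarded.

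First I observe that, because $\theta\ge v\ge 0$ in $\Ph$, the set $\Ph\setminus\Ph_0=\{\theta<f(0)\}\cap\Ph$ is exactly the triangle $A_1A_2A_3$ with its upper side $A_2A_3$ removed. The second claim is then literally Step~5 of Triangle Lemma, together with its bound on the number of resets.

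For positive invariance of $\Ph_0$ I check the two mechanisms of the hybrid evolution separately.
\begin{enumerate}
\item \emph{Continuous flow.} The only part of $\partial\Ph_0$ through which the flow could re-enter the triangle is the segment $A_2A_3$. There Step~6 gives $\langle F\mid[0,-1]\rangle=(f(0)-f(v))/\tau\le0$, with strict inequality for $v>0$ because $f$ is strictly increasing.
\item \emph{Endpoints of the segment.} At $A_2=(0,f(0))$ the inner product vanishes, so the sign argument is not enough. Instead I use $\dot v=v_{\mathrm{rest}}+I(t)>0$ at $v=0$, which moves the orbit instantly to $v>0$, where $\dot\theta>0$ on the level $\theta=f(0)$. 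At $A_3$ the orbit is on the diagonal, and the reset analysis below applies.
\item \emph{Resets.} A reset occurs only at a diagonal point $(\theta,\theta)$. If that point lies in $\Ph_0$, then $\theta\ge f(0)$, and the reset sends it to $(0,\theta+\Delta)$ with $\theta+\Delta>f(0)$, which is again in $\Ph_0$.
\end{enumerate}
Since $\Ph_0$ is closed and convex, one could invoke Proposition~\ref{prop:posinv} for the flow part. I will instead argue directly with the sign of $\dot\theta$, to avoid the nonsmooth corner at $A_2$ and the discontinuities of $I$.

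Finally I draw the consequences.
\begin{itemize}
\item A periodic orbit cannot meet the triangle $\{\theta<f(0)\}$. Otherwise it would return there after leaving, contradicting the ``never returns'' part of Triangle Lemma (equivalently, the positive invariance of $\Ph_0$).
\item The $\omega$-limit set of any orbit is contained in the closed set $\Ph_0$, because the orbit lies in $\Ph_0$ for all large $t$.
\item In proving Theorem~\ref{thm:pigars}, and in any further asymptotic analysis, it therefore suffices to work with initial conditions in $\Ph_0$. The transient stage has bounded duration and a uniformly bounded number of resets.
\end{itemize}

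The main obstacle I expect is the care needed at the boundary points. At $A_2$ the tangency forces the use of $\dot v>0$ rather than the inner product. At $A_3$ one must also handle the possibility of grazing, and check that the hybrid process, being only right-continuous, cannot jump into the triangle. Both points are settled by the explicit form of the reset map.
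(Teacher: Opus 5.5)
Your proposal is correct and follows the paper's reasoning: the corollary is only an informal reading of Triangle Lemma, and your two claims are its Step~5 (finite exit after at most $\lfloor (f(0)-\theta_0)/\Delta\rfloor+1$ resets) and its Step~6 (no return). The second claim, the positive invariance of $\{\theta\ge f(0)\}\cap\Ph$, is also what the paper records as part~(2) of the Barrier Lemma, which it proves from the explicit formula for $\theta$ rather than by your sign argument at $A_2$. The one extra assertion, a \emph{uniform} bound on the duration of the transient, is not given by the lemma (it gives only finiteness) and would need an estimate such as $v(t)\ge v_{\mathrm{rest}}(1-\ue^{-(t-s)})$ after each reset time $s$, though nothing in the corollary requires it.
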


\begin{figure}[htbp]
  \centering
  \begin{tikzpicture}[scale=2]
    \draw[->]
      (0,0) -- (2.2,0)
      node[right] {$v$};

    \draw[->]
      (0,0) -- (0,2.2)
      node[above] {$\theta$};

    \def\fzero{1}

    \coordinate (A) at (0,0);
    \coordinate (B) at (0,\fzero);
    \coordinate (C) at (\fzero,\fzero);

    \fill[gray!20]
      (A) -- (B) -- (C) -- cycle;

    \draw[thick]
      (A) -- (B) -- (C) -- cycle;

    \fill
      (A) circle (0.02)
      node[below left] {$A_1$};

    \fill
      (B) circle (0.02)
      node[left] {$A_2$};

    \fill
      (C) circle (0.02)
      node[right] {$A_3$};

    \draw[dashed]
      (0,0) -- (2,2)
      node[above right] {$v=\theta$};

    \draw[
      green!60!black,
      thick,
      domain=0:1.3
    ]
      plot (\x,{\fzero + 0.25*(exp(1.2*\x)-1)});

    \node[green!60!black]
      at (1.4,2.1)
      {$\theta=f(v)$};
  \end{tikzpicture}

  \caption{The triangle from Triangle Lemma
    with simple transient dynamics.}
\end{figure}

The following result states that both in the NH system
and in the MHR system
there are two natural barriers: vertical at $v=v_{\mathrm{rest}}+A$
for low voltages (lower than $v_{\mathrm{rest}}+A$)
and horizontal at $\theta=f(0)$ for high thresholds
(higher than $f(0)$). See Figure~\ref{fig:barriers}
for details. Precisely the following
estimations hold.

\begin{figure}[htbp]
  \centering

  \tikzset{
    axis/.style={thick, -{Stealth[scale=1.2]}},
    phase_space_above/.style={fill=blue!15, draw=none},
    phase_space_below/.style={fill=red!15, draw=none},
    boundary_line/.style={thick, black}
  }

  \begin{subfigure}[b]{0.48\textwidth}
    \centering
    \begin{tikzpicture}[scale=0.85]
      \fill[phase_space_below]
        (0,0) -- (4.5,4.5) -- (0,4.5) -- cycle;

      \node[
        align=center,
        font=\footnotesize\bfseries,
        text=darkred
      ] at (1.8,3.7) {forbidden for high\\thresholds};

      \fill[phase_space_above]
        (0,4.5) -- (4.5,4.5) -- (5.5,5.5) -- (0,5.5) -- cycle;

      \draw[axis]
        (-0.5,0) -- (6.0,0)
        node[right] {$v$};

      \draw[axis]
        (0,-0.5) -- (0,6.0)
        node[above] {$\theta$};

      \draw[boundary_line]
        (0,0) -- (5.5,5.5)
        node[right, black, font=\large\bfseries] {$\theta = v$};

      \draw[ultra thick, dashed, darkred]
        (0,4.5) -- (4.5,4.5)
        node[
          above left,
          pos=0.9,
          font=\footnotesize\bfseries,
          text=darkred
        ] {barrier $\boldsymbol{\theta = f(0)}$};

      \filldraw[black] (0,4.5) circle (2pt);
    \end{tikzpicture}
    \caption{horizontal barrier $\theta = f(0)$}
  \end{subfigure}
  \hfill
  \begin{subfigure}[b]{0.48\textwidth}
    \centering
    \begin{tikzpicture}[scale=0.85]
      \fill[phase_space_above]
        (0,0) -- (1.0,1.0) -- (1.0,5.2) -- (0,5.2) -- cycle;

      \fill[phase_space_below]
        (1.0,1.0) -- (5.2,5.2) -- (1.0,5.2) -- cycle;

      \draw[axis]
        (-0.5,0) -- (6.0,0)
        node[right] {$v$};

      \draw[axis]
        (0,-0.5) -- (0,6.0)
        node[above] {$\theta$};

      \draw[boundary_line]
        (0,0) -- (5.2,5.2)
        node[right, black] {$\theta = v$};

      \draw[thick, black, dashed]
        (1.0,0) -- (1.0,1.0);

      \draw[ultra thick, dashed, darkred]
        (1.0,1.0) -- (1.0,5.2)
        node[
          rotate=90,
          above left,
          pos=0.9,
          font=\footnotesize\bfseries,
          text=darkred
        ] {barrier $\boldsymbol{v = v_{\mathrm{rest}} + A}$};

      \node[
        align=center,
        font=\footnotesize\bfseries,
        text=darkred
      ] at (2.8,4.5) {forbidden for low\\voltages};

      \filldraw[black] (1.0,0) circle (2pt);
    \end{tikzpicture}
    \caption{vertical barrier
      $v = v_{\mathrm{rest}} + A$}
  \end{subfigure}

  \caption{Two barriers for threshold and
    voltage in the MHR system.}
  \label{fig:barriers}
\end{figure}

\begin{barrier}\label{prop:barrier}
Consider the solution of either the NH system
or the MHR system with the initial
condition $(v_0,\theta_0,t_0)$. Then
\begin{enumerate}
    \item if $v_0\le v_{\mathrm{rest}}+A$ then
    $v(t)\le v_{\mathrm{rest}}+A$,
    \item if $v_0\ge0$ and $\theta_0\ge f(0)$ then
    $\theta(t)\ge f(0)$ (note that the assumption $v_0\ge0$ is 
    always fulfilled in the MHR system).
\end{enumerate}  
\end{barrier}

\begin{proof}
Let us start with the NH system and
recall the general formulas for solutions.

\emph{Ad 1.} Since
\[
v(t)=v_\mathrm{rest}+
(v_0-v_\mathrm{rest})\ue^{t_0-t}+\ue^{-t}\int_{t_0}^t\!\!\! I(s)\ue^{s}ds,
\]
if $v_0\le v_{\mathrm{rest}}+A$ then
\[
v(t)\le v_\mathrm{rest}+A\ue^{t_0-t}
+A\ue^{-t}[\ue^{t}-\ue^{t_0}]=v_\mathrm{rest}+A.
\]

\emph{Ad 2.} Note that if $v_0\ge0$ then $v(t)\ge0$ for $t\ge t_0$
and, in consequence, $f(v(t))\ge f(0)$ for $t\ge t_0$.
Now, since 
\[
\theta(t)=
\ue^{(t_0-t)/\tau}\theta_0+
\ue^{-t/\tau}\frac1\tau
\int_{t_0}^t\!\! f\big(v(s)\big)\ue^{s/\tau}\,ds,
\]
if $v_0\ge0$ and $\theta_0\ge f(0)$ then
\[
\theta(t)\ge
\ue^{(t_0-t)/\tau}\theta_0+
f(0)\ue^{-t/\tau}[\ue^{t/\tau}-\ue^{t_0/\tau}]=
\ue^{(t_0-t)/\tau}[\theta_0-f(0)]+f(0)\ge f(0).
\]
However, the above estimations work also for 
the hybrid MHR system. Namely, if the condition
$v_0\le v_{\mathrm{rest}}+A$ (resp. $v_0\ge0$ and $\theta_0\ge f(0)$)
was satisfied before the reset then it will be also 
satisfied after the reset.
\end{proof}

\begin{proof}[Proof of Theorem~\ref{thm:pigars}]
We divide the proof into two parts
and each part into two steps.\vspace{1mm}

\noindent\textsc{Part I: positive invariance.} Here we prove 
that the polygon $Q$ is positively invariant.\vspace{1mm}

\noindent\textit{Step 1.}
Firstly, we analyze the behavior of the non-autonomous 
vector field $F$ on the boundary of the polygon $Q$,
where there is no reset phenomenon—that is, 
excluding the segment on the diagonal $\theta=v$, 
which we will examine separately in the second step.
It is easy to see that in each of the four cases, 
the considered part of the perimeter of the polygon $Q$
(i.e., the part without the diagonal) is contained in
the perimeter of some admissible rectangle $P$
from Proposition~\ref{prop:naode}, which guarantees that no orbit 
can leave $Q$ through the points on this part of the perimeter.
Namely, one can check that we should take 
in Case 1: $P=P_\mathrm{basic}$,
in Case 2: $P=P_{v_{\mathrm{rest}},0}^{0,0}$,
in Case 3: $P=P_{v_{\mathrm{rest}},0}^{0,k}$
and in Case 4: $P=P_{v_{\mathrm{rest}},0}^{0,m}$
with $m=v_{\mathrm{rest}}+A+\Delta-f(v_{\mathrm{rest}}+A)$\vspace{1mm}.
    
\noindent\textit{Step 2.} Secondly, let us analyze what happens 
on the segment of the polygon $Q$'s perimeter that lies 
on the diagonal $\theta=v$.
Since in Case 1 the polygon $Q$ is disjoint from the diagonal, 
we need to consider only the remaining three cases. 
In each of these cases, if a reset occurs then the point
from the intersection of $Q$ and the diagonal
(note that for such a  point we have $f(0)\le v\le v_{\mathrm{rest}}+A$)
is transferred from the diagonal to the $\theta$-axis 
while increasing the $\theta$ value by $\Delta$,
and is therefore transferred up to a maximum height of 
$\theta^\#=v_{\mathrm{rest}}+A+\Delta$ on the $\theta$-axis.
However, by the definition of $Q$, in each of the three cases, 
$Q$ contains a segment of the $\theta$-axis up to the height 
$\theta^*$ such that $\theta^*\ge\theta^\#$. Namely, in Case 2:
$\theta^*=f(v_{\mathrm{rest}}+A)=v_{\mathrm{rest}}+A+\Delta-k
\ge v_{\mathrm{rest}}+A+\Delta$
(since $k\le0$), in Case 3: 
$\theta^*=f(v_{\mathrm{rest}}+A)+k=v_{\mathrm{rest}}+A+\Delta$ 
(here $k>0$) and in Case 4: 
$\theta^*=v_{\mathrm{rest}}+A+\Delta$.\vspace{2mm}

\noindent\textsc{Part II: global attractor.} Now we prove
the following condition $(*)$: for every arbitrarily small 
$\gamma>0$ and every initial condition
$(v_0,\theta_0,t_0)$ from the extended phase space
there is $\overline{t}\ge t_0$ such that 
for all $t\ge\overline{t}$ we have
\(
\dist\big((v(t),\theta(t)),Q\big)<\gamma.
\)\vspace{1mm}

\noindent\textit{Step 1.}
First consider orbits with finite number of resets.
By Proposition 4.1,
such orbits converge to the attracting periodic orbit 
$\mathcal{O} \subset P_{\mathrm{basic}} \cap \mathbb{P} \subset Q$. 
Hence, condition $(*)$ is satisfied in this case.\vspace{1mm}

\noindent\textit{Step 2.}
Next consider orbits with infinite number of resets. 
Their behavior depends on the initial condition. 
To carry out a detailed analysis, we will need some 
additional notation. Let us define the numbers
\[
\tilde{v}:=v_{\mathrm{rest}}+A,
\qquad
\tilde{\theta}:=\max\{\tilde{v},f(\tilde{v})\}.
\]
and the sets
\[
W=\{(v,\theta)\mid 0\le v\le\tilde{v},\ f(0)\le\theta\le\tilde{\theta}\},
\qquad
W'=W\cap\mathbb P.
\]
Moreover, outside $W'$ we distinguish three regions
in the phase space $\mathbb P$:
\begin{align*}
\Gamma_1&=\{(v,\theta)\in\mathbb P:
0\le v\le\tilde{v},\ \theta>\tilde{\theta}\},\\[1mm]
\Gamma_2&=\{(v,\theta)\in\mathbb P:
0\le v<\tilde{v},\ \theta<f(0)\},\\[1mm]
\Gamma_3&=\{(v,\theta)\in\mathbb P:
\theta\ge v>\tilde{v}\}.
\end{align*}
The locations of all the above sets are shown in 
Figure~\ref{fig:regions} in two different situations: 
$f(0)>\tilde{v}$ and $f(0)\le\tilde{v}$.
However, in both situations, 
the proof proceeds in exactly the same way.

\begin{figure}[htbp]
  \centering
  \begin{tikzpicture}[scale=0.85]

    \begin{scope}[shift={(9,0)}]

      \def\V{4}
      \def\F{1.8}
      \def\T{5.4}

      \draw[->,thick]
        (0,0)--(7,0)
        node[right] {$v$};

      \draw[->,thick]
        (0,0)--(0,7)
        node[above] {$\theta$};

      \draw[dashed,thick]
        (0,0)--(6.5,6.5)
        node[above right] {$\theta=v$};

      \fill[blue!20]
        (0,0)--(\F,\F)--(0,\F)--cycle;

      \fill[green!25]
        (0,\F)--(0,\T)--(\V,\T)--(\V,\V)--(\F,\F)--cycle;

      \fill[red!20]
        (0,\T)--(0,6.5)--(\V,6.5)--(\V,\T)--cycle;

      \fill[orange!25]
        (\V,\V)--(\V,6.5)--(6.5,6.5)--cycle;

      \draw[dotted]
        (\V,0)--(\V,6.5);

      \draw[dotted]
        (0,\F)--(\V,\F);

      \draw[dotted]
        (0,\T)--(\V,\T);

      \draw[dotted]
        (0,\T+0.3)--(\V,\T+0.3);

      \node at (2.2,3.3) {$W'$};
      \node at (2.0,6.0) {$\Gamma_1$};
      \node at (5.2,5.8) {$\Gamma_3$};
      \node at (0.75,1.3) {$\Gamma_2$};

      \node[left] at (0,\F) {$f(0)$};
      \node[left] at (0,\T) {$\tilde{\theta}$};
      \node[below] at (\V,0) {$\tilde{v}$};

      \node at (3.2,-0.8) {situation $f(0)\le\tilde{v}$};

    \end{scope}


    \begin{scope}[shift={(0,0)}]

      \def\V{4}
      \def\F{4.8}
      \def\T{5.8}

      \draw[->,thick]
        (0,0)--(7,0)
        node[right] {$v$};

      \draw[->,thick]
        (0,0)--(0,7)
        node[above] {$\theta$};

      \draw[dashed,thick]
        (0,0)--(6.5,6.5)
        node[above right] {$\theta=v$};

      \fill[blue!20]
        (0,0)--(\V,\V)--(\V,\F)--(0,\F)--cycle;

      \fill[green!25]
        (0,\F)--(0,\T)--(\V,\T)--(\V,\F)--cycle;

      \fill[red!20]
        (0,\T)--(0,6.5)--(\V,6.5)--(\V,\T)--cycle;

      \fill[orange!25]
        (\V,\V)--(\V,6.5)--(6.5,6.5)--cycle;

      \draw[dotted]
        (\V,0)--(\V,6.5);

      \draw[dotted]
        (0,\F)--(\V,\F);

      \draw[dotted]
        (0,\T)--(\V,\T);

      \node at (2.0,5.3) {$W'$};
      \node at (2.0,6.15) {$\Gamma_1$};
      \node at (5.2,5.8) {$\Gamma_3$};
      \node at (1.8,3) {$\Gamma_2$};

      \node[left] at (0,\F) {$f(0)$};
      \node[left] at (0,\T) {$\tilde{\theta}$};
      \node[below] at (\V,0) {$\tilde{v}$};

      \node at (3.2,-0.8) {situation $f(0)>\tilde{v}$};

    \end{scope}
  \end{tikzpicture}

  \caption{Four regions from the proof of Theorem~\ref{thm:pigars}.
    Recall that $\tilde{v}:=v_{\mathrm{rest}}+A$ and
    $\tilde{\theta}:=\max\{\tilde{v},f(\tilde{v})\}$.}
  \label{fig:regions}
\end{figure}
To finish the proof, we will examine four scenarios depending on 
the starting point $(v_0,\theta_0)$. Recall that now we 
consider only orbits with infinite number of resets.\vspace{1mm}

\noindent\textbf{A.} Suppose the orbit starts in $W'$. Then:
\begin{itemize}
    \item In Case 1, this leads to a contradiction. 
    Indeed, in this case $W' = W$ is 
    positively invariant and disjoint from the diagonal.
    Hence there are no resets in $W'$.
    \item In Cases 2, 3, and 4, we have $W' \subset Q$. 
    Since $Q$ is positively invariant, an orbit starting 
    in $W'$ does not leave $Q$. Thus, condition $(*)$ 
    is satisfied.
\end{itemize}

\noindent\textbf{B.} Suppose the orbit starts in $\Gamma_1$. 
It cannot cross $v=\tilde{v}$ (the Barrier Lemma)
and, by assumption, has resets. Hence it must enter $W'$
to have these resets, i.e., to reach the diagonal. 
Thus the situation reduces to case \textbf{A}.\vspace{1mm}

\noindent\textbf{C.} Suppose the orbit starts in $\Gamma_2$. 
By the Triangle Lemma, it must enter $W'$ or $\Gamma_1$
(in the latter case, if there is a sufficiently 
large jump after a reset),
because it must cross or jump over the line $\theta = f(0)$ 
and cannot cross $v=\tilde{v}$ (by the Barrier Lemma). 
Now the situation reduces 
to case \textbf{A} or \textbf{B}.\vspace{1mm}

\noindent\textbf{D.} Finally, suppose the orbit starts in $\Gamma_3$. Then either:
\begin{itemize}
    \item it undergoes a reset in $\Gamma_3$ and, as a result,
    moves to the part of the $\theta$-axis 
    in $\Gamma_1$, $\Gamma_2$ or $W'$;
    however, since the orbit undergoes an infinite number of resets,
    we find ourselves in situation \textbf{A}, \textbf{B} or
    \textbf{C} and we are done,~ or
    \item it does not undergo a reset in $\Gamma_3$, 
    in which case it must enter $W'$ (either directly or via $\Gamma_1$), because, by assumption, it has an infinite
    number of resets; once more we are in case \textbf{A}.
\end{itemize}
This completes the proof.
\end{proof}

\begin{remark}\label{rem:regions}
Note that the regions introduced in the proof of 
Theorem~\ref{thm:pigars} have the following obvious 
but interesting properties.
\begin{itemize}
    \item As long as the orbit is in $\Gamma_1$, 
    there are no resets ($\Gamma_1$ is disjoint
    from the diagonal).
    \item An orbit that starts in $\Gamma_2$, after 
    a finite (perhaps zero) number of resets, 
    leaves $\Gamma_2$ and never returns there
    (see Triangle Lemma). 
    \item In $\Gamma_3$, there is at most one reset, after which the orbit is transferred to one 
    of the other regions.
    \item For $f(0) > v_{\mathrm{rest}} + A$
    there are no resets in $W'$. Hence if 
    $f(0) > v_{\mathrm{rest}} + A$ then
    each orbit of the MHR system has only finite 
    (possibly zero) number of resets and converges
    to the reset-free periodic orbit.
    In other words, this case (Case 1) is not 
    fundamentally different from a non-hybrid 
    situation in the sense that all resets 
    in the hybrid system are transient.
\end{itemize}
In consequence, an orbit with an infinite number 
of resets can only occur if 
$f(0)\le v_{\mathrm{rest}} + A$.
Moreover, every such orbit must enter $W'$
and remain in $Q$ forever (note that $W'$ need not 
be positively invariant).
\end{remark}

\subsection{Finite number of resets 
during one period}
In this subsection we provide some estimation
for the number of possible spikes 
during one period in the MHR system.
In particular, we prove that this number is not only finite,
but also uniformly bounded from above 
for all initial conditions.
Let us start with three auxiliary technical results.

\begin{lemma}[Mean value inequality for piecewise smooth functions]
\label{lem:mean}
Let
\(
f:[a,b]\to \mathbb{R}
\)
be continuous on $[a,b]$. Assume that there exists a partition
\[
a=x_0<x_1<\dots<x_n=b
\]
such that
\(
f\in C^1((x_{k-1},x_k))
\)
for every $k=1,\dots,n$, and that the one-sided 
derivatives exist at each partition point.
Suppose furthermore that there exist constants 
$m,M\in\mathbb{R}$ such that
\[
m\le f'(x)\le M
\]
for all points where $f'$ exists, including 
the one-sided derivatives at the partition points.
Then
\[
m(b-a)\le f(b)-f(a)\le M(b-a).
\]
\end{lemma}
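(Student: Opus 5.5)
I will prove the inequality on each subinterval of the partition and then add the pieces up, so the work reduces to the smooth case plus a limiting argument at the partition points. Fix $k\in\{1,\dots,n\}$ and consider the restriction of $f$ to $[x_{k-1},x_k]$. This restriction is continuous on the closed interval and differentiable on the open interval $(x_{k-1},x_k)$, where $f'$ satisfies $m\le f'\le M$. The classical Lagrange mean value theorem therefore gives a point $\xi_k\in(x_{k-1},x_k)$ with
\[
f(x_k)-f(x_{k-1})=f'(\xi_k)(x_k-x_{k-1}).
\]
Since $x_k-x_{k-1}>0$ and $m\le f'(\xi_k)\le M$, this yields
\[
m(x_k-x_{k-1})\le f(x_k)-f(x_{k-1})\le M(x_k-x_{k-1}).
\]

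Next I will sum these inequalities over $k=1,\dots,n$. The middle terms telescope to $f(b)-f(a)$, and the outer terms sum to $m(b-a)$ and $M(b-a)$ respectively, which is exactly the assertion. Continuity of $f$ on all of $[a,b]$ is what makes the telescoping legitimate: the value $f(x_k)$ used as the right endpoint of the $k$-th piece is the same as the left endpoint value of the $(k+1)$-th piece, so no jump terms appear.

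There is essentially no obstacle here; the only point needing care is the use of the mean value theorem on a closed subinterval where differentiability is known only on the interior. This is exactly the hypothesis Lagrange's theorem requires, and it needs only continuity at the endpoints. Consequently the assumption on one-sided derivatives at partition points is not actually needed for the inequality itself. It is presumably included because the lemma will be applied, in the spike-counting argument that follows, to functions such as $h(t)=\theta(t)-v(t)$ along NH solutions driven by the pulse input \eqref{eq:inputSquare}. For such functions, bounds on $\dot v$ and $\dot\theta$ are naturally stated including the one-sided derivatives at the jump times of $I$. The one-sided derivative hypothesis does no harm and can be remarked upon as redundant.
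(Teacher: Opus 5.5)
Your proof is correct and matches the paper's own argument. Like the paper, you apply Lagrange's mean value theorem on each $[x_{k-1},x_k]$ (continuity on the closed subinterval, differentiability on its interior), bound $f'(\xi_k)$ by $m$ and $M$, and sum the telescoping inequalities; your remark that the one-sided derivative hypothesis at the partition points is not needed is also correct.
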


\begin{proof}
Apply the classical Lagrange mean value theorem on each interval
\(
(x_{k-1},x_k),
\)
obtaining
\[
m(x_k-x_{k-1})
\le
f(x_k)-f(x_{k-1})
\le
M(x_k-x_{k-1}).
\]
Summing over $k=1,\dots,n$ yields
\[
m(b-a)\le f(b)-f(a)\le M(b-a). \qedhere
\]
\end{proof}

\begin{lemma}\label{lem:compact}
Let $G \subset \mathbb{P}$ be a compact set disjoint 
from the diagonal
$
\mathbb{L}=\{(v,\theta)\in\mathbb{P}\mid v=\theta\},
$
and let $q\colon G\to [0,\infty)$ be defined by
$
q(v,\theta)=\theta-v.
$
Then
$
D:=\min\{q(v,\theta)\mid(v,\theta)\in G\}>0.
$
\end{lemma}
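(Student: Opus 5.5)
The plan is to use the extreme value theorem. The function $q(v,\theta)=\theta-v$ is the restriction to $G$ of a linear, hence continuous, function on $\R^2$. Since $G$ is compact, $q$ attains its minimum on $G$ at some point $(v^*,\theta^*)\in G$, provided $G\neq\emptyset$. If $G$ is empty, the statement is vacuous, or one adopts the convention that the minimum is $+\infty$. I will treat the nonempty case, which is the only meaningful one. So $D=q(v^*,\theta^*)$ is a well-defined real number.

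Next I show $D>0$. Because $G\subset\Ph$, every point of $G$ satisfies $\theta\ge v\ge 0$, so $q\ge 0$ on $G$ and in particular $D\ge 0$. Suppose $D=0$. Then $\theta^*=v^*$ with $v^*\ge0$, so $(v^*,\theta^*)\in\mathbb{L}$. This contradicts the assumption that $G$ is disjoint from the diagonal. Hence $D>0$.

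An equivalent viewpoint is that $D$ is comparable to the Euclidean distance from $G$ to the closed set $\mathbb{L}$, namely $\dist(G,\mathbb{L})=D/\sqrt2$. That distance is positive because $G$ is compact, $\mathbb{L}$ is closed, and the two sets are disjoint. The direct argument above is shorter, though.

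There is no real obstacle here. The only points to be careful about are that the minimum is attained, which uses compactness and not just closedness, and that the sign $q\ge0$ comes from the inclusion $G\subset\Ph$.
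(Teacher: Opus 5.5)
Your proposal is correct and follows essentially the same route as the paper: continuity of $q$ and compactness of $G$ give an attained minimum, and $D=0$ would put that minimizer on $\mathbb{L}$, contradicting disjointness. Your extra remarks are harmless refinements that the paper omits: the nonempty-$G$ caveat, the observation that $q\ge0$ follows from $G\subset\Ph$, and the correct identity $\dist(G,\mathbb{L})=D/\sqrt2$.
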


\begin{proof}
Since $q$ is continuous and $G$ is compact, 
$q$ attains its minimum on $G$. Suppose, 
contrary to the claim, that $D=0$. 
Then there exists $(v,\theta)\in G$ such that
$
q(v,\theta)=0.
$
Hence $\theta-v=0$, that is, $\theta=v$. 
Therefore $(v,\theta)$ belongs to the diagonal, 
a contradiction.
\end{proof}

\begin{lemma}\label{lem:deriv}
Let $H\subset\mathbb{P}$ be a compact positively invariant set. 
Then there exists a constant $C>0$ such that for every 
$t_0\in\mathbb{R}$, every $t\ge t_0$, and 
every $(v_0,\theta_0)\in H$, 
the corresponding solution satisfies
\[
s(t)=s(t;t_0,v_0,\theta_0)
=\frac{f(v(t))-\theta(t)}{\tau}+v(t)-v_{\mathrm{rest}}-I(t)
>-C.
\]
\end{lemma}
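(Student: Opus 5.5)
The function $s(t)$ is exactly $\langle F\mid n_D\rangle$ from \eqref{eq:diag}, evaluated along the hybrid solution at $(t,v(t),\theta(t))$, with $n_D=[-1,1]$. So the plan is a continuity-and-compactness bound: check that each term of $s$ is bounded on $H$ and that $I$ is bounded in $t$. Positive invariance of $H$ is used only to keep the solution in $H$ for all $t\ge t_0$. The reset rule \eqref{eq:mhr3} maps points of $\mathbb{L}\cap H$ back into $H$, since $H$ is positively invariant for the hybrid process. Hence $(v(t),\theta(t))\in H$ for every $t\ge t_0$, at reset instants and between them. Right-continuity of $\phi$ makes $s(t)$ well defined at reset times, and it takes the post-reset value.

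Since $H$ is compact, it lies in some box $[0,V_{\max}]\times[\Theta_{\min},\Theta_{\max}]$. Here $V_{\max}=\max_H v$, $\Theta_{\min}=\min_H\theta\ge0$ (because $\theta\ge v\ge0$ on $\Ph$) and $\Theta_{\max}=\max_H\theta$. We then bound each summand from below, uniformly in $t_0$, $t$ and $(v_0,\theta_0)\in H$.

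First, $f(v(t))\ge f(0)=a+\ue^{-bc}>0$ because $f$ is increasing and $v\ge0$. Combined with $\theta(t)\le\Theta_{\max}$, this gives $\bigl(f(v(t))-\theta(t)\bigr)/\tau\ge (f(0)-\Theta_{\max})/\tau$.

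Second, $v(t)\ge0$ holds in $\Ph$.

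Third, \eqref{eq:input} gives $-v_{\mathrm{rest}}-I(t)\ge -v_{\mathrm{rest}}-A$ for all $t$; this is where the uniformity in time comes from.

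Summing, $s(t)\ge (f(0)-\Theta_{\max})/\tau - v_{\mathrm{rest}}-A$. Taking
\[
C=\frac{\Theta_{\max}-f(0)}{\tau}+v_{\mathrm{rest}}+A+1>0
\]
makes the inequality strict. The $+1$ also guarantees $C>0$ even when the other terms happen to be negative.

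The only point requiring care is the claim that the solution never leaves $H$. We must make sure positive invariance is meant for the hybrid process $\phi$, including jumps, and not merely for the NH flow. This is how the definition in the preliminaries is phrased, and how it is applied to $Q$ in Theorem~\ref{thm:pigars}. Nothing else is delicate: at discontinuities of $I$ the quantity $s$ may jump, but the bound for $I$ holds pointwise, so the estimate is unaffected.
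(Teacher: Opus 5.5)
Your argument is correct and essentially the paper's. In both, positive invariance keeps the solution in $H$, compactness of $H$ bounds $(f(v)-\theta)/\tau+v$ from below, and $I(t)\le A$ handles the input term. The paper minimizes this continuous function over $H$; you use the explicit bounds $f(v)\ge f(0)$, $v\ge 0$ and $\theta\le\Theta_{\max}$.

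One small slip: the $+1$ does not by itself force $C>0$, since $(\Theta_{\max}-f(0))/\tau$ could a priori be below $-(v_{\mathrm{rest}}+A+1)$. Either take the maximum of your constant and $1$, or note that a nonempty compact positively invariant $H$ must satisfy $\Theta_{\max}\ge f(0)$. Otherwise, starting from a point of $H$ with $\theta=\Theta_{\max}<f(0)$, $\theta$ would immediately exceed $\Theta_{\max}$, either by Step~1 of the Triangle Lemma or by a reset jump.
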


\begin{proof}
Define the function $p\colon H\to\mathbb{R}$ by
$
p(v,\theta)=(f(v)-\theta)/\tau+v.
$
Since $p$ is continuous and $H$ is compact, 
$p$ attains its minimum on $H$. 
Hence there exists a constant $C_1>0$ such that
$
p(v,\theta)>-C_1
$
for all $(v,\theta)\in H$.
Let $(v_0,\theta_0)\in H$. Since $H$ is positively invariant,
$
(v(t),\theta(t))\in H
$
for all $t\ge t_0$.
Therefore,
\[
\frac{f(v(t))-\theta(t)}{\tau}+v(t)>-C_1.
\]
Consequently, using the bound $I(t)\le A$, we obtain
\[
s(t)
=\frac{f(v(t))-\theta(t)}{\tau}+v(t)-v_{\mathrm{rest}}-I(t)
>-C_1-v_{\mathrm{rest}}-A=-C. \qedhere
\]
\end{proof}

\begin{lemma}\label{lem:dc}
Let $H\subset\mathbb{P}$ be a compact positively invariant set, and let
$G\subset H$ be a compact set disjoint from the diagonal. Then there exist
constants $C,D>0$ such that for every reset-including solution with initial
condition $(t_0,v_0,\theta_0)$, where $(v_0,\theta_0)\in G$, if $t_1$ denotes
the first reset time after $t_0$, then
\[
t_1-t_0\ge \frac{D}{C}.
\]
\end{lemma}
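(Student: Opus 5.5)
The plan is to follow the gap function $h(t)=\theta(t)-v(t)$ along the solution on $[t_0,t_1]$. It starts at a value bounded below by $D$ and must fall to $0$ at the first reset. A lower bound $-C$ on its derivative then forces the elapsed time to be at least $D/C$.

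First I would fix the constants, using the two auxiliary lemmas directly.
\begin{itemize}
\item By Lemma~\ref{lem:compact} applied to $G$, the quantity $D=\min_{G}(\theta-v)$ is positive, so $h(t_0)=\theta_0-v_0\ge D$.
\item By Lemma~\ref{lem:deriv} applied to the compact positively invariant set $H\supset G$, there is $C>0$ with $s(t)>-C$ for all $t\ge t_0$, uniformly in the initial data in $H$.
\end{itemize}

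Next I would observe that on $[t_0,t_1)$ no reset occurs. There the hybrid solution coincides with the solution of the NH system \eqref{eq:mhr1}--\eqref{eq:mhr2}, and $h$ is continuous on $[t_0,t_1]$ with left limit $h(t_1^-)=0$ (the reset condition \eqref{eq:mhr3}). On each open subinterval where $I$ is continuous, $h$ is $C^1$ with
\[
h'(t)=\dot\theta-\dot v=\frac{f(v)-\theta}{\tau}+v-v_{\mathrm{rest}}-I(t)=s(t).
\]
At the finitely many discontinuity points of $I$ in $[t_0,t_1]$ the one-sided derivatives exist and equal $s(t^\pm)$. These are expressions of the same form with $I(t^\pm)\in[0,A]$, so they also exceed $-C$.

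One point needs care: Lemma~\ref{lem:deriv} is stated for the solution of the hybrid system staying in $H$. Before $t_1$ the hybrid and NH solutions agree, so $(v(t),\theta(t))\in H$ for $t\in[t_0,t_1)$, and at $t_1^-$ the point lies in $H$ by closedness. The proof of Lemma~\ref{lem:deriv} only uses that the state lies in $H$ and that $0\le I\le A$, so the bound $-C$ applies to every one-sided derivative on $[t_0,t_1]$.

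I would then apply Lemma~\ref{lem:mean} to $h$ on $[t_0,t_1]$ with $m=-C$:
\[
-D\ge h(t_1)-h(t_0)\ge -C(t_1-t_0),
\]
which gives $t_1-t_0\ge D/C$.

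The main obstacle is exactly the justification in the previous step that $h$ is continuous up to $t_1$ and that the derivative bound holds on the closed interval, including at the jump points of $I$. Since $D$ and $C$ depend only on $G$ and $H$, the bound is uniform over all reset-including solutions starting in $G$.
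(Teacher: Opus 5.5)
Your proposal is correct and follows essentially the same argument as the paper: track $\theta-v$ from $t_0$ to $t_1^-$, take $D$ from Lemma~\ref{lem:compact} and $C$ from Lemma~\ref{lem:deriv}, and conclude with Lemma~\ref{lem:mean}. Your extra care fills in details the paper leaves implicit, namely the one-sided derivatives at the jump points of $I$ (where $I(t^\pm)\in[0,A]$) and the fact that the pre-reset state at $t_1^-$ still lies in $H$ by closedness.
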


\begin{proof}
Consider the ``distance'' function to the reset set (the diagonal)
\[
d(t)=d(t;t_0,v_0,\theta_0):=\theta(t)-v(t).
\]
On the interval $[t_0,t_1^-]$, that is, before the first reset occurs, 
the solution and, in consequence, the function $d$ is piecewise 
smooth with one-sided derivatives at the partition points. 
Moreover, by Lemma~\ref{lem:deriv},
\[
\dot d(t) = s(t)>-C
\]
for all $t\in[t_0,t_1^-]$,
which, when combined with Lemma~\ref{lem:mean}, gives
\[
d(t)-d(t_0)\ge -C(t-t_0).
\]
Since $(v_0,\theta_0)\in G$, Lemma~\ref{lem:compact} yields
\[
d(t_0)=\theta_0-v_0=q(v_0,\theta_0)\ge D>0.
\]
Hence
\[
d(t)\ge d(t_0)-C(t-t_0)
    \ge D-C(t-t_0).
\]
Finally, taking $t=t_1^-$ we obtain
\[
0=d(t_1^-)\ge D-C(t_1-t_0),
\quad\text{i.e.,}\quad
t_1-t_0\ge D/C.\qedhere
\]
\end{proof}

Below, we provide estimates of the time interval between 
any two consecutive resets for all reset-including
solutions of the MHR system. Let us call the time interval 
between successive resets the \emph{interspike interval}.

\begin{figure}[htbp]
  \centering

  \definecolor{skyblue}{RGB}{205,235,255}
  \definecolor{pastelgreen}{RGB}{145,210,145}
  \definecolor{darkgreen}{RGB}{20,70,20}

  \begin{tikzpicture}[scale=1.2, >=Stealth, font=\large]
    \def\vzero{3.0}
    \def\thetazero{4.5}
    \def\deltazero{1.5}
    \def\thetamax{5.0}

    \begin{scope}[on background layer]
      \fill[pastelgreen]
        (0,0) -- (\vzero,\vzero) -- (\vzero,\thetazero) -- (0,\thetazero) -- cycle;

      \fill[skyblue]
        (0,\thetazero) -- (\thetazero,\thetazero) -- (\thetamax,\thetamax) -- (0,\thetamax) -- cycle;

      \fill[skyblue]
        (\vzero,\vzero) -- (\vzero,\thetazero) -- (\thetazero,\thetazero) -- cycle;
    \end{scope}

    \draw[->, ultra thick]
      (-1.0,0) -- (5.5,0)
      node[right, font=\Large] {$v$};

    \draw[->, ultra thick]
      (0,-1.0) -- (0,5.5)
      node[above, font=\Large] {$\theta$};

    \node[below left, font=\large]
      at (0,0) {$O$};

    \node[darkgreen, font=\Huge\bfseries]
      at (1.1,2.9) {$H$};

    \draw[thick, black, domain=0:5.0]
      plot (\x,{\x})
      node[right, black] {$\theta = v$};

    \draw[thick, blue, dashed]
      (\vzero,0) -- (\vzero,5.5)
      node[above, blue] {$v = v^\circ$};

    \draw[thick, orange!80!black, dashed]
      (0,\thetazero) -- (5.5,\thetazero)
      node[right, orange!80!black] {$\theta = \theta^\circ$};

    \draw[line width=2.5pt, red]
      (0,\thetazero) -- (\vzero,\thetazero)
      node[midway, above, red, font=\Large\bfseries] {$G^\ast$};

    \draw[line width=2.5pt, red]
      (0,\deltazero) -- (0,\thetazero)
      node[pos=0.75, left=2pt, red, font=\Large\bfseries] {$G$};

    \draw[
      line width=1.5pt,
      green!50!black,
      shorten <=1.5pt,
      shorten >=1.5pt
    ]
      (0,\deltazero) -- (0,0) -- (\vzero,\vzero) -- (\vzero,\thetazero);

    \draw[thick]
      (\vzero,0.1) -- (\vzero,-0.1)
      node[below, font=\large] {$v^\circ$};

    \draw[thick]
      (0.1,\thetazero) -- (-0.1,\thetazero)
      node[left, font=\large] {$\theta^\circ$};

    \draw[thick]
      (0.1,\deltazero) -- (-0.1,\deltazero)
      node[left, font=\large] {$\Delta$};

    \draw[gray, dotted, thick]
      (\vzero,\vzero) -- (0,\vzero);

    \draw[thick]
      (0.1,\vzero) -- (-0.1,\vzero)
      node[left, font=\large] {$v^\circ$};

    \draw[gray, dotted, thick]
      (\thetazero,\thetazero) -- (\thetazero,0);

    \draw[thick]
      (\thetazero,0.1) -- (\thetazero,-0.1)
      node[below, font=\large] {$\theta^\circ$};
  \end{tikzpicture}

  \caption{Sets $H$, $G$ and $G^\ast$ from the proof
    of Proposition~\ref{prop:est}.}
  \label{fig:setH}
\end{figure}

\begin{proposition}\label{prop:est}
There is $\delta>0$ such that for all solutions of the MHR system
all the interspike intervals $\mathbb{t}$ satisfy the~condition
$\mathbb{t}\ge\delta$, i.e., they are uniformly bounded from below
by a positive number.
\end{proposition}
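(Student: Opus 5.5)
The plan is to reduce everything to Lemma~\ref{lem:dc}, applied with a suitable compact positively invariant set $H$ and a compact set $G\subset H$ disjoint from the diagonal that contains every post-reset state. The obstacle is that the phase space $\Ph$ is unbounded, and the post-reset states $(0,\theta(t_1^-)+\Delta)$ can lie arbitrarily high on the $\theta$-axis. I will therefore treat the first interspike interval of an orbit separately from the later ones.

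\textbf{Step 1: choice of $H$ and $G$.} Every reset happens at a diagonal point $(v,v)$ with $v\ge0$, and the reset state is $(0,v+\Delta)$. I fix $v^\circ\ge v_{\mathrm{rest}}+A$ and $\theta^\circ\ge\max\{v^\circ,f(v^\circ)\}+\Delta$, with both large. Let
\[
H=\{(v,\theta)\in\Ph\mid 0\le v\le v^\circ,\ \theta\le\theta^\circ\}.
\]
I will check that $H$ is positively invariant for the hybrid system.
\begin{itemize}
\item The right side $v=v^\circ$ cannot be crossed, by the Barrier Lemma (or Lemma~\ref{lem:ineq}).
\item On the top side $\theta=\theta^\circ\ge f(v^\circ)\ge f(v)$ we have $\langle F\mid n_T\rangle\le0$.
\item The $\theta$-axis is repelling, since $\dot v=v_{\mathrm{rest}}+I>0$ at $v=0$.
\item A reset from a diagonal point of $H$ lands at height at most $v^\circ+\Delta\le\theta^\circ$, so it stays in $H$.
\end{itemize}
Take $G=\{0\}\times[\Delta,\theta^\circ]$, which is compact, contained in $H$, and at distance at least $\Delta$ from the diagonal. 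Every reset that starts from $H$ lands in $G$.

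\textbf{Step 2: intervals between resets inside $H$.} Let $t_k<t_{k+1}$ be consecutive resets with $\phi(t_k)\in G$. Restarting the solution at time $t_k$ from its state in $G$, Lemma~\ref{lem:dc} gives constants $C,D>0$, independent of $t_k$ and of the orbit, with $t_{k+1}-t_k\ge D/C$. In fact $D=\Delta$ here, and $C$ comes from Lemma~\ref{lem:deriv} on $H$. The bound is uniform in the start time because Lemma~\ref{lem:deriv} only uses $I\le A$.

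\textbf{Step 3: reset states outside $H$.} It remains to handle interspike intervals starting from a reset state $(0,\theta_*)$ with $\theta_*>\theta^\circ$. These arise only when the preceding reset point has $v>v^\circ$, that is, from the region $\Gamma_3$-type part of $\Ph$ with $v>v^\circ$. I expect this to be the main obstacle, since compactness is lost.

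The first idea is to avoid Lemma~\ref{lem:dc} and estimate directly. From $(0,\theta_*)$ we have $v(t)\le v_{\mathrm{rest}}+A$ for all later times by the Barrier Lemma, while $\theta(t)\ge\min\{\theta_*,f(0)\}$. Hence $d=\theta-v\ge\theta_*e^{-(t-t_k)/\tau}-\tilde v$ roughly, because $\dot\theta\ge-\theta/\tau$. With $\theta_*\ge\theta^\circ$ chosen large, say $\theta^\circ\ge 2\tilde v+\Delta$, this gives a uniform positive lower bound $\tau\ln(\theta^\circ/\tilde v)$ on the time to reach the diagonal.

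Alternatively, I can bound the speed of $d$ directly: $\dot d=\dot\theta-\dot v\ge -\theta/\tau-v_{\mathrm{rest}}-A$, and the gap to close is at least $\theta_*-\tilde v$. The resulting time is large exactly when $\theta_*$ is large, so again we get a uniform bound.

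Taking $\delta$ as the minimum of $D/C$ and this explicit bound finishes the proof.
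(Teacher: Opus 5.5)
Your proof is correct. The first half (the trapezoid $H$, the segment $G=\{0\}\times[\Delta,\theta^\circ]$, and Lemma~\ref{lem:dc}) coincides with the paper's; you differ in how you treat post-reset states lying above $H$.

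The paper argues orbit by orbit. It uses Remark~\ref{rem:regions} to note that an orbit has at most one reset before entering $H$. After such a reset the orbit is confined to $v\le v^\circ$ by the Barrier Lemma, so it must enter $H$ through the top edge $G^\ast=\{(v,\theta^\circ)\mid 0\le v\le v^\circ\}$. The paper then applies Lemma~\ref{lem:dc} a second time, to $G^\ast$, and obtains the bound $D^\ast/C^\ast$.

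You instead classify each interspike interval by its own starting state $(0,\theta_*)$. For $\theta_*>\theta^\circ$ you use the comparison $\theta(t)\ge\theta_*e^{-(t-t_k)/\tau}$ (valid since $f>0$) together with $v(t)\le\tilde v$. This inequality is exact, not ``rough''. It gives the explicit bound $\tau\ln(\theta^\circ/\tilde v)$, which is already positive because $\theta^\circ\ge v^\circ+\Delta>\tilde v$, so no extra largeness condition on $\theta^\circ$ is needed.

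Your route yields an explicit constant and avoids the re-entry bookkeeping. The paper's route stays within a single compactness mechanism and reuses its lemma. Pushing your comparison a bit further, $v(t)\le\tilde v\,(1-e^{-s})$ with $s=t-t_k$ gives $\theta-v\ge\Delta e^{-s/\tau}-\tilde v(1-e^{-s})$ for every post-reset state. The unique zero $s_0>0$ of this decreasing function is then a uniform bound, so Lemma~\ref{lem:dc} could be dropped altogether.

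Your second, ``speed'' alternative is weaker as written. It needs the upper bound $\theta(t)\le\theta_*$, which holds because $\theta_*\ge f(\tilde v)$. Also, the lower bound it produces approaches $\tau$ as $\theta_*\to\infty$ rather than growing, though it remains uniformly positive. The exponential estimate is the one to keep.
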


\begin{proof}
Define the numbers
\[
v^\circ:=v_{\mathrm{rest}}+A,
\qquad
\theta^\circ:=\max\{v^\circ+\Delta,f(v^\circ)+\max\{0,k\}\}.
\]
and the set
\[
H=\{(v,\theta)\in\Ph\mid v\le v^\circ,\;\theta\le\theta^\circ\}.
\]
One can check that $H$ is a positively invariant compact trapezoid
(see Figure~\ref{fig:setH}). Positive invariance follows
from an easy analysis similar to that in the proof of 
Proposition~\ref{prop:naode} and Theorem~\ref{thm:pigars}.
Moreover, by Remark~\ref{rem:regions}, each solution 
of the MHR system has at most one reset before entering $H$, 
and all other resets (if they happen to show up)
occur within $H$ (from positive invariance).
We distinguish two cases.\vspace{1mm}

\noindent
\textit{Case 1. Orbits starting in $H$.}
Let $G:=\{(0,\theta)\mid \Delta\le\theta\le\theta^\circ\}$,
i.e., the segment of the reset line contained in $H$
(see Remark~\ref{rem:regions}). 
Observe that after each reset, the orbit starting in $H$
is placed on $G$ and remains in $H$.
Since $G$ is compact, contained in $H$, and disjoint from the diagonal, 
by Lemma~\ref{lem:dc}, there exist constants $C,D>0$
such that for every reset-including solution 
the time elapsed between any two consecutive resets $t_1<t_2$
is bounded from below by $D/C$. Note that at $t_1^+$ 
the orbit is on $G$ and at $t_2^-$ is on the diagonal.
Hence every interspike interval of a solution starting in $H$ 
satisfies $\mathbb t\ge D/C$.\vspace{1mm}

\noindent
\textit{Case 2. Orbits starting outside $H$.}
Recall that an orbit starting outside $H$ can experience 
at most one reset before entering $H$.
Let $G^\ast:=\{(v,\theta^\circ)\mid 0\le v\le v^\circ\}$, i.e., 
the segment of the upper horizontal boundary of $H$
through which reset-including orbits enter $H$. 
The set $G^\ast$ is compact, contained
in $H$, and disjoint from the diagonal. 
Applying again Lemma~\ref{lem:dc} to $G^\ast$, 
we obtain a positive lower bound ${D^\ast}/{C^\ast}$
for the time needed to reach the next reset after entering $H$.
Consequently, if an orbit undergoes a reset before entering $H$, then
the time between this reset and the next one is at least the time elapsed
from entering $H$ to the subsequent reset, and therefore
$
\mathbb t\ge {D^\ast}/{C^\ast}.
$
After that reset, the orbit belongs to Case~1, and all remaining
interspike intervals satisfy the bound obtained there.\vspace{1mm}

Combining the two cases, every interspike interval of every solution is bounded
from below by
\[
\delta:=\min\left\{{D}/{C},\,{D^\ast}/{C^\ast}\right\}>0,
\]
which proves our claim.
\end{proof}

Now we can formulate the  main result of this subsection.

\begin{proposition}
Fix the time interval $U=[t_1,t_2]$ ($t_1<t_2$, 
for example $t_2=t_1+T$).
There is a natural number $M$ such that for each
solution of the MHR system the number of resets
of this solution for $t\in U$ is at most $M$.   
\end{proposition}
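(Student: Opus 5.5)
The plan is to derive the bound directly from Proposition~\ref{prop:est}. That result gives a uniform lower bound $\delta>0$ on every interspike interval, valid for all solutions of the MHR system and all initial conditions $(t_0,v_0,\theta_0)$. Resets occurring within the window $U=[t_1,t_2]$ must therefore be separated by at least $\delta$. We then set
\[
M:=\left\lfloor \frac{t_2-t_1}{\delta}\right\rfloor+1
\]
and claim that no solution has more than $M$ resets in $U$.

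The key step is a simple counting argument. Fix a solution and suppose it has reset times $s_1<s_2<\dots<s_N$ in $U$. We first note that these are genuine consecutive resets of the solution, so each gap $s_{j+1}-s_j$ is an interspike interval. This needs a short check: resets are isolated in time, so the reset times in $U$ form an ordered list. Even if there were infinitely many, the argument would still apply to any finite sub-collection of consecutive ones. Proposition~\ref{prop:est} then gives $s_{j+1}-s_j\ge\delta$. Summing over $j$ yields
\[
(N-1)\delta\le s_N-s_1\le t_2-t_1,
\]
so $N\le (t_2-t_1)/\delta+1$, and hence $N\le M$. Since $\delta$ does not depend on the solution or on $U$, neither does $M$, apart from its dependence on the length $t_2-t_1$. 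This also shows that the number of resets in $U$ is finite, with no possible accumulation of resets.

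The only delicate point is that ``interspike interval'' in Proposition~\ref{prop:est} covers every pair of consecutive resets. This includes the first one after an initial transient outside $H$, which is handled by Case~2 of that proof. So the consecutive resets inside $U$ are all covered, whether or not the solution started in $H$. I would also remark that a reset at exactly $t_1$ or $t_2$ does not affect the count, since the bound already includes the $+1$. Finally, in the special case $t_2=t_1+T$ we obtain a uniform bound $\lfloor T/\delta\rfloor+1$ on the number of spikes per period.
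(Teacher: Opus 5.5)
Your proposal is correct and matches the paper's argument: the paper also invokes the uniform lower bound $\delta$ on interspike intervals from Proposition~\ref{prop:est} and takes $M=\lfloor (t_2-t_1)/\delta\rfloor+1$. Your explicit count of $N-1$ gaps between $N$ resets in $U$ just spells out the one-line estimate the paper leaves implicit.
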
 

\begin{proof}
Since, by Proposition~\ref{prop:est}, each reset requires at least 
$\delta>0$ units of time, only finitely many resets 
can occur in any finite time interval and the number of resets 
can be estimated uniformly by $M=\lfloor(t_2-t_1)/\delta\rfloor+1$. 
\end{proof}

\begin{corollary}
Define 
\[
S_n=
\{z\in\mathbb{P}\mid 
\text{$\phi(t;0,z)$ has exactly $n$ spikes for $t\in[0,T]$}\}.
\]
There is a nonnegative integer $n_{\max}$ such that
\[
\mathbb{P}=\bigcup_{i=0}^{\quad i=n_{\max}}S_i
\quad\text{and}\quad
S_i\cap S_j=\emptyset\;\;\text{for $i\neq j$,}
\]
so $S_0,S_1,\dotsc,S_{n_{\max}}$ form
a finite partition of $\mathbb{P}$.
\end{corollary}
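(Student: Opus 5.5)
The corollary follows almost directly from the preceding proposition, applied with $U=[0,T]$. That proposition gives a natural number $M$ such that every solution of the MHR system has at most $M$ resets for $t\in[0,T]$. I would set $n_{\max}$ to be the largest $n\le M$ with $S_n\neq\emptyset$, or simply $n_{\max}=M$ if empty pieces are allowed in the partition. The proof then has two parts: the sets $S_n$ are well defined and pairwise disjoint, and they cover $\Ph$.

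\emph{Well-definedness and disjointness.} Fix $z\in\Ph$. By the properties of the local hybrid process, the full forward trajectory $\phi(t;0,z)$, $t\ge 0$, exists and is unique. The reset rule \eqref{eq:mhr3} is deterministic, and between resets uniqueness holds for the NH system. Hence the set of reset instants in $[0,T]$ is a uniquely determined set attached to $z$. By the preceding proposition this set is finite, so its cardinality is a well-defined nonnegative integer $n(z)$. Since $z\in S_n$ exactly when $n=n(z)$, the sets $S_i$ and $S_j$ are disjoint for $i\neq j$.

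\emph{Covering.} For each $z$ we have $0\le n(z)\le M$, so $\Ph=\bigcup_{i=0}^{M}S_i$. If $n_{\max}$ is chosen as the largest index with $S_n\ne\emptyset$, then the union up to $n_{\max}$ still equals $\Ph$, since the higher sets are empty. Also $S_0$ need not be nonempty a priori, which is consistent with the statement: only some indices are realized, and ``partition'' is meant in the sense of a disjoint cover.

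The only delicate point is a convention. A spike is counted at a reset instant $t$ with $h$ changing sign, and one must decide whether a reset at the endpoint $t=0$ or $t=T$ is counted. Since $\phi$ is right-continuous and $z\in\Ph$ is the state at $t=0^+$, I would adopt a fixed convention (for example counting resets with $t\in(0,T]$). Any fixed convention preserves both finiteness and the uniform bound $M$, possibly replaced by $M+1$. No other obstacle arises, because the uniform lower bound $\delta$ on interspike intervals from Proposition~\ref{prop:est} is what makes $M$ independent of $z$.
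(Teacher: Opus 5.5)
Your proposal is correct and is essentially the paper's argument. The corollary is stated there as an immediate consequence of the preceding proposition applied with $U=[0,T]$, whose uniform bound $M$ lets one take $n_{\max}\le M$, while disjointness follows because each $z\in\mathbb{P}$ determines a unique forward hybrid trajectory and hence a unique spike count. Your remarks on well-definedness and on counting resets at the endpoints $t=0$ and $t=T$ are sound refinements that the paper leaves implicit.
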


\subsection{Gradation in the possible number of spikes fired 
with respect to initial conditions 
in the MHR system}\label{subsec:gradation}

While modeling neuron's activity, one of the fundamental issues 
is to distinguish whether a chosen model is able to model \emph{tonic} 
or \emph{phasic} spiking (or both). Tonic neurons fire infinitely many 
spikes (when the amplitude of the injected current is large enough)
whereas phasic spiking means that the neuron fires at most finitely 
many spikes and then becomes silent even if the same stimulus is kept.  

Figure~\ref{fig:spikes} reveals a clear organization of the
$(v,\theta)$ phase plane according to the total number 
of spikes generated by
trajectories over the infinite time horizon. The regions corresponding to
different spike counts are arranged in a structured manner, indicating
that the total number of spikes depends systematically on the initial
state rather than varying irregularly across the phase plane.

\begin{figure}[htbp]
    \includegraphics[scale =0.42,
    trim= 0mm 0mm 0mm 0mm]{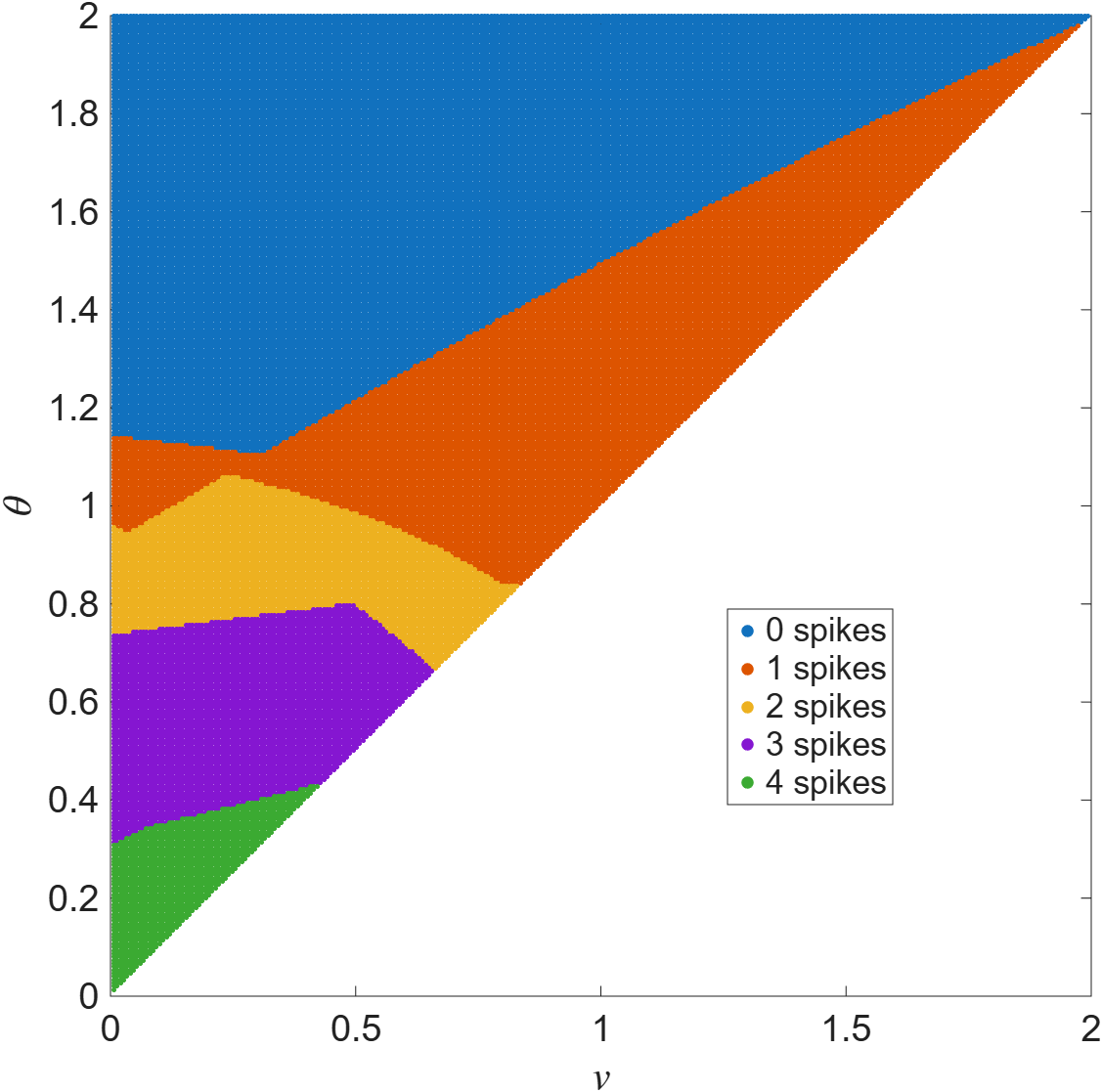}
    \caption{Finite number
    of spikes for all solutions in the MHR model
    with a unique periodic attractor, which is reset-free.
    Meaning of colors of initial values of 
    solutions in the phase space:
    blue  (no spikes), red (exactly one spike), 
    yellow (exactly two spikes), 
    purple (exactly three spikes),
    green (exactly four spikes).
    All other solutions (starting above the line $\theta=2$)
    produce $0$ spikes.
    Parameter values: $v_\mathrm{rest} = 0.1; 
    A = 2; T = 2; d = 0.31; \tau = 2;
    a = 0.08; b = 0.5; c = 0.53; 
    \Delta = 0.3; t_0=0$.
    The phase space
    $0\le v\le\theta\le2$ was discretized into a $300\times300$ 
    grid of initial conditions.
    }
    \label{fig:spikes}
\end{figure}

More precisely, Figure~\ref{fig:spikes} suggests some gradation 
in the possible number of fired spikes. 
Namely, a particularly pronounced trend 
is observed with respect to the initial
threshold value. As the initial threshold $\theta$ increases, the total
number of spikes shows a tendency not to increase, 
i.e., either stay the same or decrease,
whereas trajectories starting from
lower threshold values are capable of generating additional resets before
approaching the resting state. This numerical evidence naturally leads to
the following specific result and to a much more general, 
though as yet unproven, conjecture, which are formulated below.

\begin{proposition}\label{prop:numberofspikes}
Assume that $\theta_1>\theta_0>v_0\ge0$.
Let $\hat{x}(t)=\hat{x}(t;t_0,v_0,\theta_0)$ 
and $\hat{y}(t)=\hat{y}(t;t_0,v_0,\theta_1)$
denote the trajectories of the MHR system with initial conditions 
$(t_0,v_0,\theta_0)$ and $(t_0,v_0,\theta_1)$ respectively, i.e., 
$\hat{x}$ and $\hat{y}$ both start at $t_0$ with the same voltage value
$v_0$ but $\hat{y}$ starts above $\hat{x}$ in the $v$-$\theta$ plane.
Under the above assumptions,
if $\hat{x}(t)$ has no resets for $t\ge t_0$
then $\hat{y}(t)$ also has no resets for $t\ge t_0$.
\end{proposition}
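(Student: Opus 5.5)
Since $\hat{x}$ has no resets for $t\ge t_0$, it coincides with the NH solution $(v(t),\theta_x(t))$ starting at $(t_0,v_0,\theta_0)$. The plan is to compare $\hat{y}$ with the NH solution $(v(t),\theta_y(t))$ starting at $(t_0,v_0,\theta_1)$. As long as $\hat{y}$ has not reset, it follows this NH solution. Both NH solutions share the same voltage component $v(t)$, because equation \eqref{eq:mhr1} does not involve $\theta$ and the initial voltage is the same.

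The key computation uses the explicit formula from the proof of the Barrier Lemma. Subtracting the formulas for $\theta_y$ and $\theta_x$, the integral terms $\frac1\tau\int_{t_0}^t f(v(s))\ue^{s/\tau}\,ds$ cancel, since they depend only on the common $v$. This gives
\[
\theta_y(t)-\theta_x(t)=\ue^{(t_0-t)/\tau}(\theta_1-\theta_0)>0\quad\text{for all } t\ge t_0 .
\]
Hence $h_y(t)=\theta_y(t)-v(t)>\theta_x(t)-v(t)=h_x(t)$.

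Since $\hat{x}$ has no resets and stays in $\Ph$, we have $h_x(t)\ge 0$ for all $t\ge t_0$. Note that $h_x$ may touch zero when grazing occurs, but it never changes sign. Consequently $h_y(t)>0$ strictly for all $t\ge t_0$. So the NH trajectory of $\hat{y}$ never reaches the diagonal, and in particular $h_y$ never vanishes or changes sign. By the reset rule \eqref{eq:mhr3}, no reset is triggered, and therefore $\hat{y}$ coincides with this NH solution for all $t\ge t_0$ and has no resets. This also handles the grazing subtlety: even if $\hat{x}$ grazes the diagonal, $\hat{y}$ stays strictly above it.

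The only delicate point, and the expected obstacle, is making precise that "no resets for $\hat{x}$" implies $h_x\ge0$ on $[t_0,\infty)$. The argument runs as follows. Initially $h_x(t_0)=\theta_0-v_0>0$. If $h_x$ became negative at some time, then by continuity there would be a last zero before that time at which $h_x$ changes sign. That sign change would trigger a reset, contradicting the hypothesis.
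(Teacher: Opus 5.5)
Your proof is correct and follows the paper's argument. The paper invokes its Monotonicity Lemma, whose part (2) in the equal-voltage case is exactly your identity $\theta_y(t)-\theta_x(t)=\ue^{(t_0-t)/\tau}(\theta_1-\theta_0)$, to get the chain $\theta_y(t)>\theta_x(t)\ge v_x(t)=v_y(t)$, and then concludes that $\hat{y}$ coincides with the non-hybrid orbit and has no resets. Your extra justification that having no resets forces $\theta_x(t)\ge v_x(t)$ is a step the paper simply takes for granted.
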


Let us begin with the following simple observation, 
which we shall later use in the proof 
of Proposition~\ref{prop:numberofspikes}.

\begin{lemma}[Monotonicity Lemma]\label{lem:genineq}
Assume that ${x}(t;t_0,v_0,\theta_0)=(v_x(t),\theta_x(t))$ 
is an orbit of the non-hybrid NH system
\eqref{eq:mhr1}-\eqref{eq:mhr2}, 
starting at time $t_0$ from the initial point
$(v_0,\theta_0)$ and ${y}(t;t_0,v_1,\theta_1)=(v_y(t),\theta_y(t))$ 
is another orbit. Then
\begin{enumerate}
    \item if  $v_1>v_0$ ($v_1=v_0$) then $v_y(t)>v_x(t)$
    ($v_y(t)=v_x(t)$) for $t\ge t_0$,
    \item if $v_1\ge v_0$ and $\theta_1>\theta_0$ ($\theta_1\ge\theta_0$)
    then $\theta_y(t)>\theta_x(t)$ ($\theta_y(t)\ge \theta_x(t)$)
    for $t\ge t_0$.
\end{enumerate}
\end{lemma}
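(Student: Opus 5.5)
We will argue directly from the explicit solution formulas of the NH system, treating the two coordinates in turn, exactly as in the proofs of Proposition~\ref{prop:attract} and the Barrier Lemma.

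\emph{Part (1).} From the general formula
\[
v(t)=v_\mathrm{rest}+(v_0-v_\mathrm{rest})\ue^{t_0-t}+\ue^{-t}\int_{t_0}^t I(s)\ue^{s}\,ds,
\]
the forcing integral is common to both orbits, since they start at the same time $t_0$. Hence we get $v_y(t)-v_x(t)=(v_1-v_0)\ue^{t_0-t}$. This quantity is positive (respectively zero) for all $t\ge t_0$ exactly when $v_1>v_0$ (respectively $v_1=v_0$). No estimate is needed.

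\emph{Part (2).} We will use
\[
\theta(t)=\ue^{(t_0-t)/\tau}\theta_0+\ue^{-t/\tau}\frac1\tau\int_{t_0}^t f\big(v(s)\big)\ue^{s/\tau}\,ds
\]
and subtract the two expressions, which gives
\[
\theta_y(t)-\theta_x(t)=\ue^{(t_0-t)/\tau}(\theta_1-\theta_0)+\ue^{-t/\tau}\frac1\tau\int_{t_0}^t\big(f(v_y(s))-f(v_x(s))\big)\ue^{s/\tau}\,ds.
\]
Since $v_1\ge v_0$, Part (1) yields $v_y(s)\ge v_x(s)$ for all $s\ge t_0$. Because $f$ is increasing (indeed $f'(v)=b\,\ue^{b(v-c)}>0$), the integrand is nonnegative, so the integral term is $\ge0$. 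The first term is strictly positive if $\theta_1>\theta_0$ and nonnegative if $\theta_1\ge\theta_0$. Combining the two terms gives the strict and non-strict conclusions respectively, for every $t\ge t_0$, including $t=t_0$.

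There is no genuine obstacle here. The only points to keep straight are the following:
\begin{itemize}
\item Both orbits must share the same initial time $t_0$, so that the $I$-integrals and the exponential weights cancel.
\item The argument uses only the integrated (Carath\'eodory) form of the solutions, so it remains valid for discontinuous pulse inputs $I$.
\end{itemize}
Alternatively, one could invoke a comparison theorem for the cooperative system, since $\partial F_2/\partial v=f'(v)/\tau>0$. The explicit formulas are shorter, however, and match the style of the paper.
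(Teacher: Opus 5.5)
Your proof is correct and matches the paper's argument: the explicit formula gives $v_y(t)-v_x(t)=\ue^{t_0-t}(v_1-v_0)$, and subtracting the $\theta$-formulas gives a strictly positive (resp.\ nonnegative) initial-data term plus an integral that is nonnegative by Part (1) and the monotonicity of $f$. The only difference is cosmetic: the paper factors $\ue^{(t_0-t)/\tau}$ out of both terms.
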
  

\begin{proof}\emph{Ad (1).} From the general formula for the solution
(see Subsection~\ref{appsub:general})
\[
v_y(t)-v_x(t)=
\ue^{t_0-t}\big(v_1-v_0\big)
\quad\text{for $t\ge t_0$},
\]
which proves (1).\vspace{1mm}

\noindent\emph{Ad (2).} 
From the assumption $v_1\ge v_0$, the previous point (1) 
and the monotonicity of $f$, we obtain that
$f(v_y(s))-f(v_x(s))\ge0$ for $s\ge t_0$.
Now again from the general formula for the solutions
\[
\theta_y(t)-\theta_x(t)=
\ue^{(t_0-t)/\tau}
\Big((\theta_1-\theta_0)+\frac1\tau\int_{t_0}^t\!\! 
\big(f(v_y(s))-f(v_x(s))\big)\ue^{(s-t_0)/\tau}\,ds\Big)
\quad\text{for $t\ge t_0$},
\]  
which proves our claim.
\end{proof}

\begin{proof}[Proof of Proposition~\ref{prop:numberofspikes}]
Assume that $\hat{x}(t)=\hat{x}(t;t_0,v_0,\theta_0)=(\hat{v}_{x}(t;t_0,v_0,\theta_0), \hat{\theta}_{x}(t;t_0,v_0,\theta_0))$ has no resets, i.e.,
$\hat{x}(t)=x(t)=(v_x(t),\theta_x(t))$ 
(the hybrid and non-hybrid solutions with the same
initial conditions coincide).
Let $y(t)=(v_y(t),\theta_y(t))$ be the non-hybrid orbit
with initial point $y(t_0)=(v_0,\theta_1)$.
Since $\hat{x}(t)$ has no resets, 
by Monotonicity Lemma~\ref{lem:genineq}, 
\begin{equation}\label{eq:gwiazdka}
{\theta}_{y}(t)>{\theta}_{x}(t)\ge {v}_{x}(t)={v}_{y}(t)
\end{equation} 
and, in consequence, $\hat{y}(t)=y(t)$ also has no resets. 
\end{proof}

Finally, let us formulate, on the basis of numerical simulations, 
a conjecture, which generalizes Proposition~\ref{prop:numberofspikes}
and, roughly speaking, states that higher initial threshold
values correspond to equal or smaller spike counts.
In other words, we expect some monotonicity of the number of spikes 
fired with respect to the initial value of threshold variable $\theta$.
However, for now it is unclear for us whether this new formulation 
of the conjecture may require some additional assumptions such as
(i) $0\leq v_0<v_R+A$ and/or (ii) $f(v)>v$ (plot $f$ above diagonal).

\begin{conjecture}\label{conj:numberofspikes}
Let $n=0,1,2,\dotsc,\infty$.
Under the assumptions of Proposition~\ref{prop:numberofspikes},
if $\hat{x}(t)$ has exactly $n$ resets for $t\ge t_0$ then $\hat{y}(t)$
has at most $n$ resets for $t\ge t_0$,
i.e., for fixed $v_0$ and $t_0$ the number of spikes is a non-increasing
function of the initial value of $\theta$.
\end{conjecture}

\section{Discussion and conclusions}
\label{sec:discussion}

In this work, we studied a two-dimensional hybrid non-autonomous
neuron model with a voltage-dependent threshold, periodic external forcing,
and a discontinuous spike-and-reset mechanism. The model belongs to the
class introduced by Meng, Huguet, and Rinzel to investigate type~III
excitability, slope sensitivity, and coincidence detection
\cite{MHR2012}. Despite the relatively simple triangular structure of the
continuous subthreshold equations, the reset rule produces a substantially
richer range of dynamic responses. Below, we place the analytical and
numerical results obtained above within a common dynamical framework, while
carefully distinguishing rigorous conclusions, numerical observations, and
open conjectures.

The non-hybrid system provides the natural reference dynamics for the
hybrid MHR model. Its voltage component satisfies a scalar linear
non-autonomous equation with a unique attracting periodic solution. Once
this voltage response is substituted into the threshold equation, the
latter likewise admits a unique attracting periodic solution. Consequently,
the full non-hybrid system has a unique \(T\)-periodic orbit 
attracting every other solution. Thus, the continuous system 
asymptotically loses memory of both its initial state 
and its initial time, with all solutions synchronizing
to the same periodically forced response.

The reset mechanism changes this picture fundamentally. Rather than merely
superimposing spikes on the periodic dynamics of the non-hybrid system, it
acts as a selection mechanism determining whether the periodic orbit of the
continuous system remains admissible in the hybrid model. If the non-hybrid
periodic orbit remains entirely in the subthreshold region and therefore
triggers no reset, it becomes the unique reset-free periodic orbit of the
MHR system. If, instead, it crosses the threshold line and activates the
reset condition, it ceases to be an admissible reset-free solution, and all
trajectories must undergo infinitely many resets. Hence, the distinction
between reset-free and reset-including dynamics is directly determined by
the position of the unique non-hybrid periodic orbit relative to the
threshold boundary.

The creation of new recurrent responses by the reset rule is consistent
with the general behavior of hybrid integrate-and-fire models. In such
systems, continuous subthreshold evolution and discrete resets may combine
to produce spike patterns and bifurcation structures absent from the
underlying continuous flow 
\cite{TouboulBrette2009,CoombesThulWedgwood2012}. The present model 
provides a particularly transparent example: the non-hybrid dynamics 
has a unique global periodic attractor, 
whereas the hybrid system may support
reset-including periodic attractors and coexisting asymptotic responses.

Periodic solutions of the MHR system fall into two classes. A reset-free
periodic orbit evolves entirely according to the continuous equations. An
infinite-reset-including periodic orbit undergoes one or more resets during
each periodic cycle and therefore infinitely many resets over the full time
interval. Periodicity rules out a periodic orbit with only finitely many
resets. Our analytical results show that 
there can be at most one reset-free
periodic orbit. Moreover, the existence of 
any trajectory with only finitely
many resets is equivalent to the existence of this reset-free periodic
orbit, and every finite-reset trajectory converges to it.

A second important contribution is the construction of positively invariant
globally attracting regions (PIGARs). These regions localize all
asymptotically relevant dynamics within compact subsets of phase space and
provide a geometric framework for studying periodic orbits, reset
itineraries, and basins of attraction. The four possible geometries are
determined by the relative positions of the basic attracting rectangle
$P_{\mathrm{basic}}$, the threshold line $v=\theta$, 
and the post-spike reset
height. In particular, the quantity
$$
k=\Delta-f(v_{\mathrm{rest}}+A)+v_{\mathrm{rest}}+A
$$
determines the vertical extension needed to ensure that every reset maps a
trajectory back into the corresponding invariant polygon. The PIGAR
construction therefore gives a direct geometric interpretation of how the
input amplitude and the threshold increment $\Delta$ constrain the region
in which the long-term hybrid dynamics can occur.

The existence of a uniform positive lower bound on all interspike intervals is
also important from both mathematical and modeling perspectives. It
precludes the accumulation of infinitely many reset events in finite time
and hence rules out Zeno-type behavior in the MHR system. Consequently,
every trajectory undergoes only finitely many resets on any bounded time
interval. In particular, there is a uniform upper bound on the number of
spikes that any solution can generate during one forcing period. This makes
it possible, for the fixed initial forcing phase $t_0=0$, to partition the
phase space into finitely many sets
$$
\mathbb{P}=\bigcup_{j=0}^{n_{\max}}S_j, \qquad
S_i\cap S_j=\varnothing
\quad\text{for } i\neq j,
$$
where $S_j$ consists of initial conditions producing 
exactly $j$ spikes over
$[0,T]$. This finite spike-count partition may be treated
as a first step toward a symbolic
or combinatorial description of the hybrid stroboscopic dynamics.

The initial threshold value also appears to organize the transient spiking
response. The numerical classification shown 
in Figure~\ref{fig:spikes} suggests that,
for fixed initial voltage and forcing phase, 
increasing the initial threshold
does not increase the total number of spikes. Proposition
\ref{prop:numberofspikes} proves this statement rigorously 
in the zero-spike
case: if the trajectory starting from $(v_0,\theta_0)$ remains reset-free,
then any trajectory starting from the same voltage and a larger threshold
also remains reset-free. 

The analogous statement for trajectories undergoing one or more resets
remains open. The main difficulty is that 
the simple order relation preserved
by the non-hybrid flow need not survive the first reset. Although two
trajectories may initially have the same voltage and ordered threshold
values, they may reach the threshold line at different times, after which
their reset histories and forcing phases are no longer directly comparable.
Conjecture \ref{conj:numberofspikes} nevertheless proposes that the total
number of resets is a non-increasing function of the initial threshold. 

We next complement these analytical results with numerical observations on
reset-including periodic attractors. The simulations indicate that such
attractors occur repeatedly across different parameter values and are not
confined to an isolated numerical example. Their existence and stability,
however, have not yet been established analytically. The statements below
should therefore be interpreted as numerical observations rather than as a
rigorous classification.

\begin{figure}[htbp]
    \includegraphics[scale =0.44,
    trim= 0mm 0mm 0mm 0mm]{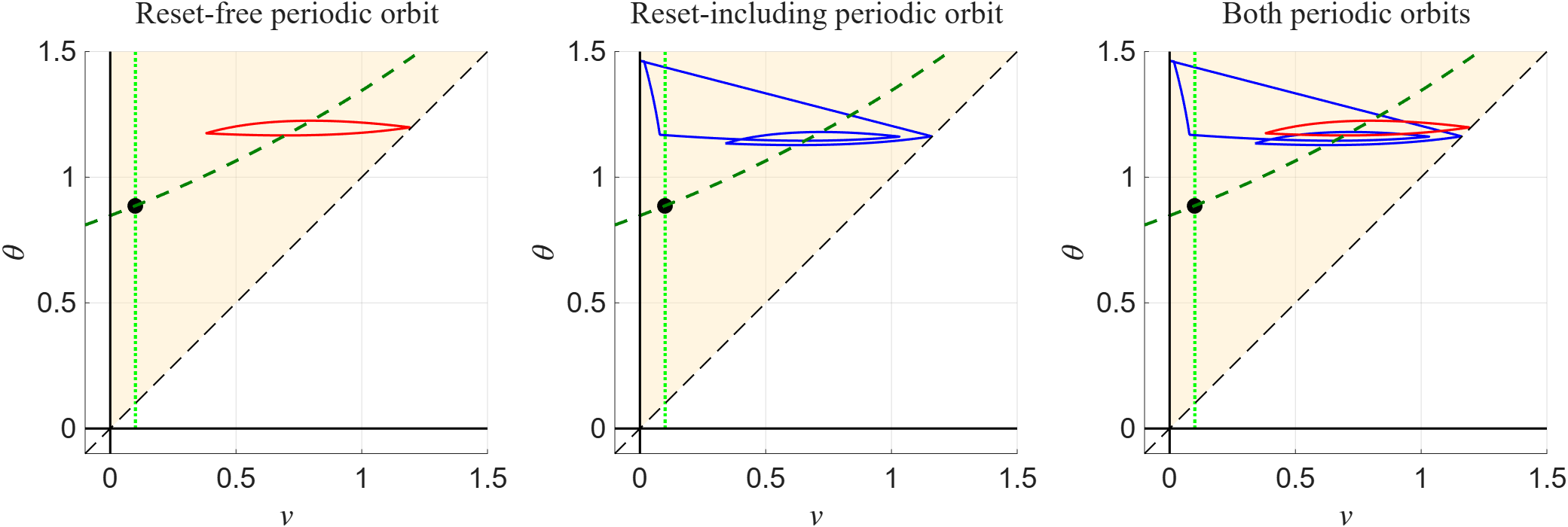}
    \caption{Bistability in the MHR model (pulse wave). Two periodic attractors for different initial conditions:
    reset-free $T$-periodic attractor (left),
    reset-including $2T$-periodic attractor (middle),
    their relative positions in phase space (right).
    Parameter values: $v_\mathrm{rest} = 0.1; 
    A = 2; T = 2; d = 0.32; \tau = 2;
    a = 0.08; b = 0.5; c = 0.53; 
    \Delta = 0.3$.
    }
    \label{fig:bistability}
\end{figure}

\begin{figure}[htbp]
    \includegraphics[scale =0.43,
    trim= 0mm 0mm 0mm 0mm]{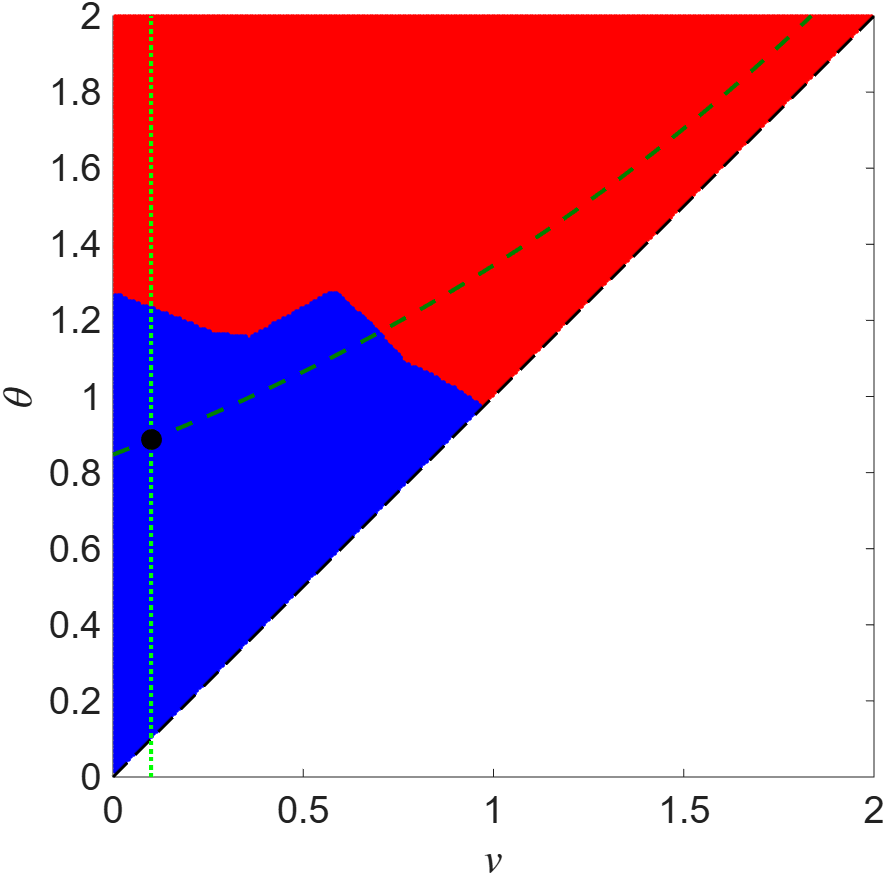}
    \caption{Basins of attraction
    in the phase space
    for bistability in the MHR system.
    Initial conditions are chosen
    for $t_0=0$. Orbits starting
    from blue (red) area tend to the
    reset-including (reset-free)
    periodic attractor and have infinite
    (finite) number of resets
    for $t\in[0,\infty)$.
    Parameter values as in 
    Figure~\ref{fig:bistability}. 
    The visible part of the phase space
    ($0\le v\le\theta\le2$) was discretized into a $300\times300$ grid
    of initial conditions with numerical computations restricted to this
    triangular region. 
    }
    \label{fig:basins}
\end{figure}

Figures \ref{fig:bistability} and \ref{fig:basins} 
illustrate the first form
of bistability detected in the MHR system. For the same parameter values, a
reset-free $T$-periodic attractor coexists with a reset-including
$2T$-periodic attractor. Initial conditions in one basin converge to a
non-spiking subthreshold response, whereas those in the other converge to a
persistent spiking response. Thus, the threshold-reset mechanism 
can produce qualitatively different asymptotic regimes 
without any change in the external forcing or model parameters.

This coexistence can be interpreted in terms of an effective refractory
memory encoded by the threshold variable. Dynamic spike thresholds have
been observed experimentally and related to neuronal sensitivity to the
temporal structure and rate of depolarization of the input
\cite{AzouzGray2000,AzouzGray2003}. Adaptive thresholds have also been
incorporated into reduced neuron models to reproduce a broad range of
spiking responses \cite{KobayashiEtAl2009}. 
In the present model, each spike
resets the voltage to zero and increases the threshold by $\Delta$. The
threshold variable therefore retains the effect of previous firing events,
while the current values of $(v,\theta,t \bmod T)$ are sufficient 
to determine the system’s future evolution.

The basin plot in Figure \ref{fig:basins} 
should be interpreted as a section
of the basins at the fixed forcing phase $t_0=0$. Because the model is
non-autonomous and periodically forced, changing the initial phase of the
input may deform the corresponding basin sections. The numerically observed
basin boundary is nontrivial and appears to be influenced by the threshold
line and changes in the number of resets. Identifying the invariant
structure organizing this boundary, possibly an unstable periodic orbit or
another unstable invariant set, is an interesting problem for future study.

\begin{figure}[htbp]
    \includegraphics[scale =0.5,
    trim= 0mm 0mm 0mm 0mm]{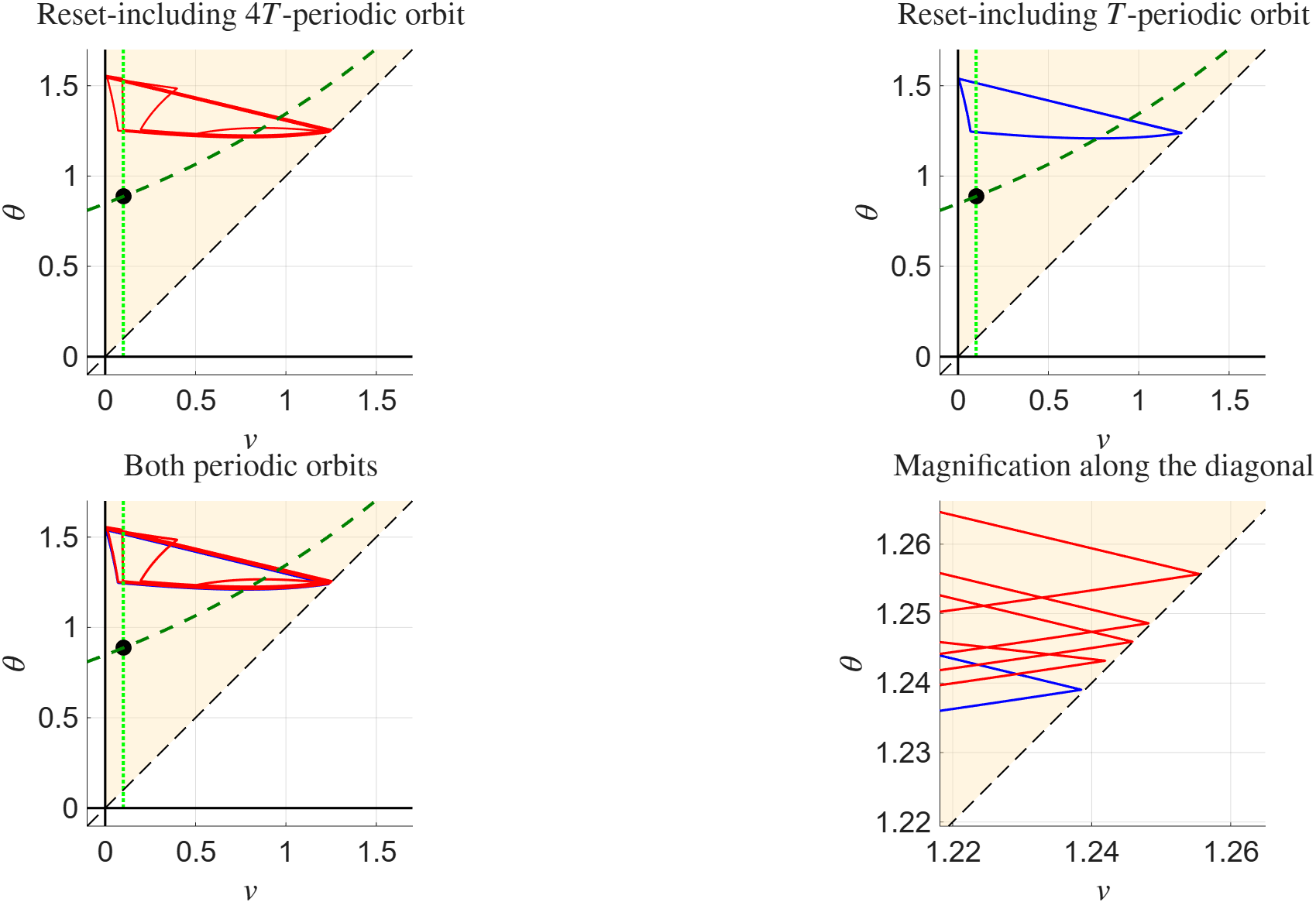}
    \caption{Another type of bistability in the MHR model 
    (pulse wave) with two reset-including periodic attractors. 
    Clockwise from upper left corner:
    reset-including $4T$-periodic attractor with
    $3$ reset points on the diagonal,
    reset-including $T$-periodic attractor
    with $1$ reset point,
    magnification along the diagonal showing 
    the relative positions of two
    attractors in the phase space.
    Parameter values: $v_\mathrm{rest} = 0.1; 
    A = 2; T = 2; d = 0.43; \tau = 2;
    a = 0.08; b = 0.5; c = 0.53; 
    \Delta = 0.3$.
    }
    \label{fig:bistability2}
\end{figure}

\begin{figure}[htbp]
    \includegraphics[scale =0.43,
    trim= 0mm 0mm 0mm 0mm]{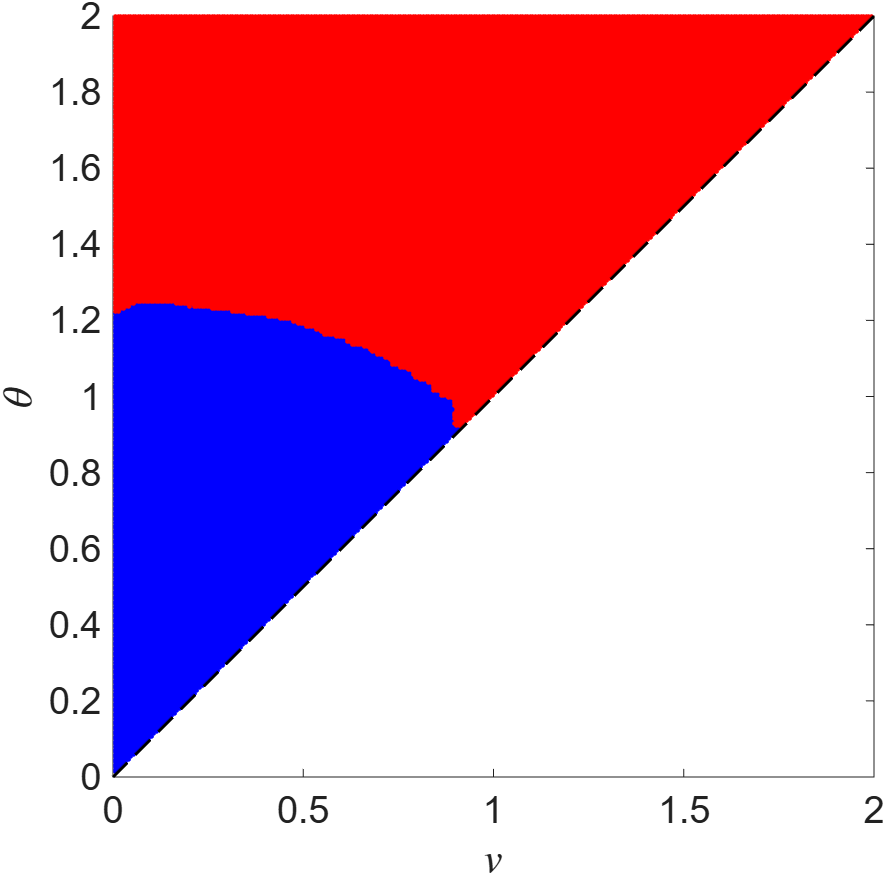}
    \caption{Basins of attraction
    for second type of bistability 
    in the MHR system.
    Initial conditions are chosen
    for $t_0=0$. Orbits starting
    from blue (red) area tend to the
    reset-including $T$-periodic 
    ($4T$-periodic) attractor.
    Parameter values as in 
    Figure~\ref{fig:bistability2}. 
    The dynamics was computed for $300\times300$
    grid of initial conditions, with computations 
    performed only for 
    points above the diagonal with $v,\theta\in[0,2]$. 
    }
    \label{fig:basins2}
\end{figure}

A second form of bistability is shown in Figures \ref{fig:bistability2} and
\ref{fig:basins2}. Here, both attracting solutions are reset-including but
have different temporal organizations. One attractor is $T$-periodic and
undergoes one reset per forcing period, 
whereas the other has minimal period
$4T$ and undergoes three resets over its periodic cycle. Thus, the same
external stimulus can support two distinct stable firing rhythms. Their
basins separate initial states leading to different 
long-term spike patterns,
rather than spiking from non-spiking behavior.

The coexistence of two reset-including attractors shows that the reset rule
can generate more than one recurrent itinerary through phase space. Each
itinerary is determined by successive intervals of continuous evolution,
threshold crossings, and post-spike jumps. In a periodically forced hybrid
system, different itineraries may close after different numbers of forcing
periods, producing attracting solutions with different minimal periods and
firing ratios. The coexistence observed here is consistent with the general
capacity of two-dimensional hybrid integrate-and-fire models to support
distinct recurrent spike patterns generated by different reset itineraries
\cite{TouboulBrette2009,CoombesThulWedgwood2012}. In the present system,
however, the reset-free/reset-including and reset-including/reset-including
forms of bistability arise from the specific interaction among periodic
forcing, a voltage-dependent threshold, and the spike-triggered threshold
increment.

Precise terminology is important when interpreting these numerical results.
For the parameter sets represented in Figures
\ref{fig:bistability}-\ref{fig:basins2}, the computations reveal two
coexisting \emph{stable periodic attractors}. They do not establish that
these are the only periodic orbits of the system. In particular, the
simulations do not exclude unstable periodic orbits, additional attractors
with very small basins, or other invariant sets. 
Thus, the numerical results
establish bistability between the two detected attractors, 
but not the exact number of periodic orbits in the system.

\begin{figure}[htbp]
    \includegraphics[scale =0.44,
    trim= 0mm 0mm 0mm 0mm]{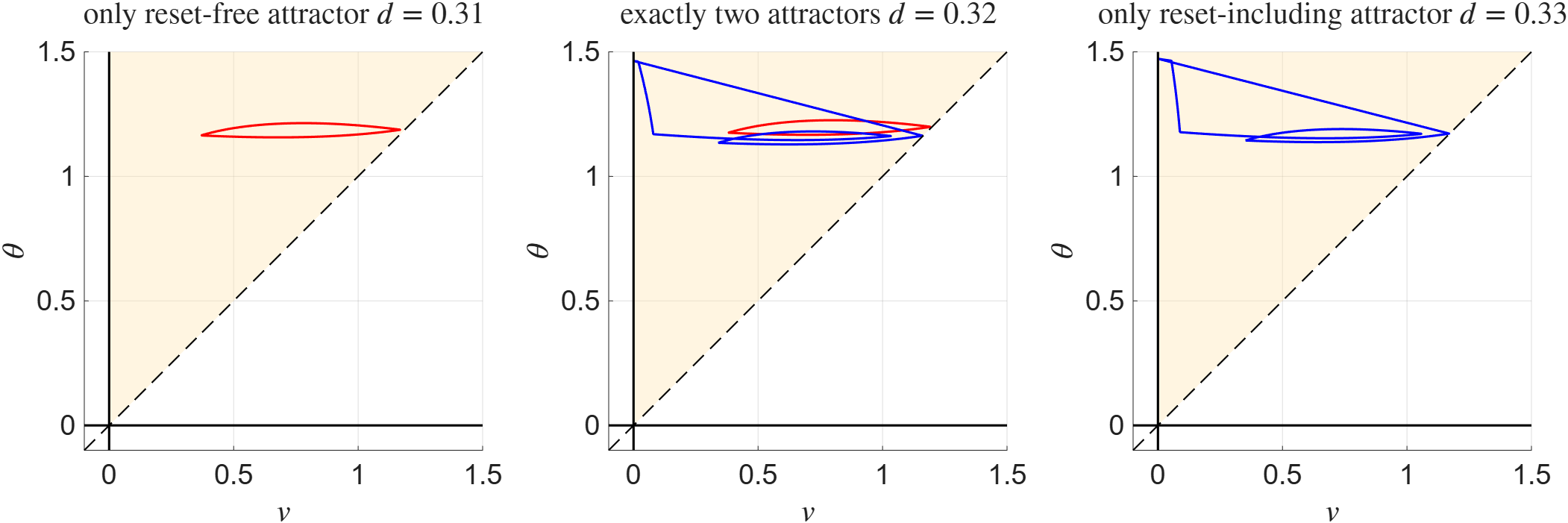}
    \caption{Numerically observed transition between periodic attracting
    regimes in the MHR model (pulse wave) with respect to $d$.
    Left panel: for $d=0.31$, the numerical simulations detect 
    a reset-free $T$-periodic attractor.
    Middle panel: for $d=0.32$, the numerical simulations reveal 
    the coexistence of two stable periodic attractors, 
    one reset-free and the other reset-including. 
    Right panel: for $d=0.33$, the numerical simulations detect 
    a reset-including $2T$-periodic attractor.
    Other common parameter values: 
    $v_\mathrm{rest} = 0.1; 
    A = 2; T = 2; \tau = 2;
    a = 0.08; b = 0.5; c = 0.53; 
    \Delta = 0.3$.
    }
    \label{fig:bifurcation}
\end{figure}

Figures \ref{fig:bifurcation} and \ref{fig:patterns} 
show how the attracting
periodic response changes as the duty cycle $d$ 
of the pulse input is varied.
For $d=0.31$, the observed attractor is a reset-free $T$-periodic solution.
At $d=0.32$, a reset-free periodic attractor coexists 
with a reset-including $2T$-periodic attractor. 
For $d=0.33$, the numerical trajectories converge
to the reset-including $2T$-periodic attractor. 
These computations therefore
indicate a transition from reset-free to reset-including dynamics through
an intermediate bistable regime.

At this stage, we refer to this phenomenon as a numerically observed
transition rather than a fully identified bifurcation. A rigorous
bifurcation description would require continuation of the periodic
solutions, stability information, and analysis of the event at which an
orbit first touches or crosses the threshold line. In related periodically
forced integrate-and-fire systems, such transitions have been associated
with smooth and non-smooth grazing, gluing, and border-collision mechanisms
\cite{GH2019,GKC2014,CoombesThulWedgwood2012}. Similar non-smooth mechanisms 
are natural candidates for the present model.

To describe the different locked firing patterns, suppose that a periodic
solution has minimal period $qT$ and undergoes $p$ resets 
over this interval.
We define its firing ratio by
$$
\rho=\frac{p}{q}.
$$
This ratio is the average number of spikes per forcing period along the
periodic orbit. In the numerical parameter sweep shown in Figure
\ref{fig:patterns}, we observe the sequence
$$
\rho=\frac{0}{1},\qquad
\frac{1}{2},\qquad
\frac{2}{3},\qquad
\frac{3}{4},\qquad
\frac{1}{1}.
$$
The intermediate ratios follow
$$
\rho=\frac{k}{k+1},\qquad k=0,1,2,3,
$$
before the transition to one-to-one locking. The ratio $1/1$ is not the
next term of the sequence $k/(k+1)$; rather, it represents the final
one-to-one locked regime observed in this parameter sweep.

\begin{figure}[htbp]
    \includegraphics[scale =0.408,
    trim= 2mm 0mm 0mm 0mm]{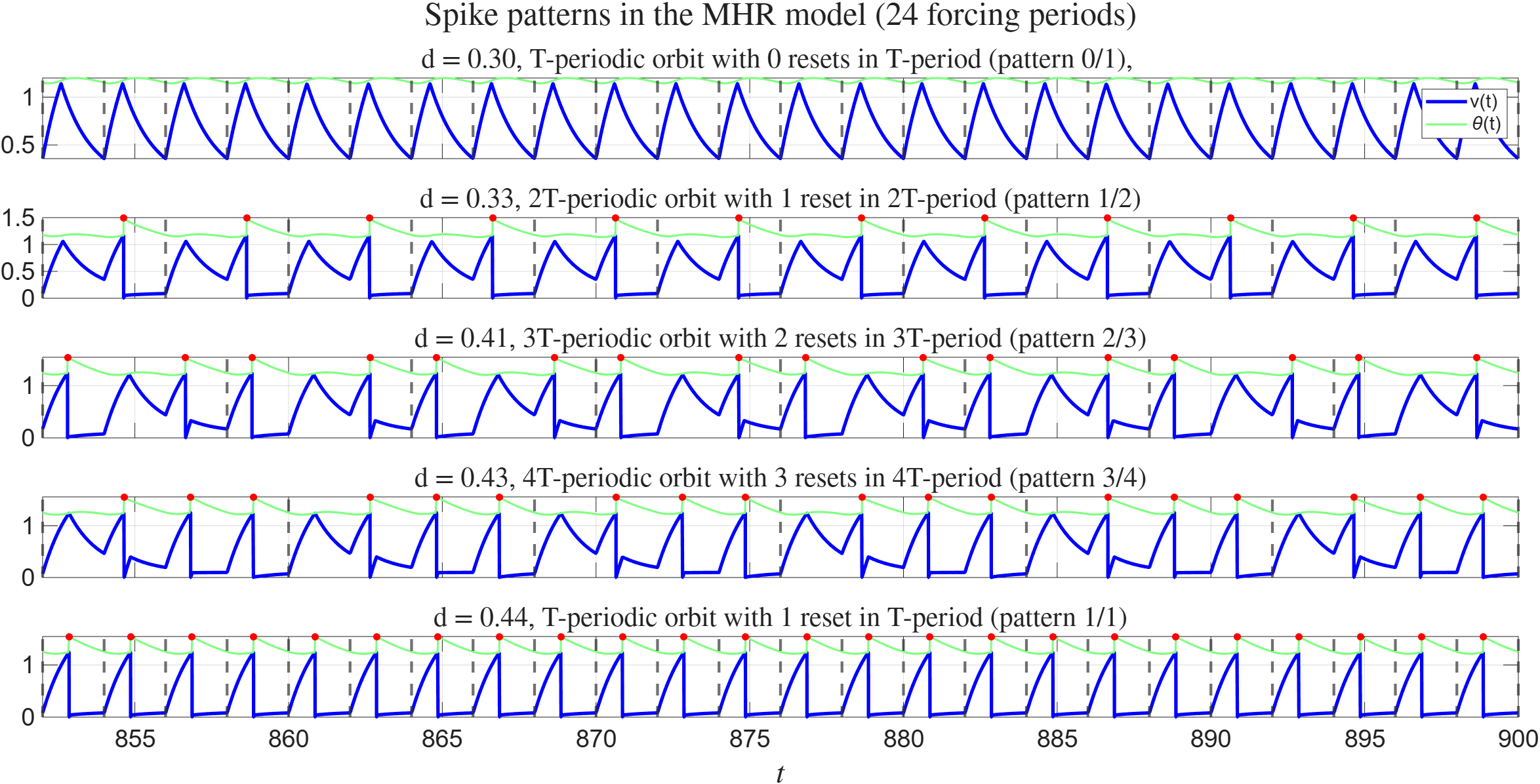}
    \caption{Five different spike patterns
    of periodic orbits in the MHR system
    with respect to the parameter $d$.
    Other common parameter values
    as in Figure~\ref{fig:bifurcation}.
    }
    \label{fig:patterns}
\end{figure}

This organization is reminiscent of mode locking in periodically forced
integrate-and-fire models \cite{Keener1981,Coombes1999,COS2001}. 
It also suggests a period-incrementing or 
period-adding-like structure in the hybrid
stroboscopic map, similar to those described for other periodically driven
spiking systems \cite{GKC2014,GK2015,GAK2017}. 
Nevertheless, the numerical sequence
reported here is insufficient to establish a complete Farey organization or
a rigorous rotation-number theory. Such conclusions would require a broader
parameter study and a precise analysis of the stroboscopic map.

In the MHR system, the time-$T$ map is expected to be smooth on regions of
initial conditions sharing the same number and order of resets, with smooth
branches corresponding to fixed reset itineraries. Across the boundaries of
these regions, the reset itinerary changes, and the stroboscopic map may
become discontinuous or only piecewise smooth. 
Transitions between different
firing ratios may therefore be mediated by grazing 
or border-collision events
\cite{GH2019,GKC2014}. A geometric analysis of these transitions, together with
the possible construction of an appropriate rotation number or symbolic
firing sequence, will be pursued in subsequent work.

The role of the duty cycle can be understood as a competition between
excitation and refractoriness. Increasing $d$ lengthens the active phase of
the external current and thereby tends to promote threshold crossings. Each
crossing, however, resets the voltage to zero and raises the threshold by
$\Delta$, delaying the next possible spike. Thus, increasing $d$ need not
produce a simple monotone increase in spike count on every time scale.
Instead, stable firing patterns arise from a balance between the excitatory
action of the input and the refractory effect of the threshold increment.
This mechanism naturally allows one spike per period, one spike every two
periods, or several spikes distributed over a longer periodic cycle.

For each pulse-forced parameter set explored numerically in this work, the
computed long-time dynamics exhibited either one or two stable periodic
attractors. We found no parameter values with more than two stable periodic
attractors. When two attractors coexisted, 
either one was reset-free and the
other reset-including, or both were reset-including. In the latter cases
observed so far, the two attracting orbits had different 
minimal periods and firing ratios.

These observations motivate the following restricted conjectural picture:
for the pulse-forced MHR system, at least within the parameter regime
investigated here, the stroboscopic dynamics admits at most two stable
periodic attractors. If two stable attractors coexist, at most one can be
reset-free, as follows from the uniqueness of the periodic orbit of the
non-hybrid system. When two reset-including attractors coexist, our
simulations suggest that they have different minimal periods and firing
ratios. These statements should not yet be interpreted as global assertions
for all admissible parameters or periodic inputs.

Recall that for any constant value $A$ of the input $I(t)=A$, 
the non-hybrid MHR model \eqref{eq:mhr1}-\eqref{eq:mhr2} 
has a unique stable equilibrium point. 
This equilibrium point is subthreshold 
if $f(v_\mathrm{rest}+A)>v_\mathrm{rest}+A$. 
Therefore, with an additional assumption that $f(v)>v$ 
for all $v\geq 0$, the model was introduced in \cite{MHR2012} 
as an idealized model of the Type III excitability. 
Note that in our work we assumed that 
$f(v_\mathrm{rest})>v_\mathrm{rest}$ (i.e. the stable equilibrium 
point is located above the diagonal for $A=0$) but in general 
we do not assume that $f(v)>v$ or even that
$f(v_\mathrm{rest}+A)>v_\mathrm{rest}+A$, 
therefore our results persist beyond Type III excitability. 
Nevertheless, in particular in parameters regime used 
in Figures~\ref{fig:bistability}-\ref{fig:basins2}, 
the $\theta$-nullcline $\theta=f(v)$ remains above the diagonal 
$\theta=v$ for all values of $v$ which means the system has 
a stable subthreshold equilibrium point for all 
constant positive values of the input $I$ and therefore 
can be referred to as Type III model. For these parameters values 
the asymptotic behavior of the model (and thus the spike pattern fired)
for the same periodic stimulus $I=I(t)$ depends strongly on the initial
conditions as follows from various types of bistability illustrated in 
Figures \ref{fig:bistability} and \ref{fig:bistability2}. In this case,
a small noise or instantaneous voltage kick of minimal size applied to 
a deterministic trajectory can cause switching between different firing
modes. On the other hand, the model displays strong selectivity with
respect to the input parameters. As visualized in Figures
\ref{fig:bifurcation} and \ref{fig:patterns}, a sort of phase locking
occurs with a specific mode-locking pattern strongly tied to the duration
of the non-zero input in periodic (pulse-wave) stimulus. These properties
are more likely to occur for Type III neurons than in other excitability
types (\cite{MHR2012}).

We also note that the multistability phenomena and coexistence 
of periodic attractors of different periods was reported in~\cite{GH2019}
for similar parameters choice as in 
Figures~\ref{fig:bistability}-\ref{fig:basins2} 
(which also follows parameter regimes studied in
\cite{MHR2012}). The authors analyze Farey neighbors and 
illustrate period-adding phenomena using advanced numerical 
techniques but rigorous results concerning structure and transitions 
of periodic attractors still remain a challenge for future work.

Therefore, the rigorous existence of reset-including periodic orbits 
is one of the main open problems raised by this work. 
A possible strategy is to construct a
hybrid Poincar\'e or stroboscopic map on 
a compact invariant region in which
the number and order of resets are fixed. Under suitable transversality
assumptions, the reset times should depend continuously, and possibly
smoothly, on the initial condition. One could then seek to establish the
existence and stability of a periodic orbit through a fixed-point argument,
a contraction estimate, or an appropriate topological method. The PIGARs
constructed here provide natural compact domains on which 
such maps could be studied.

A systematic continuation analysis is also needed to determine how
reset-including periodic attractors are created and destroyed as the duty
cycle $d$, amplitude $A$, forcing period $T$, or reset height $\Delta$ are
varied. Such an analysis should distinguish smooth bifurcations occurring
within a region of fixed reset itinerary 
from non-smooth bifurcations arising
when an orbit becomes tangent to the threshold line or changes 
its number of resets. It would also clarify whether 
the observed sequence of firing ratios belongs to 
a broader period-adding or period-incrementing structure \cite{GAK2017}.

Further investigation of the basins of attraction would complement this
analysis. In particular, it would be useful to determine the regularity and
geometry of their boundaries, their dependence 
on the initial forcing phase,
and their sensitivity to model parameters. From a neuronal perspective,
these basin boundaries quantify how small changes in the initial voltage or
threshold may switch the long-term response between non-spiking and
persistent-spiking regimes, or between distinct spiking rhythms.

In summary, the model studied here demonstrates how a simple
two-dimensional periodically forced system can acquire rich dynamics when
equipped with a dynamic threshold and a reset mechanism. The non-hybrid
system has a globally simple structure with a unique attracting periodic
response. By contrast, the hybrid system 
may exhibit transient or persistent
spiking, coexistence of stable periodic responses, nontrivial basins of
attraction, and locked firing patterns with different minimal periods and
firing ratios. The analytical results provide a rigorous framework for
localizing the dynamics and controlling reset events, whereas the numerical
observations reveal phenomena genuinely induced by the hybrid structure.
Together, these results show that dynamic thresholds 
and reset mechanisms can generate a complex organization of 
firing patterns even in low-dimensional neuron models.

\appendix

\section{Direct formulas for solutions}\label{app:A}
Since the equations under consideration 
disregarding the reset effect are quite simple in nature 
(linear with respect to main variables), 
we are able to give direct formulas for their solutions
by first solving the first equation, which does not depend on 
$\theta$, and then solving the second one with the given solution 
of the first one inserted into the second.
Knowledge of explicit forms of solutions enables 
both their estimation and the analysis 
of their asymptotic behavior

\subsection{General case}\label{appsub:general}
Observe that the NH system 
\eqref{eq:mhr1}-\eqref{eq:mhr2} (non-autonomous ODE) 
can be solved by quadratures, i.e.,
expressed in terms of integrals. 
Namely, from the first equation, 
assuming the initial condition is  $v(t_0)=v_0$, we obtain
\begin{equation}\label{eq:v_general}
v(t)=
v(t;t_0,v_0)=
\ue^{t_0-t}\big(v_0+H(t)\big),
\quad\text{where}\quad
H(t)=\int_{t_0}^t\!\!\big(v_\mathrm{rest}+I(s)\big)\ue^{s-t_0}\,ds
\end{equation}
or, equivalently,
\begin{equation}\label{eq:v_general2}
v(t)=v_\mathrm{rest}+
(v_0-v_\mathrm{rest})\ue^{t_0-t}+\ue^{-t}\int_{t_0}^t\!\!\! I(s)\ue^{s}ds.
\end{equation}
Similarly, from the second equation, 
assuming $\theta(t_0)=\theta_0$, we get
\begin{equation}\label{eq:theta_general}
\theta(t)=
\ue^{(t_0-t)/\tau}\big(\theta_0+G(t)\big),
\quad\text{where}\quad
G(t)=\frac1\tau\int_{t_0}^t\!\! f\big(v(s)\big)\ue^{(s-t_0)/\tau}\,ds
\end{equation}
and
\begin{equation}\label{eq:f_general}
f\big(v(s)\big)=
a+\exp(b(v(s)-c))=
a+\exp\left[b\bigg(\ue^{t_0-s}\Big(v_0+\int_{t_0}^s\!\!\big(v_\mathrm{rest}+I(u)\big)
\ue^{u-t_0}\,du\Big)-c\bigg)\right].
\end{equation}
Of course, $v(t)$ and $\theta(t)$ are the solutions of 
the NH system, i.e., the non-autonomous 
ODE \eqref{eq:mhr1}-\eqref{eq:mhr2}
without reset conditions. However, if we take into account the reset, 
we need to modify the initial conditions after the reset, 
but the form of the solutions remains the same.

\subsection{Pulse wave case}
Note that if we take the input $I(t)$ as a pulse wave 
function~\eqref{eq:inputSquare} then
we may treat the NH system as a switched autonomous ODE system,
i.e.,  ODEs and a rule (a switching signal, $I=0$ or $I=A$) 
that determines which of the two equations 
is active at any given moment.
Namely, we are able to compute the solution of the system, 
distinguishing two cases: first, when $t\in(nT, nT+dT]$ 
and second, when $t\in(nT+dT, (n+1)T]$
according to the formula for the pulse wave input.
Namely, the solution $v(t)$ for $t\in[0,\infty)$ 
is given inductively (with respect to $n=0,1,2,\dotsc$ and 
division of the interval $(nT, (n+1)T]$  by point $t=nT+dT$) as
\[
v(0)=v(0T)=v_0 \text{ and }
v(t;0,v_0)=
\begin{cases}
v_\mathrm{rest}+A+(v(nT)-v_\mathrm{rest}-A)\ue^{nT-t} 
& \text{if $t\in(nT, nT+dT]$},\\
v_\mathrm{rest}+(v(nT+dT)-v_\mathrm{rest})\ue^{nT+dT-t} 
& \text{if $t\in(nT+dT, (n+1)T]$}.
\end{cases}
\]
In the above formulas $v(nT)$ (resp. $v(nT+dT)$)
is the initial data for each interval of $(nT, nT+dT]$ 
(resp. $(nT+dT, (n+1)T]$).
Observe that, by definition, the solution $v(t)$ 
is continuous and even piecewise differentiable. 
In particular, it is continuous 
both at $t=nT$ and $t=nT+dT$ (the two formulas agree
on these points). However, as it is easy to check,
$v(t)$ is not differentiable at these points
although it has one-sided derivatives.

Similarly, we obtain the formula for $\theta(t)$ 
starting from $\theta(0)=\theta(0T)=\theta_0$
and taking
{\small
\[
\theta(t)=
\begin{cases}
\theta(nT)\ue^{(nT-t)/\tau}+a(1-\ue^{(nT-t)/\tau})
+\frac{\ue^{(nT-t)/\tau}}{\tau}\ue^{b(v_\mathrm{rest}+A-c)}I_1(t) & 
\!\!\!\text{if $t\!\in\!(nT,nT+dT]$},\\
\theta(nT+dT)\ue^{(nT+dT-t)/\tau}+a(1-\ue^{(nT+dT-t)/\tau})
+\frac{\ue^{(nT+dT-t)/\tau}}{\tau}\ue^{b(v_\mathrm{rest}-c)}I_2(t) & 
\!\!\!\text{if $t\!\in\!(nT+dT,(n+1)T]$,}
\end{cases}
\]}
where
\begin{align*}
I_1(t)=&\int_{nT}^t\!\ue^{b(v(nT)-v_\mathrm{rest}-A)\ue^{nT-s}}
\ue^{(s-nT)/\tau}\, ds,\\
I_2(t)=&\int_{nT+dT}^t\!\ue^{b(v(nT+dT)-v_\mathrm{rest})\ue^{nT+dT-s}}
\ue^{(s-nT-dT)/\tau}\, ds.
\end{align*}

\subsection{Half-wave-rectified wave case}
Since for this case, we can rewrite the input function $I(t)$ as
\[
I(t)=\begin{cases}
  A\sin(2\pi \omega t)  & \text{if $t\in\left(nT,nT+\frac{T}{2}\right]$}, \\
  0 & \text{if $t\in\left(nT+\frac{T}{2},(n+1)T\right]$},
\end{cases}
\]
the NH system may now be treated as a switched ODE system 
consisting of both non-autonomous and autonomous subsystems, 
which, of course, is non-autonomous as a whole.
With this input the solution $v(t)$ is given by
{\small
\[
v(t)=
\begin{cases}
v_\mathrm{rest}+\big(v(nT)-v_\mathrm{rest}+\frac{2\pi\omega A}{1+4\pi^2\omega^2}\big)\ue^{nT-t}
+\frac{A(\sin(2\pi\omega t)-2\pi\omega \cos(2\pi\omega t))}
{1+4\pi^2\omega^2},
&\text{if $t\in(nT, nT+\frac{T}{2}]$}\\
v_\mathrm{rest}+\big(v\big(nT+\frac{T}{2}\big)-
v_\mathrm{rest}\big)\ue^{nT+\frac{T}{2}-t},
&\text{if $t\in(nT+\frac{T}{2}, (n+1)T]$}.
\end{cases}
\]}
Analogously, we can calculate $\theta(t)$ to obtain
{\small
\[
\theta(t)=
\begin{cases}
\theta(nT)e^{-(t-nT)/\tau}+a(1-\ue^{-(t-nT)/\tau})
+\frac{\ue^{-(t-nT)/\tau}}{\tau}\ue^{b(v_\mathrm{rest}-c)}I_3(t), 
&\!\!\!\!t\in(nT, nT+\frac{T}{2}]\\
\theta\!\left(nT+\frac{T}{2}\right)\!\ue^{-(t-nT-T/2)/\tau}
\!+a(1\!-\!\ue^{-(t-nT-T/2)/\tau})
+\frac{\ue^{-(t-nT-T/2)/\tau}}{\tau}\ue^{b(v_\mathrm{rest}-c)}I_4(t),
&\!\!\!\!t\in(nT+\frac{T}{2}, (n+1)T]
\end{cases}
\]}
with
$$
I_3(t)=\int_{nT}^t\ue^{b(v(nT)-v_\mathrm{rest})
\ue^{-(s-nT)}}\ue^{m(s)}\ue^{(s-nT)/\tau}ds,
$$
where
$$
m(s)=\frac{Ab}{1+4\pi^2\omega^2}\left(2\pi\omega 
\ue^{-(s-nT)}+\sin(2\pi\omega s)
-2\pi\omega\cos(2\pi\omega s)\right),
$$
and
$$
I_4(t)=\int_{nT+T/2}^t
\ue^{b(v\left(nT+\frac{T}{2}\right)-v_\mathrm{rest})
\ue^{-\left(s-nT-\frac{T}{2}\right)}}
\ue^{\left(s-nT-\frac{T}{2}\right)/\tau}\ ds.
$$

\section*{CRediT authorship contribution statement}
\textbf{Piotr Bart{\l}omiejczyk:}  Writing – original draft, 
Writing – review \& editing, Methodology, Investigation, Formal analysis,
Conceptualization, Validation, Visualization, Software;
\textbf{Juan Belmonte-Beitia:}  Writing – original draft, 
Writing – review \& editing, Methodology, Investigation, 
Formal analysis, Validation, Funding acquisition;
\textbf{Justyna Signerska-Rynkowska:}  Writing – original draft, 
Writing – review \& editing, Investigation, Formal analysis, 
Conceptualization, Validation, Funding acquisition.

\section*{Declaration of competing interest} 
The authors declare that they have no known competing financial
interests or personal relationships that could have appeared 
to influence the work reported in this paper.

\section*{Acknowledgements}  
 Justyna Signerska-Rynkowska was supported by NCN 
(National Science Centre, Poland) grant no.~2019/35/D/ST1/02253 
and by the Dioscuri program initiated by the Max Planck Society,
jointly managed with the National Science Centre (Poland), 
and mutually funded by the Polish Ministry of Science 
and Higher Education and the German Federal Ministry 
of Research, Technology and Space.

Juan Belmonte-Beitia was partially supported by project
PID2024-155384OB-C21, funded by Ministerio de Ciencia e 
Innovación/Agencia Estatal de Investigación, 
Spain (doi:10.13039\-/501100011033) and European Regional 
Development Fund (ERDF A way of making Europe). 
This research is part of the research project 
SBPLY/23/180225/000041, funded by the European Union through 
the European Regional Development Fund (ERDF) and 
by the Regional Government of Castilla-La Mancha (JCCM) 
through the INNOCAM programme.

\section*{Data availability} 
No data was used for the research described in the article.

\bibliographystyle{elsarticle-num}
\bibliography{referencesBBBSR}

\end{document}